\documentclass[11pt]{amsart}

\usepackage[paper]{preamble}

\newrobustcmd\ind{k}

\newrobustcmd\Ind{I}

\newrobustcmd\link{\lambda}
\newrobustcmd\linkclass{\ell}
\newrobustcmd\Link{L}
\newrobustcmd\LINK{\Lambda}

\begin{document}

\title{Finite index extensions of treeable equivalence relations are treeable}

\author[R.~Chen]{Ruiyuan Chen}
\address[R.~Chen]{
    Faculty of Mathematics, Informatics, and Mechanics,
    University of Warsaw,
    Warsaw, Poland
}
\email{ruiyuan@mimuw.edu.pl}

\author[A.~Tserunyan]{Anush Tserunyan}
\address[A.~Tserunyan]{
    Department of Mathematics and Statistics,
    McGill University,
    Montréal, QC, Canada
}
\email{anush.tserunyan@mcgill.ca}

\author[R.~Tucker-Drob]{Robin Tucker-Drob}
\address[R.~Tucker-Drob]{
    Department of Mathematics,
    University of Florida,
    Gainesville, FL, USA
}
\email{r.tuckerdrob@ufl.edu}

\dedicatory{Dedicated to Alexander Kechris on the occasion of his 80th birthday}

\subjclass[2020]{%
    Primary
    03E15,  % DST
    37A20;  % Orbit equivalence
    Secondary
    06D75,  % Other generalizations of distributive lattices
    20E08,  % Groups acting on trees
    20F65.  % GGT
}

\keywords{%
    treeable,
    countable Borel equivalence relation,
    finite index,
    median graph,
    median algebra.
}

\begin{abstract}
We prove that every countable Borel equivalence relation containing a finite index Borel treeable subequivalence relation is Borel treeable.
This answers a long-standing question of Jackson--Kechris--Louveau.
Our proof is based on a median subalgebra filtration construction on products of trees, together with an extension of the median graph treeing algorithm of Chen--Poulin--Tao--Tserunyan to certain well-behaved increasing unions.
\end{abstract}

\maketitle

\tableofcontents

\section{Introduction}
\label{sec:intro}

In this article we give an affirmative answer to the following question.

\begin{question}\label{question:intro}
Given a finite collection of locally finite trees on the same vertex set, can one canonically combine them into a single tree-like structure, without choosing a preferred tree in the collection?
\end{question}

No geometric compatibility between the trees is assumed: vertices that are adjacent in one tree may be arbitrarily far apart in another.

\cref{question:intro} lies at the heart of the \emph{finite index over treeable} problem for countable Borel equivalence relations.
This problem, first raised by Jackson--Kechris--Louveau \cite[Open~Problem~6.4(B)]{JKL:cber}, asks whether every countable Borel equivalence relation which contains a treeable finite index subequivalence relation is itself treeable.
The problem has remained open even in the index $2$, probability-measure-preserving setting, where a treeing is sought only on an invariant conull set.

Here we resolve this problem positively and in its strongest form by giving a purely Borel construction of ``one tree from many.''

\begin{theorem}[\cref{thm:treeable-finite-index}]
\label{intr:thm:treeable-finite-index}
Let $E \subseteq X^2$ be a countable Borel equivalence relation on a standard Borel space $X$,
and let $F \subseteq E$ be a Borel subequivalence relation.
Suppose $F$ has finite index in $E$ and is Borel treeable.
Then $E$ is Borel treeable.
\end{theorem}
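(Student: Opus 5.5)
We plan to build, Borel and uniformly in each $E$-class, a median graph on the class from finitely many treeings, and then apply a median-graph treeing algorithm. The first step is the standard reduction. Since $F$ has finite index in $E$, we use Lusin--Novikov to choose Borel maps $s_1,\dots,s_n\colon X\to X$ such that each $E$-class $[x]_E$ is partitioned into the $F$-classes of $s_1(x),\dots,s_n(x)$. We may first pass to the Borel set where the index equals $n$. We would rather not choose an ordering; what we really use is that each $E$-class carries the unordered finite set $[x]_E/F$. Fix a Borel treeing $T$ of $F$, so that $T$ restricted to each $F$-class is a tree.

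The second step produces a finite family of trees on each $E$-class. For each $F$-class $C\subseteq [x]_E$, we want a tree structure $T_C$ on the whole $E$-class. One option is to transport $T$ along the relation ``$y\mapsto$ the point of $C$ associated to $y$.'' That association is not canonical, so instead we take the product $\prod_{C\in[x]_E/F} (C,T|_C)$. This is a median graph, and $[x]_E$ embeds into it via $y\mapsto (\pi_C(y))_C$, where $\pi_C(y)$ is chosen canonically, for example as a Borel selector applied to finite data. Alternatively we use the convex hull of the image. The key point is that the product of trees is a median graph, and that $E$ acts on it (in the Borel sense, through the classes) without a preferred factor. Local finiteness of the trees is not given, but we can first replace $T$ by a locally finite treeing via the standard fact that treeable relations admit treeings of bounded degree on a suitable subset.

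The third step is to extract a tree. The median subalgebra generated by the image of the $E$-class inside the product of trees is a median algebra, and we want an $E$-invariant Borel treeing of it whose vertex set projects finite-to-one back to the class. We would filter this subalgebra as an increasing union of median subalgebras $M_0\subseteq M_1\subseteq\cdots$, where $M_k$ is generated by the points within $T$-distance $k$ in each coordinate. Each $M_k$ then has finite ``local'' structure. We apply the Chen--Poulin--Tao--Tserunyan treeing algorithm for median graphs at each stage, and finally pass to the limit. A treeing of a median graph whose vertices map finite-to-one onto $[x]_E$ gives a treeing of $E$, since treeability passes through finite-to-one Borel correspondences (standard, via JKL).

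The main obstacle is this limit. The CPTT algorithm applied to each $M_k$ produces treeings that need not be coherent, so it has to be extended to increasing unions in which the cube/hyperplane structure of $M_k$ stabilizes locally. Our first attempt would be to arrange the filtration so that each hyperplane of $M_{k}$ is eventually unsplit, making the treeing algorithm's output locally eventually constant, and then to define the limit tree as the pointwise limit.
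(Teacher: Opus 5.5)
Your overall architecture matches the paper's: a product of the trees, the median subalgebra generated by a diagonal, a filtration by hyperplane-finite pieces, and an extension of CPTT to the union. But the two steps that do the real work are missing.

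\textbf{The diagonal.} There is no canonical choice of $\pi_C(y)\in C$. A Lusin--Novikov selection such as $y\mapsto s_i(y)$ gives maps between $F$-classes with no injectivity or finite-to-one control, and that control is exactly what hyperplane-finiteness needs later. ``A Borel selector applied to finite data'' has nothing finite to act on, since $F$-classes are in general infinite. What you need is a Borel \emph{link}: a coherent family of bijections $\lambda_{C,C'}$ between the $F$-classes of each $E$-class. Such bijections generally cannot exist on all of $X$; for instance, an $E$-class may contain both a finite and an infinite $F$-class. The paper therefore uses \cite[7.3]{kechris-miller:topics} to get a Borel finite partial subequivalence relation $L\subseteq E$ whose classes meet each $F$-class of their $E$-class exactly once and whose domain $Y$ is a complete section. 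It then uses \cite[3.3]{JKL:cber} to pass from $F$ to $F|_Y$, and from $E|_Y$ back to $E$. The $L$-classes, viewed as finite transversals, are the diagonal $\Lambda\subseteq\prod_C C$, with $\Lambda\cong Y/L$.

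\textbf{The filtration and the limit.} As written, your $M_k$ (``generated by the points within $T$-distance $k$ in each coordinate'') either needs a base point in each class, which is not available Borel-ly unless $E$ is smooth, or is not well defined. Nothing in your proposal produces hyperplane-finiteness, which is the whole point: a product of an infinite tree with a nontrivial tree is not hyperplane-finite, and the subalgebra generated by an arbitrary diagonal need not be either.

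The paper instead uses cutoff forests. Keep an edge of $T$ iff all of its $\lambda$-images have length $\le n$, and on each component $B$ of the product of these cutoff forests take $\langle B\cap\Lambda\rangle$. On $B$ each $\lambda_{A,A'}$ is $n$-Lipschitz and injective, hence end-continuous. A two-factor lemma then gives hyperplane-finiteness: non-nested half-spaces of a generated subalgebra have generators in all four corners, and a boundary-finite separator then bounds the possible half-spaces. So the filtration is by partial equivalence relations $P_n$ whose classes are these subalgebras, not a single chain $M_0\subseteq M_1\subseteq\cdots$.

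Coherence is not obtained by a pointwise limit. That limit is ill-defined here, because successive stages sit inside one another as subalgebras, not subgraphs, so stage-$n$ edges are typically subdivided at stage $n+1$; and a limit of spanning trees need not be connected anyway. Coherence comes from invariant separability: inside each $P_{n+1}$-class $D$, distinct $P_n$-classes lie in distinct blocks of the $P_n$-invariant half-spaces of $D$. This is automatic, because the $P_n$ are defined via blocks of the decreasing half-space families $\mathcal H^{>n}$.

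This lets you build $T_{n+1}\supseteq T_n$ directly:
\begin{enumerate}
\item keep $T_n$ on each $P_n$-class;
\item join every other vertex of that class's block to a point of its (finite) pseudoprojection onto the class;
\item run CPTT on the blocks containing no $P_n$-class, and on the quotient median graph of blocks, joining adjacent blocks by one least edge.
\end{enumerate}
The union $\bigcup_n T_n$ is a tree on $\langle\Lambda\rangle$ in which every $P_n$-class is convex. A Voronoi argument transfers it to $\Lambda\cong Y/L$, and the quotient map $Y\to Y/L$ then gives treeability of $E|_Y$.
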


Recall that a \defn{graphing} of a Borel equivalence relation $E$ on a standard Borel space $X$ is a Borel graph on $X$ whose connected components are exactly the $E$-classes. 
A \defn{treeing} of $E$ is a graphing of $E$ that is acyclic. 
We say that $E$ is \defn{Borel treeable} if $E$ admits a treeing.  
A subequivalence relation $F$ of $E$ is said to have \defn{finite index} in $E$ if every $E$-class contains only finitely many $F$-classes; the number of $F$-classes in an $E$-class need not be uniformly bounded across the space.

Among countable Borel equivalence relations, treeable equivalence relations play a role analogous to that played by free groups among groups.
Indeed, a countable Borel equivalence relation is treeable if and only if it is free as a Borel groupoid (see \cite{alvarez:thesis}).

In the measured setting, much of the interest in treeability is motivated by the menagerie of {\bf strongly treeable} groups, i.e., locally compact second countable groups all of whose free probability-measure-preserving actions induce treeable orbit equivalence relations on an invariant conull set. 
This class of groups includes automorphism groups of locally finite trees, $\mathrm{SL}_2(\#R )$, finitely generated elementarily free groups, and all closed subgroups of these groups \cite{CGMTD}. 
\cref{intr:thm:treeable-finite-index} implies the following permanence property for this class of groups, and for two closely related classes (defined in \cref{sec:corollaries}), which is new even in the case of countable discrete groups, and resolves a problem raised by Gaboriau \cite[p.~89]{gaboriau:cost}.

\begin{corollary}[\cref{cor:strongly_treeable}]
\label{intr:cor:strongly_treeable}
Let $G$ be a locally compact second countable group. 
If $G$ contains a finite index open subgroup $H \le G$ that is strongly treeable, then $G$ itself is strongly treeable.
The same holds for measure strong treeability and Borel strong treeability.
In all cases this implies that $G$ has fixed price, provided $G$ is unimodular.
\end{corollary}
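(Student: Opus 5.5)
The plan is to reduce the corollary to \cref{intr:thm:treeable-finite-index} by passing from $G$ and $H$ to cross sections. Fix a free action $G \curvearrowright (X,\mu)$ on a standard Borel space; in the measured case $\mu$ is a $G$-invariant probability measure, and in the other variants the action is merely Borel or measure-class preserving. Restricting this action to $H$ gives a free action of $H$. Strong treeability of $G$ is defined through the countable Borel equivalence relation $E$ obtained by restricting the orbit equivalence relation $E_G$ to a lacunary (or cocompact) Borel cross section $Y \subseteq X$. The corresponding statement for $H$ is expressed through $E_H \cap Y^2$ for a cross section of the $H$-action. The hypothesis then says that $E_H|_{Y'}$ is treeable, up to a null set where appropriate, for some cross section $Y'$.

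The first step is to choose one set $Y$ that serves as a cross section for both actions. Since $H$ is open of finite index, the $G$-orbit of a point is a union of finitely many $H$-orbits, namely one for each coset in $G/H$. So any cross section for the $G$-action is also a cross section for the $H$-action: it meets every $H$-orbit, since $Y\cdot$ hits each coset translate after replacing $Y$ by $Y \cdot \{g_1,\dots,g_n\}$ for coset representatives $g_i$, which is still lacunary after thinning if needed. Set $F = E_H|_Y \subseteq E = E_G|_Y$. Each $E$-class is the intersection of $Y$ with a $G$-orbit, and this $G$-orbit splits into at most $[G:H]$ many $H$-orbits. Hence $F$ has finite index in $E$, uniformly bounded by $[G:H]$.

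The second step is to transfer treeability of $H$ to $F$. Treeability of $E_H$ restricted to one cross section is independent of the choice of cross section, because cross sections of the same orbit equivalence relation give stably (compressibly) equivalent relations, and treeability passes to restrictions to complete sections and back. This is the one point that needs care: we must check that the paper's definitions in \cref{sec:corollaries} make the notion independent of the cross section, including the conull-set and measure-class variants. With that in hand, $F$ is Borel treeable, possibly after discarding an invariant null set. Applying \cref{intr:thm:treeable-finite-index} then shows that $E$ is treeable. In the measured cases we first restrict to an $E$-invariant conull Borel set on which $F$ is treeable; such a set exists because the $E$-saturation of a null set is null for countable $E$ preserving measure class. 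Since the action was arbitrary, $G$ is strongly treeable.

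For fixed price when $G$ is unimodular, we invoke the known fact that treeable cross-section equivalence relations have cost attained by a treeing, so every free pmp action has cost $1+(\text{the }L^2\text{-type covolume quantity})$ given by Gaboriau's formula. Equivalently, the cost is determined by the cost of any one treeing via the cross-section (induction) formula for unimodular groups. So all free actions have the same cost. I expect the main obstacle to be the bookkeeping in the second step, namely matching the three definitions of strong treeability with cross-section independence, rather than any new idea.
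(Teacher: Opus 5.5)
Your overall architecture matches the paper's. You pass to a countable complete section, note that each $G$-orbit splits into at most $[G:H]$ $H$-orbits so that $E_H|_Z \subseteq E_G|_Z$ has finite index, apply \cref{thm:treeable-finite-index}, and transfer back.

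\textbf{The gap is in the measured step.} Measure strong treeability quantifies over \emph{every} Borel probability measure $\mu$ on $X$, not only measure-class-preserving ones. Your justification for an $E$-invariant conull set (``the saturation of a null set is null'') needs quasi-invariance, which is then unavailable. For example, with $\mu=\delta_x$, the null set $Gx\setminus Hx$ has conull saturation $Gx$. There is a second problem: for non-discrete $G$ a lacunary cross section is itself $\mu$-null. So ``conull'' inside the cross section means nothing until you define an induced measure class there, which you never do. You could partly repair your route by first replacing $\mu$ with a $G$-quasi-invariant $\nu \gg \mu$ (average translates of $\mu$ against a probability measure in the Haar class). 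You would still owe the measure bookkeeping on the cross section.

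\textbf{How the paper avoids this.} It saturates the good set instead of deleting the saturation of the bad one. Take an $H$-invariant $\mu$-conull Borel $Y \subseteq X$ whose $H$-orbit relation is treeable. Then $GY = \bigcup_i g_i Y$, over finitely many coset representatives, is Borel, $G$-invariant, and conull simply because it contains $Y$. By \cite{kechris:loccpt} and \cite[A.6]{CGMTD}, choose a Borel complete section $Z \subseteq Y$ for the $H$-action, countable on $H$-orbits, with $E_H|_Z$ treeable. This $Z$ is automatically a complete section for $E_G$ on $GY$. That also replaces your first step: your claim that a $G$-cross section is an $H$-cross section is false as stated, and your ``thinning'' could destroy the property you had just arranged.

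The main theorem is then applied to $E_H|_Z \subseteq E_G|_Z$ on $Z$ itself, so no $E$-invariant conull set is ever needed. Finally, attach each $x \in GY \setminus Z$ to a point of $Z \cap Gx$ chosen by Lusin--Novikov. This gives a treeing of the orbit relation of $G$ on $GY$, which is what the paper's definitions require: they concern the orbit relation itself, not a cross section. The Borel case is $Y = X$.

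Your fixed-price remark is consistent with the paper, which simply invokes the known fact that strongly treeable unimodular groups have fixed price.
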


\subsection{Large-scale geometry, median graphs and algebras}

Every free Borel action of a free group of finite or countably infinite rank generates a treeable orbit equivalence relation.
Jackson--Kechris--Louveau proved that the same holds for virtually free groups \cite[Proposition~3.4]{JKL:cber} using the celebrated Almost-Stability Theorem of Dicks--Dunwoody \cite{dicks-dunwoody:groups-graphs}.
Since then, the finite index over treeable problem has spurred the development of several geometric methods and ideas in the theory of countable Borel equivalence relations; see e.g.,
\cite[Part~2]{tserunyan:thesis},
\cite{tserunyan:stallings},
\cite{jardónsánchez:treedecomp},
\cite{CPTT:quasitree},
\cite{ishikura:decomp_cohomology}, 
\cite{pichot:almost_equality_classes}.

Of particular relevance to this article are the results of Chen--Poulin--Tao--Tserunyan \cite{CPTT:quasitree}, which show that a countable Borel equivalence relation is treeable whenever it admits a locally finite Borel graphing
whose components are either
\begin{enumerate*}[roman]
\item \label{intr:it:cptt:quasitree}%
quasi-isometric to trees or
\item
of bounded tree-width.
\end{enumerate*}

Note that \cref{intr:it:cptt:quasitree} is an analogue, for equivalence relations, of the fact that a finitely generated torsion-free group that is quasi-isometric to a free group is itself free (see \cite{ghys-delaharpe:hypgrp}).
This does not immediately imply that treeability of equivalence relations is preserved under finite index extensions:
unlike a finitely generated group,
a countable Borel equivalence relation does not possess an intrinsic quasi-isometry type independent of a graphing, 
and a finite index extension need not provide the coarse geometric compatibility one might hope for.
However, the results of \cite{CPTT:quasitree} do already give an affirmative answer to \cref{question:intro} when the path metrics of the trees all lie in the same bi-Lipschitz equivalence class.
In this case, \cite[\S5.B]{CPTT:quasitree} applied to the union of the trees produces a canonical \emph{hyperplane-finite median graph}, from which
\cite[Theorem~3.1]{CPTT:quasitree} provides a single (slightly less canonical, but still Borel definable) tree.
Thus, the difficulty in \cref{question:intro} and the finite index over treeable problem lies in the lack of geometric compatibility between the input trees.

A \defn{median graph} is a higher-dimensional analog of a tree, with a well-behaved convexity structure carved out by \emph{hyperplanes} akin to single-edge cuts in a tree (see \cref{sec:prelim-med} for the precise definitions).
A typical example of a median graph is a product of trees.
Median graphs have been studied extensively in metric geometry and geometric group theory as (1-skeleta of) CAT(0) cube complexes, and they are also well-known in lattice theory as interval-finite \emph{median algebras}.
See \cite{birkhoff-kiss:median}, \cite{isbell:median}, \cite{bandelt-hedlikova:median}, \cite{sageev:cube_complexes}, \cite{roller:median}, \cite{bowditch:median}.

\begin{figure}
\centering
\subcaptionbox{%
    \label{fig:medgph-hyp:square*cube}%
    $(\#Z/2\#Z)^2 * (\#Z/2\#Z)^3$%
}{\includegraphics{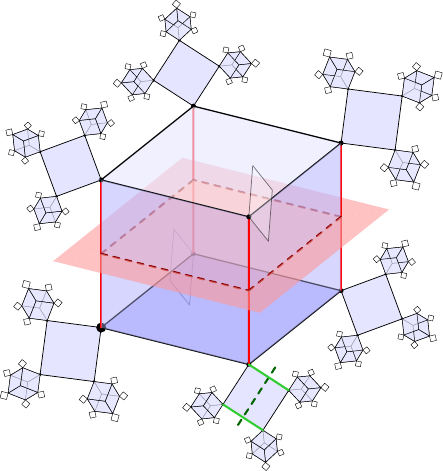}}
\hfil
\subcaptionbox{%
    \label{fig:medgph-hyp:3ble*Z}%
    $(\#Z/2\#Z)^{*3} \times \#Z$%
}{\includegraphics{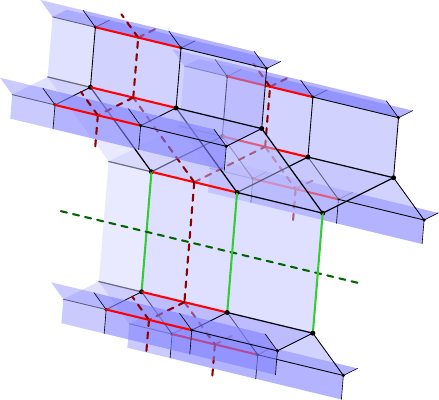}}
\caption{Two median graphs, along with two hyperplanes in each (dashed lines).}
\label{fig:medgph-hyp}
\end{figure}

Among median graphs, the most tree-like are those which are \emph{hyperplane-finite}, i.e., every hyperplane consists of only finitely many edges.
For example, the standard Cayley graph of $(\#Z/2\#Z)^m * (\#Z/2\#Z)^n$ is hyperplane-finite, while the product of an infinite tree with a nontrivial tree is not; see \cref{fig:medgph-hyp}.
Every median graph having a uniform bound on the size of its hyperplanes is quasi-isometric to a tree (by e.g., Manning's bottleneck criterion \cite{manning:quasitree}).
Even without such a uniform bound, the aforementioned \cite[Theorem~3.1]{CPTT:quasitree} produces a spanning tree of a hyperplane-finite median graph (not necessarily quasi-isometric to it) in a Borel manner, which suffices for proving Borel treeability.

\subsection{Proof outline}

Our proof of \cref{intr:thm:treeable-finite-index} is based on a reduction to the machinery of \cite{CPTT:quasitree} using the aforementioned ideas.
The main ingredient is a fully canonical construction of a system of hyperplane-finite median graphs with nested vertex sets, from a finite collection of trees with an arbitrary identification between their vertex sets, without any geometric compatibility hypotheses.
We isolate this core combinatorial result as \cref{thm:tree-link-subalgebra}, which is our positive answer to \cref{question:intro} (as explained in \cref{subsec:functoriality}).

Crucially, this construction uses the alternative representation of median graphs mentioned above: interval-finite \defn{median algebras}, i.e., sets equipped with a ternary ``median'' operation $\ang{x,y,z}$.
(See again \cref{sec:prelim-med}.)
While such algebraic structures are equivalent to median graphs, \emph{a median subalgebra need not be a subgraph; see \cref{fig:medgph-subalg} below}.
The nested system constructed in \cref{thm:tree-link-subalgebra} consists of median graphs that are successively embedded as median subalgebras, and not as subgraphs.
Namely, it comprises a filtration, by disjoint unions of median subalgebras, of the subalgebra generated by the ``diagonal'' in the product of the input trees.

The following is an outline of the main steps in the proof of \cref{intr:thm:treeable-finite-index} (\cref{thm:treeable-finite-index}):
\begin{enumerate}

\item
Using \cite[7.3]{kechris-miller:topics}, one may reduce to the case where there exists a Borel identification (a ``link'') between the $F$-classes contained within each $E$-class
(see proof of \cref{thm:treeable-finite-index}).
Working uniformly over all $E$-classes, this essentially reduces the Borel question to the equivalent form of \cref{question:intro} where the vertex sets of the trees are identified via a link.

\item
Stratify the finitely many input trees into an increasing exhaustive sequence of subforests such that for each pair of input trees, the link restricts to a bi-Lipschitz map between (\textit{typically non-convex}) subsets of each two forest components within these trees (see proof of \cref{thm:tree-link-subalgebra}).

\item
At a given level of the stratification, in each connected component of the product of the finitely many forests, take the median subalgebra generated by the restriction of the link (as a hypergraph); this subalgebra is a hyperplane-finite median graph
(see \cref{sec:medgph-finhyp-link-subalgebra}).
One could perhaps view this as an averaging procedure (in some loose sense) for well-behaved pieces of the input trees.
The disjoint union of these subalgebras, over all connected components of the product, forms one level of the aforementioned filtration.

\item
We then give an extension of the tree-building algorithm from \cite[Theorem~3.1]{CPTT:quasitree} to median algebras equipped with such a filtration by hyperplane-finite median graphs
(see \cref{thm:cptt-union}).

\item 
Applying this to the output of step (3), followed by a standard Voronoi cell argument, yields a single tree on the original vertex set (see \cref{thm:tree-link-treeable}), proving \cref{thm:treeable-finite-index}.

\end{enumerate}

In \cref{sec:corollaries} we deduce some direct consequences of \cref{intr:thm:treeable-finite-index}, including \cref{intr:cor:strongly_treeable}.

\subsection{Single structures and uniform definability}
\label{sec:intro-definability}

All of the results proved before \cref{sec:treeable-finite-index} are formulated for individual structures, rather than for Borel measurable assignments of countable structures to equivalence classes, or for more general Borel fields of such structures.
In this setting, the counterpart of verifying that a construction is Borel is verifying that it is \emph{uniformly definable}, which may be read informally as ``Borel implementable and canonical in the stated inputs.''
This sacrifices no generality and allows us to work directly with the relevant structures (such as products of trees, median subalgebras, or quotient median graphs) without the additional, often cumbersome, notation accompanying the Borel formalism.

The connection between uniform definability and Borel structurability has been used at an informal level for quite some time.
A formal correspondence was established recently by Banerjee--Chen \cite{banerjee-chen:strinterp} (building on the work of Chen--Kechris \cite{chen-kechris:structurable}):
they show that problems in Borel structurability and locally countable Borel combinatorics can be formulated equivalently as uniform definability problems on countable structures equipped with auxiliary combinatorial inputs.
The auxiliary inputs correspond to objects that are always available on equivalence classes in the Borel setting, such as a linear order, or a proper $\#N$-coloring of the intersection graph on finite subsets.
These inputs often serve to break symmetries, allowing choices to be specified by uniform definitions even when no such choices are definable from the original structures alone.

The uniform definability framework also makes fully transparent which auxiliary inputs are used in each construction.
For example, the filtration in \cref{thm:tree-link-subalgebra} is defined from the input forest and link alone and may therefore be regarded as fully canonical,
whereas the tree construction in \cref{thm:cptt} uses the auxiliary input of a linear order on the edge set, together with a proper $\#N$-coloring of the intersection graph on finite sets of edges.

Another benefit of presenting a construction in the framework of uniform definability is that it entails functoriality of the construction under isomorphisms of the inputs.
In the case of \cref{thm:tree-link-subalgebra}, we spell out the resulting functor for trees on a common vertex set in \cref{subsec:functoriality}, giving a precise formulation of our answer to \cref{question:intro}.

At the end of the proofs of \cref{thm:tree-link-subalgebra,thm:cptt,thm:cptt-union}, we indicate why each construction is uniformly definable.
Such a verification is analogous to checking that a construction is Borel when working with the formalism of Borel structurability.
We expect many readers will find these verifications routine, but we include the details to help those who may be unfamiliar with translating constructions into the uniform definability framework.

For background on Borel structurability see \cite{kechris:amenable}, \cite[2.17]{JKL:cber}, \cite{gaboriau:l2}, \cite{seward-tucker-drob:2shift}, \cite{chen-kechris:structurable}, \cite{banerjee-chen:strinterp}.
We have included a short appendix (\cref{sec:Lomega1omega}) on uniform definability, with examples informed by the present article.
The appendix also explains how this framework formalizes constructions, such as quotients, that change the underlying set.

\subsection{Acknowledgments}

It is a pleasure to dedicate this work to our Ph.D.\ advisor, Alekos Kechris.
This is one of many collaborations that would not exist without him.
We continually draw ideas and inspiration from our past discussions with him, not to mention his many books and papers, including of course his work with Jackson and Louveau \cite{JKL:cber} which motivated the present article.
Our BibTeX key for his book \cite{bible} is simply \texttt{bible}.

Alekos's kindness and generosity toward students and colleagues have meant as much to us as his mathematical guidance. 
We are deeply grateful for his support and encouragement over the years.

A.Ts.\ was supported by NSERC Discovery Grant RGPIN-2020-07120.
R.T.D.\ was supported by NSF grant DMS-2246684.

\subsection*{Declaration of AI use}

In researching this article we consulted the large language model ChatGPT-6 Astra Ultra; we now describe the model's mathematical input.
We conceived of the approach to proving \cref{intr:thm:treeable-finite-index} described above prior to consulting GPT. 
This approach builds the final treeing in stages, where the machinery from \cite{CPTT:quasitree} is applied at each stage $n$ yielding a forest.
GPT provided a crucial observation which ensures the coherence of the forest obtained at stage $n+1$ with that obtained at stage $n$.
This observation concerned a ``dual'' formulation of the construction, which we then reconceptualized into the product-of-trees setting described above;
in this current form of the proof, GPT's observation remains implicit but becomes essentially definitional.

In more detail, in the index $2$ setting we had observed that \cite{CPTT:quasitree} could be used to show that the relations $Q_n$ defined in \cref{rmk:Q_nDefinition} are $\link$-invariantly treeable.
We had also observed (using groupoid Bass--Serre theory, as developed by Alvarez \cite{alvarez:thesis}) that each $Q_n$ is freely complemented in $Q_{n+1}$, and that \cref{intr:thm:treeable-finite-index} would follow if each $Q_n$ were moreover $\link$-invariantly freely complemented in $Q_{n+1}$ (see also \cref{rmk:invariant-free-complement}).
Implicit in the application of \cite{CPTT:quasitree} is a natural median graph $M_D$ associated to each $Q_{n+1}$-class $D$, dual to the wallspace on $D$ induced by importing all half-spaces of $T^{\leq n+1}$ via the link $\link$.
GPT supplied the following observation that we missed: for a given $Q_{n+1}$-class $D$, the partial equivalence relation $Q_n|_D$ is invariantly separable by half-spaces in $M_D$.
In our current proof, the corresponding assertion is \cref{thm:tree-link-subalgebra}.(\labelcref{thm:tree-link-subalgebra:invsep}), which follows directly from the definition of the relations $P_n$ in terms of the blocks of the half-space filtration.

The article was entirely written by the three named authors.
After the initial draft was completed, we asked ChatGPT for editorial suggestions and made several minor edits based on the model's feedback.
We take full responsibility for the contents and correctness of the article.

\section{Preliminaries}
\label{sec:prelim}

\subsection{Graphs}
\label{sec:prelim-graph}

By a \defn{graph} $G$ on vertex set $X$, we mean a symmetric, irreflexive binary relation $G \subseteq X^2$.
We will often refer to the graph as $(X,G)$.

Note that $G$ refers to the set of \emph{oriented} edges $(x,y)$.
We write $G/\pm$ for the \defn{unoriented edge set} of $G$.
We will interchangeably regard an unoriented edge as a doubleton set $\set{x,y}$, or as an equivalence class of oriented edges $\set{(x,y), (y,x)}$;
the notation $G/\pm$ is meant to connote the latter representation.
In particular, the latter means that a subset of unoriented edges $F \subseteq G/\pm$ may be equivalently represented as its lift to a subset $\~F \subseteq G$ that is invariant under edge flips.

A graph $(X,G)$ is \defn{locally finite} if the degree of each vertex $x \in X$ is finite.

For a graph $(X,G)$, we write $\#E_G \subseteq X^2$ for the \defn{connectedness relation} of $G$, i.e., the equivalence relation generated by $G$.
We write $X/G := X/\#E_G$ for the \defn{set of connected components} of $G$.

The \defn{path metric} of a connected graph $(X,G)$ is
\begin{eqaligned*}
d_G : X^2 &--> \#N  \\
(x,y) &|--> \min \set{n | \exists \text{ path } x = x_0 \mathrel{G} x_1 \mathrel{G} x_2 \mathrel{G} \dotsb \mathrel{G} x_n = y}.
\end{eqaligned*}

\begin{definition}
\label{def:graph-interval}
For a connected graph $(X,G)$, for $x, y \in X$, the \defn{interval} between them is the union of all geodesic paths from $x$ to $y$, i.e.,
\begin{eqaligned*}
[x,y] := \set{z \in X | d_G(x, y) = d_G(x, z) + d_G(z, y)}.
\end{eqaligned*}
We say $z \in [x,y]$ is \defn{between} $x,y$.

A subset $A \subseteq X$ is \defn{convex} if
$\forall x, y \in A\, ([x, y] \subseteq A)$.
\end{definition}

\subsection{Median graphs and algebras}
\label{sec:prelim-med}

\begin{definition}
\label{def:medgph}
A \defn{median graph} $(X,G)$ is a nonempty graph such that for any $x, y, z \in X$, there is a unique vertex which lies on a geodesic between each pair among $x, y, z$, i.e.,
\begin{eqaligned*}
[x,y] \cap [x,z] \cap [y,z] =: \set{\ang{x,y,z}}
\end{eqaligned*}
is a single vertex $\ang{x,y,z} \in X$, called the \defn{median} of $x, y, z$.
\end{definition}

\begin{example}
Every tree is a median graph.
\end{example}

Intuitively, median graphs are higher-dimensional generalizations of trees.
This is made precise by their equivalent characterization as the 1-skeleta of \emph{CAT($0$) cube complexes}.
Median graphs have also been extensively studied from a variety of other perspectives, such as lattice theory, universal algebra, and Boolean satisfiability.
For detailed surveys, see \cite{bowditch:median}, \cite{roller:median}, \cite{bandelt-hedlikova:median}, \cite{CPTT:quasitree}.
Here, we record the basic facts we will use in this paper.

The ternary median operation on a median graph may be generalized to the following abstract algebraic structure:

\begin{definition}
\label{def:medalg}
A \defn{median algebra} $X = (X, \ang{\blank,\blank,\blank})$ is a set $X$ equipped with a ternary operation
$\ang{\blank,\blank,\blank} : X^3 -> X$,
satisfying the following equational axioms:
\begin{gather*}
\tag{symmetry}
\ang{x,y,z} = \ang{x,z,y} = \ang{y,z,x},  \\
\tag{idempotence}
\ang{x,x,y} = x,  \\
\tag{associativity}
\ang{w,\ang{w,x,y},z} = \ang{w,x,\ang{w,y,z}}.
\end{gather*}
(The last axiom says that the binary operation $\ang{w,\blank,\blank}$ is associative for each fixed $w$.)
\end{definition}

It is readily seen that for every median graph $(X,G)$, the vertex set $X$ equipped with the graph-theoretic median operation becomes a median algebra.
In a general median algebra, intervals and convexity still make sense:

\begin{definition}
\label{def:medalg-interval}
For a median algebra $X$, for $x, y \in X$, the \defn{interval} between them is the image, equivalently the fixed points, of the idempotent map $\ang{x,y,\blank} : X -> X$ (which is idempotent by the associativity and idempotence axioms):
\begin{eqaligned*}
[x,y] := \set{\ang{x,y,z} | z \in X} = \set{z \in X | z = \ang{x,y,z}}. 
\end{eqaligned*}
As in \cref{def:medgph}, it is easily verified that the median $\ang{x,y,z}$ may be recovered as the intersection of the three pairwise intervals.

\defn{Convexity} of $A \subseteq X$ is defined as for graphs (\cref{def:graph-interval}).
\end{definition}

\begin{proposition}[see \cite[16.1.2]{bowditch:median}, \cite[4.3]{bandelt-hedlikova:median}]
\label{thm:medalg-medgph}
Median graphs are equivalent (precisely, $\@L_{\omega_1\omega}$-interdefinable; see \cref{def:Lomega1omega-def}) with nonempty median algebras that are \defn{interval-finite}, i.e., every interval $[x,y]$ is finite.
Given an interval-finite median algebra $X$, the edge relation $G \subseteq X^2$ is recovered via
\begin{eqaligned*}
x \mathrel{G} y  \coloniff  x \ne y \AND [x,y] = \set{x,y}.
\end{eqaligned*}
Henceforth, we identify median graphs with nonempty interval-finite median algebras.
\end{proposition}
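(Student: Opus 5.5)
We must show two directions: a median graph's vertex set with the graph median is an interval-finite median algebra, and conversely an interval-finite median algebra with the stated edge relation $G$ is a median graph whose graph median recovers $\ang{\blank,\blank,\blank}$. The same explicit formulas serve as the $\@L_{\omega_1\omega}$-interdefinitions. The median is first-order definable from $G$: $\ang{x,y,z}=w$ iff $w$ is in all three pairwise intervals, and betweenness is the countable disjunction over $n$ of "$d_G = n$" clauses. The edge relation $x \mathrel{G} y \iff x\ne y \wedge \forall z\,(\ang{x,y,z}\in\{x,y\})$ is first-order in the median. So the substance is that the two constructions are mutually inverse.

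\emph{Graph to algebra.} Given a median graph, symmetry and idempotence are immediate from the definition via intervals. Intervals $[x,y]$ are finite because the graph is connected and locally finite on geodesics; more precisely, every vertex of $[x,y]$ lies on a geodesic of length $d(x,y)$. Finiteness then follows by induction on $d(x,y)$, using that $[x,y]\setminus\{x\}$ is contained in the union of $[x',y]$ over neighbours $x'$ of $x$ in $[x,y]$. The number of such neighbours is finite, since each is determined by a median with $y$ — the standard "quadrangle" argument. I would not reprove this but cite \cite{bandelt-hedlikova:median}.

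Associativity, $\ang{w,\ang{w,x,y},z}=\ang{w,x,\ang{w,y,z}}$, is the main graph-theoretic obstacle. I would prove it via convexity: first show intervals are convex and that $\ang{w,\blank,\blank}$ is the "meet" in the order $u\le_w v \iff u\in[w,v]$. In a median graph this order makes each $[w,v]$ a distributive lattice (a median semilattice), and associativity of the meet follows. This is where the uniqueness of medians is used essentially, through the gate property of intervals.

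\emph{Algebra to graph.} Given an interval-finite median algebra, define $G$ as stated. To see it is connected, take a maximal chain $x=u_0<u_1<\dots<u_n=y$ in the finite poset $([x,y],\le_x)$. Consecutive elements are $G$-adjacent, since any $z$ strictly between would lie in $[u_i,u_{i+1}]\subseteq[x,y]$, contradicting maximality. Next, show all maximal chains in $[x,y]$ have the same length, via the Jordan–Dedekind property of distributive lattices. Then show $d_G(x,y)$ equals this common length, and deduce that the graph interval coincides with the algebraic interval $[x,y]$. Once intervals agree, the algebraic median is the unique point of the triple intersection (\cref{def:medalg-interval}), so $G$ is a median graph with the same median. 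Conversely, in a median graph, $[x,y]=\{x,y\}$ with $x\ne y$ holds iff $d(x,y)=1$, so the edge relation is recovered.

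The step I expect to be hardest is identifying graph intervals with algebraic intervals: specifically, showing that a geodesic in $G$ between $x,y$ stays inside the algebraic $[x,y]$. I would try the following. Given a $G$-path $x=v_0,\dots,v_m=y$, project it via $v_i\mapsto\ang{x,y,v_i}$. Show that each projected step is either stationary or a $G$-edge, since projection to an interval is a median homomorphism and preserves the two-point-interval property or collapses. This gives $d$ within $[x,y]$ at most $m$. A rank function (height in $\le_x$) changing by $\pm1$ along edges then shows the path length is at least the chain length, with equality forcing the path to be monotone and inside $[x,y]$.
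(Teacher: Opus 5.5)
The paper gives no proof of this proposition; it defers to Bowditch and Bandelt--Hedl\'ikov\'a. So I am judging your argument on its own. Most of it is sound. Under $\le_x$, $[x,y]$ is a finite distributive lattice and hence graded. Maximal chains give $G$-paths. Projecting a $G$-path by $z\mapsto\langle x,y,z\rangle$ gives a sequence in $[x,y]$ whose steps are stationary or $G$-edges. The rank changes by $0$ or $\pm1$ per step. Together these correctly prove $d_G(x,y)=n$, the common length of maximal chains.

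\textbf{The gap.} The step you flag as hardest is not closed by what you wrote. Take a geodesic $v_0,\dots,v_n$ and set $p_i=\langle x,y,v_i\rangle$. The projection/rank argument only shows that $p_0,\dots,p_n$ is a strictly increasing path in $([x,y],\le_x)$. It says nothing about how $v_i$ relates to $p_i$, so ``inside $[x,y]$'' does not follow. Without this you have not identified graph intervals with algebraic ones. You therefore have not shown that $G$ is median with median $\langle x,y,z\rangle$.

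\textbf{The fix.} It is short and uses only what you already have.
\begin{enumerate}
\item Algebraic $\subseteq$ graph interval. Let $b\in[x,y]$ and extend the chain $x\le_x b\le_x y$ to a maximal chain. Its consecutive elements are $G$-adjacent, so this is a path of length $n$ through $b$. Hence $d_G(x,b)+d_G(b,y)=d_G(x,y)$.
\item Graph $\subseteq$ algebraic interval. Suppose $d_G(x,z)+d_G(z,y)=d_G(x,y)$, and let $m=\langle x,y,z\rangle$. Since $m$ lies in the algebraic intervals $[x,y]$, $[x,z]$ and $[y,z]$, step 1 gives $d_G(x,z)+d_G(z,y)=d_G(x,y)+2d_G(m,z)$. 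This forces $z=m\in[x,y]$.
\end{enumerate}

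\textbf{Smaller points.}
\begin{enumerate}
\item In the connectivity step, a point $z\in[u_i,u_{i+1}]\subseteq[x,y]$ contradicts maximality only because the median identities also give $u_i\in[x,z]$ and $z\in[x,u_{i+1}]$. Say so.
\item The rank changes by exactly $\pm1$ along an edge $\{u,v\}\subseteq[x,y]$ for a specific reason. First, $\langle x,u,v\rangle\in[u,v]=\{u,v\}$ makes $u$ and $v$ comparable. Then nothing lies strictly between them.
\item Your justification of interval-finiteness for median graphs (``each neighbour is determined by a median with $y$'') is not an argument. Median graphs need not be locally finite. The fact is still true: each neighbour of $x$ in $[x,y]$ is cut off from $x$ by a distinct half-space separating $x$ from $y$, and there are only $d_G(x,y)$ such half-spaces.
\end{enumerate}
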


\begin{remark}
\label{rmk:medalg-medgph}
Essentially all median algebras of relevance for this article are interval-finite, and hence are median graphs.
However, in our main constructions we make crucial use of certain notions that are inherently algebraic, rather than graph-theoretic, and are most naturally expressed within the category of median algebras.
This includes the following notions:
\end{remark}

\begin{definition}
\label{def:med-subalg}
A \defn{median subalgebra} of a median algebra $X$ is a subset $Y \subseteq X$ that is closed under the operation $\ang{\blank,\blank,\blank}$.

A median subalgebra $Y$ of a median graph $X$ is still interval-finite, hence is a median graph in its own right.
However, \emph{it may not be a subgraph}: edges in $Y$ may not be edges in $X$ (see \cref{fig:medgph-subalg:subalg}).

Every convex subset is a median subalgebra (but not conversely).
On a convex subset of a median graph, the induced median graph is indeed an induced subgraph of the ambient graph.
\end{definition}

\removelastskip
\tikzset{
    every median subalgebra node/.style={
        draw,
        circle,
        DarkGreen,
        very thick,
        inner sep=0pt,
        minimum width=2.5mm,
    },
    every median subalgebra edge/.style={
        draw,
        Green,
        opacity=0.5,
        line width=1.5mm,
        every edge quotes/.append style={DarkGreen, opacity=1},
    },
    hyperplane/.style={
        draw,
        DarkRed,
        very thick,
        dashed,
        every edge quotes/.append style={scriptstyle, inner sep=2pt},
    },
}
\makeatletter
\ExplSyntaxOn
\NewDocumentCommand \medsubalgnode {D(){}} {
    \tl_set:Nn \l__medsubalgnode_name_tl {#1}
    \path[
        every~edge/.style={every~median~subalgebra~edge},
    ] node[
        every~median~subalgebra~node,
        /utils/exec={
            \cs_set_eq:NN \tikz@fig@scan@at \__medsubalgnode_scan_at:w
        },
    ]
}
\cs_new_protected:Npn \__medsubalgnode_scan_at:w at {
    \peek_remove_spaces:n \__medsubalgnode_scan_at_aux:w
}
\cs_new_protected:Npn \__medsubalgnode_scan_at_aux:w ( #1 ) {
    \__medsubalgnode_set_at_aux:w ({#1})
    \tikz@@scan@fig
}
\cs_new_protected:Npn \__medsubalgnode_set_at_aux:w ( #1 ) {
    \tl_if_empty:NF \l__medsubalgnode_name_tl {
        \str_set:Ne \l_tmpa_tl {#1}
        \tl_replace_all:Nnn \l_tmpa_tl {,} {-}
        \tl_set:Ne \tikz@fig@name {\l__medsubalgnode_name_tl \l_tmpa_tl}
    }
    \tikz@scan@one@point \tikz@set@at (#1)
}
\ExplSyntaxOff
\makeatother
\begin{figure}
\let\VerticalSubalg=\BooleanTrue
\MathDeactivate
\centering
%
%% Subalgebra picture
\subcaptionbox{\label{fig:medgph-subalg:subalg}}
{\begin{tikzpicture}
% flip picture if \VerticalSubalg is true
\IfBooleanT\VerticalSubalg{\tikzset{cm={0,0.8,0.8,0,(0,0)}}}
% ambient grid
\draw (1,0) grid[step=1] (10,4);
% subalgebra nodes
\medsubalgnode(n) at (1,1) {};
\medsubalgnode(n) at (4,0) {};
\medsubalgnode(n) at (4,1) {}
    edge["$Y$"{\IfBooleanTF\VerticalSubalg{right}{above}}] (n1-1)
    edge (n4-0);
\medsubalgnode(n) at (6,2) {} edge (n4-1);
\medsubalgnode(n) at (7,3) {} edge (n6-2);
\medsubalgnode(n) at (7,4) {} edge (n7-3);
\medsubalgnode(n) at (9,3) {} edge (n7-3);
\medsubalgnode(n) at (9,4) {} edge (n7-4) edge (n9-3);
\end{tikzpicture}}%
\hfil
%
%% Hyperplanes picture
\subcaptionbox{\label{fig:medgph-subalg:hyp}}
{\begin{tikzpicture}
% flip picture if \VerticalSubalg is true
\IfBooleanT\VerticalSubalg{\tikzset{cm={0,0.8,0.8,0,(0,0)}}}
% ambient grid
\draw[opacity=0.2] (1,0) grid[step=1] (10,4);
% subalgebra nodes
\medsubalgnode(n) at (1,1) {};
\medsubalgnode(n) at (4,0) {};
\medsubalgnode(n) at (4,1) {}
    edge["$Y$"{\IfBooleanTF\VerticalSubalg{right}{above}}] (n1-1)
    edge (n4-0);
\medsubalgnode(n) at (6,2) {} edge (n4-1);
\medsubalgnode(n) at (7,3) {} edge (n6-2);
\medsubalgnode(n) at (7,4) {} edge (n7-3);
\medsubalgnode(n) at (9,3) {} edge (n7-3);
\medsubalgnode(n) at (9,4) {} edge (n7-4) edge (n9-3);
% hyperplanes
\coordinate (H) at ($(n4-1)!0.5!(n6-2)$);
\draw[hyperplane]
    ($(H)!1cm!90:(n6-2)$)
    to["$H$"'{pos=0.1}, "$\neg H$"{pos=0.1}]
    ($(H)!-1cm!90:(n6-2)$);
\coordinate (K) at ($(n7-3)!0.5!(n7-4)$);
\draw[hyperplane]
    ($(K)!0.8cm!90:(n7-4)$)
    to["$K$"{pos=0.05}, "$\neg K$"'{pos=0.05}]
    ($(K)!-2.6cm!90:(n7-4)$);
\end{tikzpicture}}%
\hfil
%
%% Pseudoprojection picture
\subcaptionbox{\label{fig:medgph-subalg:psproj}}
{\begin{tikzpicture}
% flip picture if \VerticalSubalg is true
\IfBooleanT\VerticalSubalg{\tikzset{cm={0,0.8,0.8,0,(0,0)}}}
% ambient grid
\draw[opacity=0.2] (1,0) grid[step=1] (10,4);
% subalgebra nodes
\medsubalgnode(n) at (1,1) {};
\medsubalgnode(n) at (4,0) {};
\medsubalgnode(n) at (4,1) {}
    edge["$Y$"{\IfBooleanTF\VerticalSubalg{right}{above}}] (n1-1)
    edge (n4-0);
\medsubalgnode(n) at (6,2) {} edge (n4-1);
\medsubalgnode(n) at (7,3) {} edge (n6-2);
\medsubalgnode(n) at (7,4) {} edge (n7-3);
\medsubalgnode(n) at (9,3) {} edge (n7-3);
\medsubalgnode(n) at (9,4) {} edge (n7-4) edge (n9-3);
% position of vertex x that is being pseudoprojected
\coordinate (x) at (8,0);
% draw intervals between x and points in pseudoprojection
\ExplSyntaxOn
\group_begin:
\clist_map_inline:nn {n4-0, n6-2, n7-3, n9-3} {
    \draw[white, double=Blue, line~width=0.6pt, double~distance=0.6pt]
        (x.center) rectangle (#1.center);
    \path[Blue, opacity=0.5, pattern=north~ \bool_if:NTF \l_tmpa_bool {east} {west} ~lines, pattern~color=Blue]
        (x.center) rectangle (#1.center);
    \bool_set_inverse:N \l_tmpa_bool
}
\group_end:
\ExplSyntaxOff
% draw x
\node[fill=white, text=DarkBlue, inner sep=0pt, minimum width=1em, minimum height=1em] at (x) {$x$};
% draw loop around pseudoprojection
\def\r{2ex}  % spacing inside loop
\draw[thick, DarkBlue, rounded corners]
    ($(n9-3) + (0:\r)$)
        node[\IfBooleanTF\VerticalSubalg{above}{right}, DarkBlue, fill=white]{$\psproj_Y(x)$}
    -- ($(n9-3) + (45:{sqrt(2)*\r})$)
    -- ($(n7-3) + (90:\r)$)
    -- ($(n7-3) + (135:\r)$)
    -- ($(n4-0) + (135:\r)$)
    -- ($(n4-0) + (180:\r)$)
    |- ($(n4-0) + (-90:\r)$)
    -- ($(n4-0) + (-45:\r)$)
    -- ($(n7-3) + (-67.5:{(1+sqrt(2))/2*\r})$)
    -- ($(n9-3) + (-45:{sqrt(2)*\r})$)
    -- cycle;
\end{tikzpicture}}%

\caption{%
    \subref{fig:medgph-subalg:subalg}
    A median subalgebra $Y$ (circled vertices) of a grid median graph,
    and the edges of the corresponding median graph on that subalgebra (thick lines);
    together with
    \subref{fig:medgph-subalg:hyp}
    two hyperplanes (dashed lines) within $Y$;
    and
    \subref{fig:medgph-subalg:psproj}
    the pseudoprojection (\cref{thm:medgph-psproj}) of a vertex $x$ onto $Y$, and all minimal intervals between them.
}
\label{fig:medgph-subalg}
\end{figure}

\begin{definition}
\label{def:med-prod}
The \defn{product} of two median algebras $X, Y$ is the Cartesian product $X \times Y$ with the coordinatewise median operation.
Similarly for an arbitrary product $\prod_{i \in I} X_i$ of an indexed family of median algebras $(X_i)_{i \in I}$.

For a finite family of graphs $(X_i,G_i)_{i \in I}$, their \defn{box product graph} $\bigboxprod_{i \in I} G_i$ on the vertex set $\prod_{i \in I} X_i$ is given as follows:
for two $I$-ary tuples $\→x = (x_i)_{i \in I}$ and $\→y = (y_i)_{i \in I}$ in $\prod_{i \in I} X_i$,
\begin{eqaligned*}
\→x \mathrel{(\textstyle\bigboxprod_{i \in I} G_i)} \→y
\coloniff  \exists i \in I\, (x_i \mathrel{G_i} y_i \AND \forall j \ne i\, (x_j = y_j)).
\end{eqaligned*}
When each $G_i$ is a median graph, then their box product is also median, and corresponds to the product median algebra $\prod_{i \in I} X_i$.
The box product is the only ``product'' of graphs considered in this paper; we henceforth simply call it the \defn{product graph}.

For example, a $\#Z^2$-grid is the product of two copies of a $\#Z$-line.
\end{definition}

\begin{definition}
\label{def:med-homom}
A \defn{median homomorphism} $f : X -> Y$ between two median algebras is a map that is median-preserving, i.e., satisfies, for all $x_1, x_2, x_3 \in X$,
\begin{eqaligned*}
f(\ang{x_1, x_2, x_3}_X) = \ang{f(x_1), f(x_2), f(x_3)}_Y.
\end{eqaligned*}
Equivalently by \cref{def:medalg-interval}, $f$ preserves betweenness, i.e.,
\begin{eqaligned*}
x_3 \in [x_1, x_2]_X  \implies  f(x_3) \in [f(x_1), f(x_2)]_Y.
\end{eqaligned*}
Examples include the inclusion of a median subalgebra $X `-> Y$,
and the projections $\pi_i : \prod_j X_j -> X_i$ from a product of median algebras.

Note that \emph{a median homomorphism between median graphs need not be a graph homomorphism} (e.g., the inclusion of the subalgebra in \cref{fig:medgph-subalg}).
\end{definition}

Convex sets in median graphs are well-behaved, resembling the behavior in trees:

\begin{proposition}[see \cite[11.3.3]{bowditch:median}, \cite[2.14, 2.18]{CPTT:quasitree}]
\label{thm:med-proj}
Let $(X,G)$ be a median graph, $\emptyset \ne C \subseteq X$ be convex.
Then for every $x \in X$, there is a unique element of $C$, called the \defn{projection}
\footnote{The term \emph{gate} is also used in the literature.}
$\proj_C(x) \in C$
of $x$ onto $C$,
that is between $x$ and every element of $C$.
Moreover,
\begin{eqaligned*}
\proj_C : X &-->> C \subseteq X
\end{eqaligned*}
is an idempotent median homomorphism fixing $C$.
\end{proposition}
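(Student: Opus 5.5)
Fix $x \in X$. First I will produce a candidate for $\proj_C(x)$, then show it is unique, and finally show that the resulting map is a median homomorphism. For existence, I pick $c_0 \in C$ minimizing $d_G(x, c_0)$; this is possible because $C \ne \emptyset$ and distances take values in $\#N$. I claim $c_0 \in [x,c]$ for every $c \in C$. Given $c \in C$, set $m := \ang{x, c_0, c}$. Then $m \in [c_0, c] \subseteq C$ by convexity, and $m \in [x, c_0]$, so
\[
d_G(x,m) = d_G(x,c_0) - d_G(m,c_0).
\]
Minimality of $c_0$ then forces $d_G(m, c_0) = 0$, so $m = c_0$. Hence $c_0 \in [x,c]$, as claimed.

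For uniqueness, suppose $p, p' \in C$ both lie between $x$ and every element of $C$. Then $p \in [x,p']$ and $p' \in [x,p]$. Adding the resulting distance equalities gives $d_G(p,p') = 0$, so $p = p'$. Idempotence, and the fact that $\proj_C$ fixes $C$, are then immediate: for $x \in C$, the point $x$ itself qualifies, since $x \in [x,c]$ trivially.

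The main step is the homomorphism property. I would first record an algebraic characterization: $p = \proj_C(x)$ if and only if $p \in C$ and $\ang{x,p,c} = p$ for all $c \in C$. Now take $x_1, x_2, x_3$, write $p_i = \proj_C(x_i)$, and set $q := \ang{p_1,p_2,p_3}$, which lies in $C$ because convex sets are subalgebras. It remains to show $q$ lies between $\ang{x_1,x_2,x_3}$ and every $c \in C$.

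To do this I plan to use the standard fact that median graphs embed as median subalgebras of products of two-point median algebras via halfspaces; equivalently, betweenness is detected by halfspaces. So it suffices to show that any halfspace $h$ containing both $\ang{x_1,x_2,x_3}$ and $c$ also contains $q$. Since $h$ contains the median, it contains at least two of the $x_i$, say $x_1$ and $x_2$. I will argue that each such $x_i$, together with $c \in h$, forces $p_i \in h$. Indeed, if $h$ separated $x_i$ from $p_i$, then because $p_i \in [x_i, c]$, the halfspace $h$ would have to contain both $x_i$ and $c$ while missing $p_i$, contradicting the convexity of $h$. Thus $p_1, p_2 \in h$, and therefore $q = \ang{p_1,p_2,p_3} \in h$, since halfspaces are convex and closed under majority.

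The main obstacle is that the halfspace separation property is not among the facts stated earlier in the paper. If I want to avoid it, I would instead try a direct argument: show $q \in [\ang{x_1,x_2,x_3}, c]$ using the median identities $\ang{x_i,p_i,c} = p_i$ together with the associativity axiom and the distributive law. Failing that, I would fall back on the cited references, which contain this step.
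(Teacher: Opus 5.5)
Your proof is correct. There is no argument in the paper to compare it with: the paper does not prove this proposition and only cites \cite[11.3.3]{bowditch:median} and \cite[2.14, 2.18]{CPTT:quasitree}. Your existence step is complete as written: you take a nearest point $c_0$, and then $m=\ang{x,c_0,c}\in C\cap[x,c_0]$ is forced to equal $c_0$ by minimality. Your uniqueness step and your proof of idempotence are also complete.

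For the homomorphism step, the obstacle you flag is not a real one. Half-space separation appears in the paper as \cref{thm:med-cvx-sep}. It is stated a little later, but it is cited independently, and the standard proof (Zorn's lemma in an arbitrary median algebra) uses no gate maps, so invoking it here is not circular.

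Moreover, you only need its point-separation clause. Suppose $z\notin[a,b]$. Then $w:=\ang{a,b,z}\ne z$, and any half-space $H$ with $w\in H\not\ni z$ must contain both $a$ and $b$, because $\chi_H(w)$ is the majority of $\chi_H(a),\chi_H(b),\chi_H(z)$. This shows that betweenness is detected by half-spaces, without even using convexity of intervals. Your majority argument then finishes the proof: two of the $x_i$ lie in $h$, so the corresponding $p_i\in[x_i,c]$ lie in $h$, so $q\in h$.

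An equivalent, slightly more symmetric packaging is also available. For each half-space $H$, the composite $\chi_H\circ\proj_C$ equals $\chi_H$ if $H$ crosses $C$ (by the same convexity argument) and is constant otherwise. In either case it is a median homomorphism to $2$, and point separation then shows that $\proj_C$ is a median homomorphism. Either way, you do not need to fall back on a direct equational argument or on the references.
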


\begin{example}[see \cite[3.2.2, 7.4.4]{bowditch:median}, \cite[2.15]{CPTT:quasitree}]
\label{ex:med-proj-interval}
In any median graph,
every interval $[x, y]$ is convex,
with $\proj_{[x, y]}(\blank) = \ang{x, y, \blank}$.
\end{example}

\begin{proposition}[see \cite[7.3.5]{bowditch:median}, \cite[2.20]{CPTT:quasitree}]
\label{thm:med-proj-proj}
Let $(X,G)$ be a median graph, $C, D \subseteq X$ be nonempty convex.
Then
\begin{eqaligned*}
C \cap D \ne \emptyset
&\iff  \proj_C(D) \cap D \ne \emptyset
\iff  \proj_C(D) \subseteq D  \\
&\iff  \proj_C \circ \proj_D = \proj_D \circ \proj_C = \proj_{C \cap D}.
\end{eqaligned*}
\end{proposition}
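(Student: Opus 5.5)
The plan is to prove the equivalences in the cycle $(1)\Rightarrow(3)\Rightarrow(2)\Rightarrow(1)$, and then $(1)\Leftrightarrow(4)$. Here (1) is $C \cap D \ne \emptyset$, (2) is $\proj_C(D) \cap D \ne \emptyset$, (3) is $\proj_C(D) \subseteq D$, and (4) is the commutation identity. The only tools needed are the defining gate property of projections from \cref{thm:med-proj}, convexity, and the triangle inequality for the path metric $d_G$.

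For $(1)\Rightarrow(3)$, fix $z \in C \cap D$ and $d \in D$. By \cref{thm:med-proj}, $\proj_C(d)$ lies between $d$ and every element of $C$; in particular $\proj_C(d) \in [d,z]$. Since $d, z \in D$ and $D$ is convex, $\proj_C(d) \in D$. For $(3)\Rightarrow(2)$, note that $D \ne \emptyset$, so $\proj_C(D)$ is a nonempty subset of $D$. For $(2)\Rightarrow(1)$, any point of $\proj_C(D) \cap D$ lies in $C$ (because $\proj_C$ maps into $C$) and also in $D$.

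For $(4)\Rightarrow(1)$, I read (4) as asserting in particular that $\proj_C \circ \proj_D = \proj_D \circ \proj_C$; this avoids presupposing that $\proj_{C\cap D}$ is defined. Given any $x \in X$, the point $\proj_C(\proj_D(x))$ lies in $C$. By the commutation it equals $\proj_D(\proj_C(x))$, which lies in $D$. Hence $C \cap D \ne \emptyset$.

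The substantive step is $(1)\Rightarrow(4)$. Assume $C \cap D \ne \emptyset$; it is convex as an intersection of convex sets, so $\proj_{C\cap D}$ exists. Fix $x$ and put $y = \proj_D(x)$ and $p = \proj_C(y)$. By $(1)\Rightarrow(3)$ we have $p \in D$, and trivially $p \in C$, so $p \in C \cap D$. To identify $p$ with $\proj_{C\cap D}(x)$ by uniqueness in \cref{thm:med-proj}, it suffices to show $p \in [x,w]$ for every $w \in C \cap D$. We know $y \in [x,w]$ (since $w \in D$) and $p \in [y,w]$ (since $w \in C$). So I need the transitivity of betweenness:
\begin{eqaligned*}
d_G(x,p) + d_G(p,w) \le d_G(x,y) + d_G(y,p) + d_G(p,w) = d_G(x,y) + d_G(y,w) = d_G(x,w),
\end{eqaligned*}
and the reverse inequality is the triangle inequality, so equality holds and $p \in [x,w]$. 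Thus $\proj_C \circ \proj_D = \proj_{C \cap D}$. Exchanging the roles of $C$ and $D$ (the hypothesis (1) is symmetric) gives $\proj_D \circ \proj_C = \proj_{C\cap D}$. I expect no serious obstacle; the only point needing care is this transitivity of betweenness, which the metric computation above handles directly.
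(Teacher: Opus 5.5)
Your proof is correct and complete. The cycle $(1)\Rightarrow(3)\Rightarrow(2)\Rightarrow(1)$ follows directly from the gate property in \cref{thm:med-proj} together with convexity of $D$. Reading (4) as containing the commutation identity correctly handles the fact that $\proj_{C\cap D}$ only makes sense once $C\cap D\ne\emptyset$. Your metric check of transitivity of betweenness is exactly what is needed to identify $\proj_C(\proj_D(x))$ as the gate of $x$ in $C\cap D$.

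The paper gives no proof of its own here; it only cites Bowditch and Chen--Poulin--Tao--Tserunyan, so there is no in-paper argument to compare against. One small remark: transitivity of betweenness also holds in abstract median algebras, since intervals are convex. So your argument carries over to gated convex sets there, without any metric.
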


\begin{proposition}
\label{thm:medgph-psproj}
Let $(X,G)$ be a median graph, $\emptyset \ne Y \subseteq X$ be a median subalgebra.
Then for every $x \in X$, there are only finitely many $y \in Y$ such that $[x, y] \cap Y = \set{y}$.
We call the set of these $y$ the \defn{pseudoprojection of $x$ onto $Y$}, denoted $\psproj_Y(x) \subseteq Y$.
\end{proposition}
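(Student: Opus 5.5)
The plan is to fix a basepoint $y_0 \in Y$ and to show that the map
\begin{eqaligned*}
\psproj_Y(x) &--> [x, y_0], \qquad y |--> \ang{x, y, y_0}
\end{eqaligned*}
is injective. Since $X$ is interval-finite (\cref{thm:medalg-medgph}), the target $[x,y_0]$ is finite, so $\psproj_Y(x)$ is finite. That the map lands in $[x,y_0]$ is immediate from the definition of the interval in \cref{def:medalg-interval}. All the work is therefore in the injectivity.

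For injectivity, suppose $y, y' \in \psproj_Y(x)$ satisfy $\ang{x,y,y_0} = \ang{x,y',y_0} =: m$. Put $w := \ang{y_0, y, y'}$. Then $w \in Y$ because $Y$ is a median subalgebra. I will show $w \in [x,y]$. Since $w \in Y$ and $[x,y] \cap Y = \set{y}$, this forces $w = y$. The argument is symmetric in $y$ and $y'$, so it also gives $w = y'$, and hence $y = y'$.

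To show $w \in [x,y]$, I would use the hyperplane (half-space) description of betweenness in a median graph, from the preliminaries on median graphs: $w \in [x,y]$ iff no hyperplane separates $w$ from both $x$ and $y$. Suppose some hyperplane $h$ does. A median lies on the side of $h$ containing the majority of its three arguments. The half-space $H$ of $h$ containing $w$ omits $y$, so it must contain $y_0$ and $y'$. Its complement contains $x$ and $y$, and therefore contains $m = \ang{x,y,y_0}$, since two of the three arguments lie there. On the other hand, $m = \ang{x, y', y_0}$ has two arguments $y', y_0$ in $H$, so $m \in H$. This is a contradiction, so $w \in [x,y]$.

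The main point needing care is the betweenness/hyperplane fact for median graphs: that $z \in [a,b]$ iff no hyperplane separates $z$ from both $a$ and $b$, and that medians follow the majority side of each hyperplane. If one prefers to avoid hyperplanes, the same step can be done purely algebraically. One would check $\ang{x,y,w} = w$ using the associativity axiom of \cref{def:medalg} together with the hypothesis $\ang{x,y,y_0} = \ang{x,y',y_0}$. I would first try the hyperplane argument, since it is transparent, and fall back on the equational manipulation only if the needed hyperplane facts are not yet available at this point in the paper.
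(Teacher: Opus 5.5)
Your proof is correct and is essentially the paper's argument. The paper also proves that $\proj_{[x,y_0]} = \ang{x,y_0,\blank}$ is injective on $\psproj_Y(x)$ by showing that $\ang{y_0,y,y'} \in Y$ lies in $[x,y]$ (and symmetrically in $[x,y']$), so it equals both $y$ and $y'$; the only difference is that the paper justifies $\ang{y_0,y,y'} \in [x,y]$ via \cref{thm:med-proj-proj} using $m \in [x,y] \cap [y_0,y']$, whereas your half-space majority argument relies on \cref{thm:med-cvx-sep} and on indicator functions of half-spaces being median homomorphisms, both of which are available in the preliminaries.
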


See \cref{fig:medgph-subalg:psproj}.
Note that if $y \in Y$ minimizes the distance $d_G(x, y) = d_G(x, Y)$,
then clearly $y \in \psproj_Y(x)$.
Thus in particular, $\psproj_Y(x) \ne \emptyset$.

\begin{proof}[qedof=\cref{thm:medgph-psproj}]
Fix any $y \in Y$.
It suffices to show that the map
$\proj_{[x,y]} : \psproj_Y(x) -> [x, y]$
is injective.

\begin{claim*}
Let $a \in \psproj_Y(x)$ and $b \in Y$.
Then
\begin{eqaligned*}
\proj_{[x,y]}(b) \in [x, \proj_{[x,y]}(a)]  \implies  a \in [y, b].
\end{eqaligned*}
\end{claim*}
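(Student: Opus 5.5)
The plan is to reduce the claim to the defining property of $\psproj_Y(x)$. Consider the element $m := \ang{a,y,b}$. Since $a, y, b \in Y$ and $Y$ is a median subalgebra, $m \in Y$. By \cref{def:medalg-interval}, $m \in [a,y] \cap [a,b] \cap [y,b]$. So it suffices to show that $m \in [x,a]$. Then $m \in [x,a] \cap Y = \set{a}$, hence $a = m \in [y,b]$, which is the claim.

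To show $m \in [x,a]$, I would argue one hyperplane at a time. I would use the standard fact that in a median graph each half-space $H$ (a convex set with convex complement) yields a median homomorphism $\chi_H : X \to \set{0,1}$. Here $\set{0,1}$ carries the majority operation. I would also use that betweenness is detected by these maps: $z \in [u,v]$ iff $\chi_H(z) \in [\chi_H(u),\chi_H(v)]$ for every half-space $H$. This follows from the path metric counting separating hyperplanes; see \cite{bowditch:median}, \cite{CPTT:quasitree}.

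Fix a half-space $H$ and write $\bar z := \chi_H(z)$. Suppose $\bar m \notin [\bar x, \bar a]$. In $\set{0,1}$ this forces $\bar x = \bar a$ and $\bar m \neq \bar a$; without loss of generality $\bar x = \bar a = 0$ and $\bar m = 1$. Since $\bar m = \mathrm{maj}(\bar a,\bar y,\bar b)$ and $\bar a = 0$, we get $\bar y = \bar b = 1$. It follows that $\overline{\proj_{[x,y]}(a)} = \mathrm{maj}(\bar x,\bar y,\bar a) = 0$ and $\overline{\proj_{[x,y]}(b)} = \mathrm{maj}(0,1,1) = 1$. By \cref{ex:med-proj-interval}, $\proj_{[x,y]}(\blank) = \ang{x,y,\blank}$. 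Now the hypothesis $\proj_{[x,y]}(b) \in [x,\proj_{[x,y]}(a)]$ is preserved by the homomorphism $\chi_H$ (\cref{def:med-homom}), so $1 \in [0,0] = \set{0}$, a contradiction. Therefore $\bar m \in [\bar x,\bar a]$ for every $H$, and so $m \in [x,a]$.

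With the claim in hand, the proposition follows. Suppose $a, a' \in \psproj_Y(x)$ satisfy $\proj_{[x,y]}(a) = \proj_{[x,y]}(a')$. Applying the claim with $b = a'$ gives $a \in [y,a']$, and applying it symmetrically gives $a' \in [y,a]$. Hence $a = \ang{y,a',a} = a'$, since $a = \ang{y,a',a}$ and $a' = \ang{y,a,a'}$ are the same median. So $\proj_{[x,y]}$ is injective on $\psproj_Y(x)$. Because $[x,y]$ is finite by interval-finiteness (\cref{thm:medalg-medgph}), $\psproj_Y(x)$ is finite.

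The main obstacle is the step justifying the half-space separation and detection of betweenness. If I wanted to avoid citing it, I would instead verify the identity $\ang{x,a,m} = m$ directly from the median axioms, using the hypothesis in the form $\ang{x,\ang{x,y,a},\ang{x,y,b}} = \ang{x,y,b}$. That route is more error-prone, so I would cite the embedding into a product of copies of $\set{0,1}$ instead.
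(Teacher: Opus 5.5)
Your proof is correct. Its outer structure matches the paper's: both reduce the claim to showing $\ang{y,b,a} \in [x,a]$, and then conclude from $\ang{y,b,a} \in Y \cap [x,a] = \set{a}$. The two proofs differ only in how that middle step is established.

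The paper argues geometrically. Since $\proj_{[x,y]}(a) \in [x,a]$ and $\proj_{[x,y]}(b) \in [y,b]$, the hypothesis together with convexity of $[x,a]$ puts $\proj_{[x,y]}(b)$ in $[x,a] \cap [y,b]$. Because these two convex sets meet, \cref{thm:med-proj-proj} gives $\proj_{[y,b]}([x,a]) \subseteq [x,a]$, and in particular $\ang{y,b,a} = \proj_{[y,b]}(a) \in [x,a]$.

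You instead check betweenness one half-space at a time, reducing to a case computation in the two-element median algebra. For the converse direction of ``betweenness is detected by half-spaces'' you do not need the distance formula. It follows directly from convex separation (\cref{thm:med-cvx-sep}) applied to $\set{m}$ and the convex interval $[x,a]$.

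The paper's route is shorter once the projection lemma is available, and it gives a geometric reason for the claim: the intervals $[x,a]$ and $[y,b]$ intersect. Your route is more mechanical and uses only half-space separation rather than \cref{thm:med-proj-proj}. It also works verbatim in any median algebra. In effect it is an instance of a general principle: an implication between median identities, such as this one, holds in every median algebra as soon as it holds in $\{0,1\}$, since every median algebra embeds in a power of $\{0,1\}$.
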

\begin{proof}[of=Claim]
We have
$\proj_{[x,y]}(a) \in [x,a]$
by definition of $\proj_{[x,y]}$,
and likewise
$\proj_{[x,y]}(b) \in [y,b]$;
thus
$\proj_{[x,y]}(b) \in [x, \proj_{[x,y]}(a)]
\implies  \proj_{[x,y]}(b) \in [x,a] \cap [y,b]
\implies  \ang{y, b, a} = \proj_{[y,b]}(a) \in [x,a]$
by \cref{thm:med-proj-proj},
whence $a = \ang{y, b, a}$ since $\ang{y, b, a} \in Y$ and $a \in \psproj_Y(x)$.
\end{proof}

If now $a, b \in \psproj_Y(x)$ with $\proj_{[x,y]}(a) = \proj_{[x,y]}(b)$,
then applying the claim twice yields $a \in [y,b]$ and $b \in [y,a]$,
whence $a = b$.
\end{proof}

\begin{remark}
The proof above shows that the cardinality of $\psproj_Y(x)$ is bounded by
$\min_{y \in Y} \abs{[x, y]}$,
which is in turn bounded by $2^{d_G(x, Y)}$
(by a half-space separation argument; see \cref{thm:med-cvx-sep} below and \cite[2.32]{CPTT:quasitree}).
\end{remark}

\begin{definition}
\label{def:med-half}
A \defn{half-space} in a median algebra $X$ is a convex, coconvex subset $H \subseteq X$.

Equivalently, $H$ is a half-space iff its indicator function $X -> 2$ is a median homomorphism, where $2 = \set{0,1}$ is the unique two-element median algebra (or the single edge graph).

We write $\Half(X) \subseteq 2^X$ for the set of half-spaces of $X$, including the \defn{trivial} half-spaces $\emptyset, X$.

Note that the complement $\neg H := X \setminus H$ of a half-space $H$ is a half-space.
\end{definition}

\begin{remark}
For a median homomorphism $f : X -> Y$ between median algebras, preimage $f^{-1}$ is easily seen to preserve half-spaces.
(Conversely, if $f^{-1}$ preserves half-spaces then $f$ is a median homomorphism, as can be seen using the following result.)
\end{remark}

\begin{proposition}[see \cite[8.1.2]{bowditch:median}, \cite[2.33]{CPTT:quasitree}]
\label{thm:med-cvx-sep}
Any two disjoint convex sets in a median algebra may be separated by a half-space.

In particular, any two distinct vertices are separated by a half-space.
\end{proposition}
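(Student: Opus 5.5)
The plan is to go through prime ideals, the standard route for median algebras (cf.\ the Birkhoff--Kiss / Isbell approach, or distributive-lattice separation theorems). Fix disjoint nonempty convex sets $C, D \subseteq X$; if one of them is empty, a trivial half-space does the job. Consider the family $\mathcal{P}$ of convex sets $A \subseteq X$ with $C \subseteq A$ and $A \cap D = \emptyset$. It contains $C$, and it is closed under unions of chains, since an increasing union of convex sets is convex: any $x,y$ in the union lie in a common member. By Zorn's lemma we pick a maximal $A \in \mathcal{P}$. Next, among convex sets $B \supseteq D$ disjoint from $A$, we again pick a maximal $B$. The goal is to show that $B = X \setminus A$. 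Then $A$ is convex and coconvex, hence a half-space containing $C$ and disjoint from $D$.

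The key tool will be the join of convex sets. For convex $A$ and a point $p$, the convex hull of $A \cup \set{p}$ is $\bigcup_{a \in A} [a,p]$. This is a standard lemma that follows from the axioms: one checks that the union is convex using associativity, via the ``interval transitivity'' identities $[a,\ang{a,b,c}] \subseteq [a,b]$ and similar. I would first establish this lemma, since it is really just a computation with the median axioms.

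Now suppose some $p \notin A \cup B$. By maximality of $B$, the hull of $B \cup \set{p}$ meets $A$: there are $b \in B$ and $a \in A$ with $a \in [b,p]$. By maximality of $A$ (applied with $B$ in place of $D$, which is fine since $A$ is also maximal disjoint from $B \supseteq D$; if needed, refine the choice by first maximizing $A$ against $D$ and then rechecking), the hull of $A \cup \set{p}$ meets $B$: there are $a' \in A$ and $b' \in B$ with $b' \in [a',p]$. Put $m = \ang{a,a',p}$ or a similar median. The four-point argument then shows that $\ang{a,b',p}$ lies both in $[a,a'] \subseteq A$ (or in the hull of $A$) and in $[b,b'] \subseteq B$, contradicting $A \cap B = \emptyset$. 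Concretely, I would take $q = \ang{a, b', p}$ and verify $q \in [a,a']$ and $q \in [b,b']$ using $a \in [b,p]$, $b' \in [a',p]$ and the associativity axiom. This median calculation is the main obstacle. If it resists, I would fall back on Bowditch's \cite[8.1.2]{bowditch:median} proof as cited, or reduce to the finitely generated case. There, $\ang{\cdot}$ restricted to the subalgebra generated by finitely many points is a finite median graph, and one can separate using hyperplanes; a compactness argument (Tychonoff on $2^X$) then gives the general case.

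Finally, the ``in particular'' statement follows by taking $C = \set{x}$ and $D = \set{y}$, which are convex by idempotence since $[x,x] = \set{x}$.
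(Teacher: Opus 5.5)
Your outline (Zorn's lemma, the join lemma that the hull of $A\cup\{p\}$ is $\bigcup_{a\in A}[a,p]$, and a contradiction at a point $p\notin A\cup B$) is the standard argument behind the results the paper cites; the paper itself gives no proof. Your preliminary steps are fine, and no ``refinement'' of $A$ is needed. The hull of $A\cup\{p\}$ is convex, contains $C$, and is strictly larger than $A$, so by maximality it meets $D\subseteq B$.

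The decisive step, which you flag as the main obstacle, is wrong as written. From $a\in[b,p]$ and $b'\in[a',p]$ you cannot derive $\ang{a,b',p}\in[a,a']\cap[b,b']$. In the line $\#Z$, take $p=0$, $b=10$, $a=4$, $a'=5$, $b'=3$. Both betweenness relations hold, but $\ang{a,b',p}=3\notin[4,5]=[a,a']$. With $a=2$, $b=3$, $b'=4$, $a'=10$, the other membership fails instead. So no manipulation of the axioms will verify your claim. Your $q$ is the median of the two \emph{inner} points $a,b'$, and is only guaranteed to lie in $[a,b']$, which gives nothing.

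The fix is to use the two \emph{outer} points, $q:=\ang{a',b,p}$. The tool is the gate property of intervals: if $y\in[u,v]$, then $\ang{u,v,x}\in[x,y]$ for every $x$. This is a standard consequence of the axioms, e.g.\ from the distributive identity $\ang{\ang{x,y,z},u,v}=\ang{\ang{x,u,v},\ang{y,u,v},z}$ with $z=x$; in the graph case it is \cref{ex:med-proj-interval}.
\begin{itemize}
\item Applying it to $a\in[b,p]$ with $x=a'$ gives $q=\ang{b,p,a'}\in[a',a]\subseteq A$.
\item Applying it to $b'\in[a',p]$ with $x=b$ gives $q=\ang{a',p,b}\in[b,b']\subseteq B$.
\end{itemize}
Together these contradict $A\cap B=\emptyset$. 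With this correction, plus an actual proof of the join lemma (standard, but a real median computation rather than a one-liner), your argument is complete.

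Your fallback through finite subalgebras and Tychonoff also works. However, it rests on the nontrivial theorem that finitely generated median algebras are finite, and on the finite case of the statement itself. It is therefore much heavier than necessary.
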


\begin{proposition}[see \cite[11.5.1]{bowditch:median}, \cite[2.23--25]{CPTT:quasitree}]
\label{thm:medgph-half}
Let $(X,G)$ be a median graph.
\begin{enumerate}[alph]
\item
For any $(x,y) \in G$, there is a unique half-space $H$ with $x \in H \not\ni y$, namely
$H := \proj_{\set{x,y}}^{-1}(x)$.
\item
Conversely, any half-space $H$ is equal to $\proj_{\set{x,y}}^{-1}(x)$ for any $(x,y) \in G$ with $x \in H \not\ni y$.
\end{enumerate}
Thus, we have a surjection
\begin{eqaligned*}
G &-->> \Half(X) \setminus \set{\emptyset, x}  \\
(x,y) &|--> \proj_{\set{x,y}}^{-1}(x),
\end{eqaligned*}
which descends to a bijection between nontrivial half-spaces and equivalence classes of oriented edges (namely their edge boundaries).
\end{proposition}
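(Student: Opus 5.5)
The plan is to prove (a) and (b) together by one direct argument, using only \cref{ex:med-proj-interval}, \cref{thm:med-proj} and the definition of half-spaces (\cref{def:med-half}). Fix $(x,y) \in G$. Since $x \ne y$ and $[x,y] = \set{x,y}$ (by \cref{thm:medalg-medgph}), the set $\set{x,y}$ is an interval. Hence it is convex, and by \cref{ex:med-proj-interval} its projection is $\proj_{\set{x,y}}(z) = \ang{x,y,z}$. By \cref{thm:med-proj}, this is a median homomorphism onto $\set{x,y}$, and $\set{x,y}$ is just the two-element median algebra $2$. Therefore $H_0 := \proj_{\set{x,y}}^{-1}(x)$ has indicator function equal to a median homomorphism $X \to 2$, so $H_0$ is a half-space. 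It contains $x$ and does not contain $y$ because the projection fixes $\set{x,y}$. This gives existence in (a).

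The main step is uniqueness: any half-space $H$ with $x \in H \not\ni y$ equals $H_0$. This single fact gives both the uniqueness in (a) and the whole of (b). Let $z \in X$ and put $m := \ang{x,y,z} \in \set{x,y}$. There are two cases.
\begin{enumerate}
\item If $m = x$, then $x \in [y,z]$. If $z \notin H$, then $y, z \in \neg H$, and $\neg H$ is convex, so $x \in \neg H$, a contradiction. Hence $z \in H$.
\item If $m = y$, then $y \in [x,z]$. If $z \in H$, then convexity of $H$ gives $y \in H$, a contradiction. Hence $z \notin H$.
\end{enumerate}
Together the two cases show $H = H_0$. I expect no real obstacle here. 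The only point needing care is to use the median algebra interval $[y,z]$ (fixed points of $\ang{y,z,\blank}$) together with convexity of both $H$ and $\neg H$, rather than any graph-metric argument.

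For the final bijection, I would argue in three steps.
\begin{enumerate}
\item The map $(x,y) \mapsto \proj_{\set{x,y}}^{-1}(x)$ lands in the nontrivial half-spaces, since the image contains $x$ and omits $y$.
\item It is surjective. Given a nontrivial $H$, pick $a \in H$ and $b \notin H$, and take a path $a = x_0 \mathrel{G} x_1 \mathrel{G} \dotsb \mathrel{G} x_n = b$, which exists since median graphs are connected. Some consecutive pair satisfies $x_i \in H \not\ni x_{i+1}$, and by (b) that edge maps to $H$.
\item By (a) and (b), two oriented edges have the same image iff each lies in the edge boundary of the other's half-space, i.e., iff both go from $H$ to $\neg H$ for the same $H$. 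So the fibres of the map are exactly the edge boundaries $\set{(x,y) \in G | x \in H \not\ni y}$, and the map descends to the asserted bijection between nontrivial half-spaces and these equivalence classes of oriented edges.
\end{enumerate}
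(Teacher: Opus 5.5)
Your proof is correct. The paper gives no proof of its own for this proposition and simply cites Bowditch and CPTT, so the useful comparison is with that. You derive the result directly from facts the paper records earlier.
\begin{itemize}
\item The set $\set{x,y} = [x,y]$ is convex, with projection $z \mapsto \ang{x,y,z}$ (\cref{ex:med-proj-interval}).
\item This projection is a median homomorphism onto a two-element median algebra, which is necessarily $2$ (\cref{thm:med-proj}).
\item Half-spaces are exactly the preimages of half-spaces of $2$ under such homomorphisms (\cref{def:med-half}).
\end{itemize}
Existence in (a) follows immediately. Your two-case uniqueness argument is exactly what is needed and gives (a) and (b) together. It uses convexity of both $H$ and $\neg H$ on the median-algebra intervals $[y,z]$ and $[x,z]$, with $m = \ang{x,y,z} \in [x,y] = \set{x,y}$. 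The surjectivity step is fine: a path from $H$ to $\neg H$ must cross an edge, and median graphs are connected, which is implicit in the paper's definition via the path metric. The fibre computation, identifying the fibres with oriented edge boundaries, is also fine. You also correctly read the typo $\set{\emptyset, x}$ in the statement as $\set{\emptyset, X}$. Your route is self-contained, and its uniqueness step is purely median-algebraic with no metric reasoning, which suits the paper's algebraic viewpoint. Citing only buys the paper brevity for a standard fact.
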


\begin{definition}
\label{def:medgph-hyp}
An \defn{oriented hyperplane} in a median graph is an equivalence class of oriented edges lying on the (oriented) boundary of the same nontrivial half-space, as above.

An \defn{unoriented hyperplane} is the corresponding equivalence class of unoriented edges, or equivalently (see \cref{sec:prelim-graph}), an edge-flip-invariant equivalence class of oriented edges lying on the boundaries of a complementary pair of half-spaces $H, \neg H$.

A median graph is \defn{hyperplane-finite} if each of its (oriented or unoriented) hyperplanes is finite.
\end{definition}

\begin{example}
\label{ex:medgph-hyp-tree}
Oriented hyperplanes in trees $(X,T)$ are single edges $(x,y) \in T$.
The corresponding half-space $H$ with $x \in H \not\ni y$ is the component of $x$ in the subgraph $G \setminus \set{(x,y), (y,x)}$.

Thus, hyperplane-finite median graphs may be thought of as ``thickened trees''
(this was a key idea underlying \cite{CPTT:quasitree}).
\end{example}

\begin{example}
\label{ex:med-half-prod}
In a finite product of median algebras (see \cref{def:med-prod})
$X_1 \times \dotsb \times X_n$,
the half-spaces are precisely all $\pi_i^{-1}(H)$ for $H \in \Half(X_i)$, where $\pi_i$ is the $i$th projection map.

For median graphs, this is easily seen from the definition of the box product graph (\cref{def:med-prod}), in which each edge corresponds to an edge from precisely one factor graph.
\end{example}

\begin{example}
\label{ex:med-half-subalg}
In a median subalgebra $Y \subseteq X$, half-spaces are precisely the restrictions $Y \cap H$ of half-spaces $H$ in $X$.
(This follows from the convex separation principle from \cref{thm:med-cvx-sep}.)

Note that several half-spaces in $X$ may restrict to the same half-space in $Y$; see \cref{fig:medgph-subalg:hyp}.
\end{example}

\begin{definition}
\label{def:med-half-nested}
Two half-spaces $H, K \subseteq X$ in a median algebra $X$ are \defn{nested}
if one of $H, \neg H$ is contained in one of $K, \neg K$,
or equivalently,
one of $H, \neg H$ is disjoint from one of $K, \neg K$.

The four intersections
$H \cap K, \neg H \cap K, H \cap \neg K, \neg H \cap \neg K$
are called the \defn{corners} of $H, K$.
\end{definition}

\begin{lemma}
\label{thm:med-half-nested-generator}
If two half-spaces $H, K \subseteq X$ in a median algebra are non-nested, and $X$ is generated (as a median algebra) by a subset $B \subseteq X$, then all four corners of $H, K$ contain an element of $B$.
\end{lemma}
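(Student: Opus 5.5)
We argue by contraposition. Fix non-nested half-spaces $H, K$, and suppose some corner, say $H \cap K$ after possibly replacing $H$ by $\neg H$ and/or $K$ by $\neg K$, contains no element of $B$. Then $B \subseteq \neg H \cup \neg K$. The aim is to show that $\neg H \cup \neg K$ is a median subalgebra of $X$. Since $B$ generates $X$, this forces $X = \neg H \cup \neg K$. Then $H \cap K = \emptyset$, i.e.\ $H \subseteq \neg K$, which says $H$ and $K$ are nested (\cref{def:med-half-nested}), a contradiction.

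The key step is that the union of two half-spaces is closed under the median operation. Equivalently, its complement $H \cap K$ is closed under a ``median of two from three'' condition. Take $x, y, z \in \neg H \cup \neg K$ and suppose, for contradiction, that $m := \ang{x,y,z} \in H \cap K$.

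We use the indicator description from \cref{def:med-half}. The indicator $\chi_H : X \to 2$ is a median homomorphism, so $\chi_H(m)$ is the majority value of $\chi_H(x), \chi_H(y), \chi_H(z)$. Since $m \in H$, at least two of $x, y, z$ lie in $H$. Likewise, since $m \in K$, at least two of them lie in $K$. By pigeonhole among three points, some one of $x, y, z$ lies in both $H$ and $K$, i.e.\ in $H \cap K$. This contradicts the assumption that $x, y, z \in \neg H \cup \neg K$. Hence $m \in \neg H \cup \neg K$, and the union is a subalgebra.

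The argument is essentially routine. The only point needing care is the reduction to a single corner by the symmetric roles of $H, \neg H, K, \neg K$, which is harmless because complements of half-spaces are half-spaces. There is also a degenerate case, where $H$ or $K$ is trivial. Trivial half-spaces are automatically nested with everything, so they are excluded by hypothesis anyway.
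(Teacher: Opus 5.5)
Your proof is correct and takes essentially the same approach as the paper. Both show that the union of three corners (here $\neg H \cup \neg K$) is a median subalgebra, so it cannot contain the generating set $B$ unless the fourth corner is empty. The only difference is cosmetic: the paper pulls back along the indicator homomorphism $X \to 2 \times 2$, while you check closure directly with the majority/pigeonhole count, which is the same computation.
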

\begin{proof}
The union of any three corners is a median subalgebra of $X$,
because e.g., this is evidently true in a square graph $2 \times 2$, and the indicator functions of $H, K$ yield a homomorphism $X -> 2 \times 2$ under which preimage preserves median subalgebras.
Thus if three corners contained all of $B$, then the fourth corner would be empty, meaning $H, K$ are nested.
\end{proof}

\begin{proposition}[see \cite[2.49]{CPTT:quasitree}]
\label{thm:medgph-hyp-fin-nested}
A median graph is hyperplane-finite iff each of its half-spaces is non-nested with only finitely many other half-spaces.
\end{proposition}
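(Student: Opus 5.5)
We prove both directions by relating non-nestedness of half-spaces to crossing of hyperplanes. The basic tool is the bijection of \cref{thm:medgph-half} between nontrivial half-spaces and oriented hyperplanes. The key intermediate fact is: if two nontrivial half-spaces $H, K$ are non-nested, then the hyperplanes of $H$ and $K$ cross. This means there is a square in $X$ two of whose opposite edges lie on the boundary of $H$ and two on the boundary of $K$. Trivial half-spaces $\emptyset, X$ are nested with everything, so they can be ignored throughout.

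For the crossing fact, we pick points $a \in H\cap K$, $b\in \neg H\cap K$, $c\in H\cap\neg K$, $d\in\neg H\cap\neg K$ and look at a minimal configuration. Take a geodesic from a point of $K$ to a point of $\neg K$. It crosses $\partial K$ along some edge. Using projections onto the convex sets $K, \neg K, H, \neg H$ (\cref{thm:med-proj}, \cref{thm:med-proj-proj}), we can find an edge $(x,y)$ on $\partial K$ with $x,y\in H$, and another edge $(x',y')$ on $\partial K$ with $x', y' \in \neg H$. Concretely, we project onto the convex set $H$. Since $H\cap K\ne\emptyset$ and $H \cap \neg K \ne\emptyset$, the set $H$ contains edges of $\partial K$. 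Moreover, $\partial K$ is connected by ladders of squares: the carrier $\partial K$ is isomorphic to the convex "hyperplane carrier" $P\times\{0,1\}$, where $P=\proj_K(\neg K)$ is convex. Because $P$ is convex, hence connected, and meets both $H$ and $\neg H$, there is an edge of $P$ crossing $\partial H$. That edge together with its copy on the other side gives the desired square. This standard carrier structure is the step I expect to need the most care, though it is classical (cf.\ \cite{CPTT:quasitree}).

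($\Leftarrow$) Suppose every half-space is non-nested with only finitely many others, and suppose for contradiction that some hyperplane $\hat H$ is infinite. The carrier $P$ above is an infinite connected median graph, since $P\cong$ the set of $\neg H$-endpoints of $\hat H$. An infinite connected locally... Actually we do not need local finiteness. Any infinite connected median graph has infinitely many nontrivial half-spaces. The reason is that finitely many half-spaces separate only finitely many points: by \cref{thm:med-cvx-sep}, distinct points are separated by some half-space, and points are determined by which of the finitely many half-spaces contain them. Each half-space $J$ of $P$ extends to a half-space of $X$ crossing $H$, namely the half-space of an edge of $P$ crossing $\partial J$ (\cref{thm:medgph-half}). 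This extension is injective, since $P$ is convex and the extension restricts back to $J$. Each extension is non-nested with $H$ because all four corners are realized via the square. This contradicts the assumption.

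($\Rightarrow$) Suppose $X$ is hyperplane-finite and fix $H$ with finite hyperplane $\hat H$. By the crossing fact, every $K$ non-nested with $H$ has a boundary edge that is a rung of a square containing an edge of $\hat H$. Since $\hat H$ has finitely many edges, and each edge lies in squares whose other opposite pair lies on $\partial K$, the set of such $K$ is bounded by the number of squares containing edges of $\hat H$. Local finiteness is not given, so we argue instead: such a $K$ restricts to a nontrivial half-space of the finite carrier $P$. This restriction is injective, because $K$ is recovered as the half-space of an edge of $P$ by \cref{thm:medgph-half}. A finite set has finitely many subsets, so there are only finitely many such $K$.
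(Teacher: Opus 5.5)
Your proof is correct. The paper gives no argument of its own for this proposition; it only cites \cite[2.49]{CPTT:quasitree}. So there is no in-text proof to compare against, and your route through the carrier $P_H := \proj_H(\neg H)$ (the $H$-endpoints of $\hat H$) is the natural one. Two remarks would tighten it.

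First, the carrier facts you flag as delicate follow quickly from \cref{thm:med-proj}.
\begin{itemize}
\item Each $p \in P_H$ has a unique $\neg H$-neighbor, namely $\proj_{\neg H}(p)$. Hence $P_H$ is in bijection with the edges of $\hat H$. This bijection, rather than ``$P \cong$ the set of $\neg H$-endpoints'', is why $P_H$ is infinite in ($\Leftarrow$).
\item $P_H$ is convex. Let $y_1, y_2 \in P_H$ have $\neg H$-neighbors $y_1', y_2'$, and let $z \in [y_1, y_2]$. Then $\ang{y_1', y_2', z} \in [y_1', y_2'] \subseteq \neg H$, while $\proj_H(\ang{y_1', y_2', z}) = \ang{y_1, y_2, z} = z$, so $z \in P_H$.
\item The ladder isomorphism holds because $\proj_{\neg H}$ and $\proj_H$ restrict to mutually inverse median homomorphisms between the two convex sides. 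Median isomorphisms between convex sets preserve edges.
\end{itemize}

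Second, the crossing square is more than you need. For a half-space $K$, being non-nested with $H$ is equivalent to $K \cap P_H$ being a nontrivial subset of $P_H$.
\begin{itemize}
\item Forward: a path inside the convex set $K$ (resp.\ $\neg K$) from $H$ to $\neg H$ crosses $\hat H$ at an edge whose $H$-endpoint lies in $P_H$.
\item Backward: take $u \in P_H \cap K$ and $v \in P_H \setminus K$ together with their $\neg H$-neighbors. These four points fill the four corners, since by \cref{thm:medgph-half}(b) any half-space separating $u$ from its $\neg H$-neighbor (or $v$ from its $\neg H$-neighbor) is $H$ or $\neg H$, which $K$ is not.
\end{itemize}
Moreover, $K \mapsto K \cap P_H$ is injective on such $K$, by \cref{thm:medgph-half}(b) applied to an edge of the connected set $P_H$ cut by $K$. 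Both directions therefore reduce to ``the convex set $P_H$ is finite iff it has finitely many half-spaces'', which is exactly what your two halves establish.
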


\begin{proposition}[see \cite[9.3.1]{bowditch:median}, \cite[2.28]{CPTT:quasitree}]
\label{thm:medgph-half-nested-tree}
A median graph is a tree iff its half-spaces are pairwise nested.
\end{proposition}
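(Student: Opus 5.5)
The plan is to prove the two directions separately. The forward direction uses the explicit description of half-spaces in trees from \cref{ex:medgph-hyp-tree}. For the converse I exhibit a non-nested pair of half-spaces whenever the median graph contains a cycle. Throughout I use \cref{thm:medgph-half}(a): an edge $(x,y)$ is separated by exactly one half-space (up to complement), namely the one it lies on the boundary of. Consequently, for any other half-space $K$, both endpoints of an edge lie on the same side of $K$.

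($\Rightarrow$) Let $(X,T)$ be a tree. By \cref{thm:medgph-half} and \cref{ex:medgph-hyp-tree}, every nontrivial half-space has the form $H = $ the component of $x$ in $T \setminus \set{(x,y),(y,x)}$ for some edge $(x,y)$; trivial half-spaces are nested with everything. Take a second nontrivial half-space $K$ coming from an edge $\set{u,v}$ distinct from $\set{x,y}$.
\begin{itemize}
\item The edge $\set{u,v}$ survives in $T \setminus \set{x,y}$, so $u,v$ both lie in $H$ or both lie in $\neg H$. Say both lie in $\neg H$.
\item Then $H$ is a connected subgraph of $T$ that avoids the edge $\set{u,v}$.
\item Hence $H$ lies inside one component of $T \setminus \set{u,v}$, i.e.\ $H \subseteq K$ or $H \subseteq \neg K$.
\end{itemize}
So $H$ and $K$ are nested. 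If both edges coincide, then $K \in \set{H,\neg H}$ and they are nested trivially.

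($\Leftarrow$) Suppose $(X,G)$ is a connected median graph that is not a tree. Then it contains a cycle $x_0 \mathrel{G} x_1 \mathrel{G} \dotsb \mathrel{G} x_n = x_0$ with distinct edges.

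Step 1: the hyperplane of $(x_0,x_1)$ contains a second edge of the cycle. Let $H$ be the half-space with $x_0 \in H \not\ni x_1$.
\begin{itemize}
\item Along the cycle, membership in $H$ changes exactly at the edges lying on the boundary of $H$.
\item Since the cycle returns to $x_0 \in H$, the number of such changes is even.
\item Hence some other cycle edge $\set{x',y'}$, with $x' \in H \not\ni y'$, lies in the same hyperplane as $\set{x,y} := \set{x_0,x_1}$.
\end{itemize}

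Step 2: $x \ne x'$. If $x = x'$, then $y \ne y'$. Since $H = \proj_{\set{x,y}}^{-1}(x)$ and $y' \notin H$, we get $\proj_{\set{x,y}}(y') = y$, so $y \in [x,y']$. This is impossible, since $d_G(x,y') = 1$ and $y \ne x, y'$.

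Step 3: find a half-space $K$ non-nested with $H$. Choose a geodesic from $x$ to $x'$; it stays inside $H$ because $H$ is convex. Let $(x,z)$ be its first edge and $K$ the half-space with $x \in K \not\ni z$. Check the four corners:
\begin{itemize}
\item $x \in H \cap K$.
\item $z \in H \cap \neg K$.
\item $y \in \neg H \cap K$: $y$ lies across $H$ from $x$, and the edge $(x,y)$ does not cross $K$ because its separating half-space is $H \ne K$.
\item $x' \in \neg K$: otherwise convexity of $K$ gives $z \in [x,x'] \subseteq K$.
\item $y' \in \neg H \cap \neg K$: the edge $(x',y')$ crosses only $H$, so $y'$ is on the same side of $K$ as $x'$.
\end{itemize}
All four corners of $H$ and $K$ are nonempty, so $H$ and $K$ are not nested, which is the contrapositive of ($\Leftarrow$).

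The main obstacle is Step 1 together with the "each edge crosses exactly one hyperplane" bookkeeping. Both rest on the uniqueness statement in \cref{thm:medgph-half}(a), so I would state that consequence explicitly as a preliminary observation before running the corner argument.
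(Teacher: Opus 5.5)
Your proof is correct. The paper gives no argument of its own for this proposition; it only cites \cite[9.3.1]{bowditch:median} and \cite[2.28]{CPTT:quasitree}, so there is nothing in the text to match step by step.

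Your forward direction is the expected one, via \cref{ex:medgph-hyp-tree}. Your converse is self-contained. Starting from an \emph{arbitrary} cycle, you use only convexity of half-spaces and the uniqueness in \cref{thm:medgph-half}(a) to find a second edge $\{x',y'\}$ of the cycle on the hyperplane of $\{x_0,x_1\}$. You then take $K$ from the first edge of a geodesic inside $H$ from $x$ to $x'$. The four corner witnesses $x, z, y, y'$ all check out, and Step~2 correctly guarantees that this geodesic has a first edge.

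A common alternative goes through squares. One first shows that a median graph containing a cycle contains a $4$-cycle (e.g.\ via a shortest cycle, bipartiteness and the quadrangle property); the two hyperplanes of a square then evidently cross. That route also gives the characterization ``tree iff square-free'', but it needs that structural fact about cycles. Yours avoids it and uses only what \cref{sec:prelim-med} records.

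One small point to make explicit when writing this up: both uses of ``an edge crosses only its own hyperplane'' (for $y$ and for $y'$) require $K \notin \{H, \neg H\}$. This holds because $z \in H \setminus K$ and $x \in H \cap K$.
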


Finally, we will also need to consider quotients of median graphs:

\begin{definition}
\label{def:med-subpoc}
Let $X$ be a median algebra.
A \defn{subpocset} of half-spaces $\@H \subseteq \Half(X)$ is a subcollection of half-spaces that contains the trivial half-spaces $\emptyset, X$ and is closed under negation $\neg$.

Given a subpocset $\@H \subseteq \Half(X)$, the \defn{$\@H$-blocks} are the equivalence classes of elements of $X$ inseparable by $\@H$; we call this the \defn{$\@H$-block equivalence relation}:
\begin{eqaligned*}
x \mathrel{\#E_\@H} y  \coloniff  \forall H \in \@H\, (x \in H  \iff  y \in H).
\end{eqaligned*}
We write $X/\@H := X/\#E_\@H$ for the quotient set of $\@H$-blocks.
\end{definition}

\begin{proposition}[see \cite[2.54]{CPTT:quasitree}]
\label{thm:medgph-quot}
Let $(X,G)$ be a median graph, $\@H \subseteq \Half(X,G)$ be a subpocset of half-spaces.
Then the set of $\@H$-blocks $X/\@H$ equipped with the $G$-adjacency relation forms a median graph;
the quotient map $\pi : X ->> X/\@H$ is a median homomorphism;
and preimage $\pi^{-1}$ yields a bijection
\begin{eqaligned*}
\pi^{-1} : \Half(X/\@H) \cong \@H \subseteq \Half(X).
\end{eqaligned*}
\end{proposition}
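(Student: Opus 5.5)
The approach is to put the median structure on $X/\@H$ by passing to representatives, and then read off the graph structure and the half-spaces from the fact that each $H \in \@H$ has a median-homomorphic indicator function.

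\textbf{Step 1: the quotient median algebra.} First I show that $\#E_\@H$ is a congruence for the median operation. For $H \in \@H$, the indicator $1_H : X \to 2$ is a median homomorphism (\cref{def:med-half}). So whether $\ang{x,y,z} \in H$ is decided by majority vote among $x,y,z$, and hence depends only on the $\@H$-blocks of $x,y,z$. Therefore $\ang{\pi x,\pi y,\pi z} := \pi\ang{x,y,z}$ is well defined. The equational axioms of \cref{def:medalg} pass to the quotient automatically, and $\pi$ is a surjective median homomorphism by construction. Since $\pi$ is a surjective homomorphism,
\[
[\pi x,\pi y] = \set{\pi\ang{x,y,z} | z \in X} = \pi([x,y]),
\]
which is finite. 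So $X/\@H$ is an interval-finite median algebra, hence a median graph by \cref{thm:medalg-medgph}.

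\textbf{Step 2: edges of the quotient are $G$-adjacency of blocks.} This is the step I expect to need the most care.

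Suppose first that $x \mathrel{G} y$ with $\pi x \ne \pi y$. Then $[x,y] = \set{x,y}$, so $[\pi x,\pi y] = \set{\pi x,\pi y}$, and $\pi x,\pi y$ are adjacent in the quotient.

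Conversely, suppose $[\pi x,\pi y] = \set{\pi x,\pi y}$ with $\pi x \ne \pi y$. Take a geodesic $x = x_0, \dotsc, x_n = y$ in $G$. Every $x_i$ lies in $[x,y]$, so every $\pi x_i$ lies in $\set{\pi x,\pi y}$. Some consecutive pair must change block, and that pair is a $G$-edge joining the two blocks.

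\textbf{Step 3: the bijection on half-spaces.} Preimage $\pi^{-1}$ sends half-spaces to half-spaces because $\pi$ is a median homomorphism, and it is injective because $\pi$ is surjective.

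The image contains $\@H$. Indeed, each $H \in \@H$ is a union of blocks, and $1_H$ factors through $\pi$ as a map $X/\@H \to 2$. That map is a median homomorphism because $\pi$ is a surjective homomorphism, so $\pi(H)$ is a half-space with $\pi^{-1}(\pi(H)) = H$.

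The image is also contained in $\@H$. The trivial half-spaces are in $\@H$ by definition. Let $K$ be a nontrivial half-space of $X/\@H$, and choose a quotient edge $(\pi x,\pi y)$ with $\pi x \in K \not\ni \pi y$ (\cref{thm:medgph-half}). By Step 2 we may take $(x,y) \in G$. Since $x$ and $y$ lie in different blocks, some $H \in \@H$ satisfies $x \in H \not\ni y$. But $\pi^{-1}(K)$ is also a half-space of $X$ with $x \in \pi^{-1}(K) \not\ni y$. By the uniqueness in \cref{thm:medgph-half}(a), $\pi^{-1}(K) = H \in \@H$.
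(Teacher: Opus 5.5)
Your proof is correct. The paper does not prove this proposition; it cites \cite[2.54]{CPTT:quasitree}, so there is no in-paper argument to compare yours against. Your argument is complete and self-contained, using only facts recorded in the preliminaries:
\begin{itemize}
\item the block relation is a congruence, because each indicator $1_H$ for $H \in \mathcal{H}$ is a median homomorphism to $2$;
\item interval-finiteness follows from $[\pi x, \pi y] = \pi([x,y])$;
\item the geodesic argument identifies quotient edges with $G$-adjacency of blocks;
\item uniqueness of the half-space cut by an edge (\cref{thm:medgph-half}) gives the half-space bijection.
\end{itemize}

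Two small points are used implicitly and should be stated.
\begin{itemize}
\item In Step 3, distinct blocks only guarantee some $H \in \mathcal{H}$ with $x \in H \iff y \notin H$. Orienting it as $x \in H \not\ni y$ uses closure of $\mathcal{H}$ under $\neg$, which is part of the definition of a subpocset.
\item The existence of a quotient edge on the boundary of a nontrivial half-space $K$ uses connectedness of $X/\mathcal{H}$. You do have this from Step 1 via \cref{thm:medalg-medgph}.
\end{itemize}
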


\begin{definition}
\label{def:med-subpoc-invsep}
Let $X$ be a median algebra, and let $P \subseteq X^2$ be a partial equivalence relation.
A \defn{$P$-invariant half-space of $X$} means a half-space $H \subseteq X$ whose restriction to the domain of $P$ is $P$-invariant;
in other words, each $P$-equivalence class is contained in $H$ or $\neg H$.
We denote the subpocset of these half-spaces by
\begin{eqaligned*}
\Half^P(X) := \set{H \in \Half(X) | \forall (x, y) \in P\, (x \in H \iff y \in H)}.
\end{eqaligned*}
We say $P$ is \defn{invariantly separable by half-spaces in $X$} if every pair of distinct $P$-classes may be separated by a half-space in $\Half^P(X)$,
i.e., distinct $P$-classes belong to distinct $\Half^P(X)$-blocks.
\end{definition}

\subsection{Countable Borel equivalence relations}
\label{sec:prelim-cber}

We refer to \cite{bible} for general descriptive set theory background.

A \defn{countable Borel equivalence relation} $E$ on a standard Borel space $X$ is a Borel equivalence relation $E \subseteq X^2$ with countable equivalence classes.
See \cite{kechris:cber} for background on the theory of countable Borel equivalence relations.

A \defn{Borel homomorphism} from a Borel equivalence relation $E \subseteq X^2$ to a Borel equivalence relation $F \subseteq Y^2$ is a Borel map $f : X -> Y$ that descends to a map between the quotients $X/E -> Y/F$.
A \defn{Borel reduction} is a Borel homomorphism that descends to an \emph{injection} between the quotients.
We also denote a Borel reduction $f$ from $E$ to $F$ by $f : E \le_B F$ or $f : (X,E) \le_B (Y,F)$, and if such $f$ exists, we say $E$ is \defn{Borel reducible} to $F$, denoted $E \le_B F$.
For \emph{countable} Borel equivalence relations $E \subseteq X^2$ and $F \subseteq Y^2$,
a \defn{Borel bireduction} is a Borel reduction $f : (X,E) \le_B (Y,F)$ that descends to a \emph{bijection} between the quotients $X/E \cong Y/F$.
This is equivalent to the image $f(X) \subseteq Y$ being a \defn{complete section} of $F$, i.e., meeting every $F$-class;
or to the existence of a Borel reduction $g : (Y,F) \le_B (X,E)$ such that $f, g$ descend to inverses of each other.
See again \cite{kechris:cber}.

A \defn{selector function} of an equivalence relation $E \subseteq X^2$ is a function assigning to each $C \in X/E$ a unique point in $C$;
a \defn{transversal} of $E$ is the image of a selector, i.e., a subset of $X$ meeting each $E$-class in a unique point.
Recall that every \emph{finite} Borel equivalence relation $F \subseteq X^2$ (i.e., every $F$-class is finite) on a standard Borel space admits a Borel transversal;
such a transversal is then isomorphic to the quotient space $X/F$, which is thus standard Borel.

A \defn{Borel graphing} of a Borel equivalence relation $E \subseteq X^2$ is a Borel graph $G \subseteq X^2$ that generates $E$ as an equivalence relation, i.e., whose connectedness relation $\#E_G$ is $E$.
A \defn{Borel treeing} of $E$ is an acyclic Borel graphing, i.e., one whose components are trees;
$E$ is \defn{Borel treeable} if it admits a Borel treeing.
See \cite[Ch.~10]{kechris:cber}, \cite[\S3]{JKL:cber}, \cite{kechris-miller:topics}.
A subequivalence relation $F \subseteq E$ has \defn{finite index in $E$} if each $E$-class contains only finitely many $F$-classes.

As noted in the introduction, a Borel graphing of a countable Borel equivalence relation $E \subseteq X^2$ is one instance of a general notion of a ``Borel family of countable first-order structures on each $E$-class'', known as a \defn{Borel structuring} of $E$.
Another instance of a Borel structuring relevant to this paper is a Borel family of median algebra structures (\cref{def:medalg}) on each $E$-class.
See \cref{def:Lomega1omega-borel} for the general definition.

There is a precise correspondence, due to \cite{chen-kechris:structurable}, \cite{banerjee-chen:strinterp}, between Borel combinatorial constructions that may be performed on Borel structurings, and ``canonical'' or ``definable'' combinatorial constructions on single countable structures (thought of as living on the $E$-classes).
We adopt this perspective throughout; thus, the bulk of this paper concerns ``definable'' constructions on countable forests, median graphs, etc., rather than Borel structurings by such.

We give a review of all of the needed logical formalism in \cref{sec:Lomega1omega}.
Briefly, we are interested in \defn{uniformly definable} constructions in the \defn{countably infinitary logic $\@L_{\omega_1\omega}$}, meaning constructions that may be defined by a single $\@L_{\omega_1\omega}$ formula $\phi(x_1, \dotsc, x_n)$ across all structures of the relevant type; see \cref{def:Lomega1omega,def:Lomega1omega-def}.
(A simple example is \cref{def:medgph} of the median operation in a median graph; see \labelcref{def:Lomega1omega-def}.)
Such formulas may then be interpreted simultaneously on every class of a countable Borel equivalence relation $E$ equipped with a Borel structuring, to yield the corresponding Borel construction; see \cref{def:Lomega1omega-borel}.
In addition, there are certain non-definable ``auxiliary'' combinatorial structures that are ``freely available'' on every countable Borel equivalence relation, hence may also be used in Borel constructions.
In this article, we elect to work with one choice of basis for such auxiliary structures, namely a \emph{linear ordering together with a proper $\#N$-coloring of the intersection graph on finite subsets}; see \cref{thm:Lomega1omega-borel-LO+color}.

\section{Median subalgebras generated by end-continuous links}
\label{sec:medgph-finhyp-link-subalgebra}

In this section, we give a method for combining finitely many ``tree-like structures'' with matching large-scale geometry into a single ``tree-like structure'', canonically and symmetrically in the input structures.
Here by a ``tree-like structure'', we mean a median graph with finite hyperplanes (\cref{def:medgph-hyp}).
This core combinatorial construction will be extended in the following sections.

\begin{definition}
\label{def:link}
Let $(X_i)_{i \in \Ind}$ be a nonempty family of sets.
A \defn{link} between them will mean a family of bijections
$\link = (\link_{i,j} : X_i \cong X_j)_{i, j \in \Ind}$
that is coherent, i.e., satisfies
\begin{eqaligned*}
\link_{i,i} &= \id_{X_i},  \\
\link_{i,k} &= \link_{j,k} \circ \link_{i,j}.
\end{eqaligned*}
(That is, $\link$ forms a functor, or cocycle, or action, of the indiscrete equivalence relation $\Ind^2$.)

A link $\link$ is equivalently determined by its \defn{orbit equivalence relation} $\Link \subseteq (\bigsqcup_{i \in \Ind} X_i)^2$, where
\begin{eqaligned*}
X_i \ni x \mathrel{L} y \in X_j  \coloniff  \link_{i,j}(x) = y,
\end{eqaligned*}
and also by its \defn{hypergraph} $\LINK \subseteq \prod_{i \in \Ind} X_i$, namely
\begin{eqaligned*}
\LINK := \set[\Big]{
    \→x = (x_i)_{i \in \Ind} \in \prod_{i \in \Ind} X_i
    |
    \forall i, j \in \Ind\, (\link_{i,j}(x_i) = x_j)
}.
\end{eqaligned*}

A \defn{partial link} $(\link_{i,j})_{i,j}$ between $(X_i)_i$ will mean a link between a family of subsets $Y_i \subseteq X_i$.
(Thus each $\link_{i,j}$ is a partial bijection between $X_i, X_j$, with domain $Y_i$ and image $Y_j$;
and the orbit equivalence relation $\Link$ is a partial equivalence relation on $\bigsqcup_i X_i$.)
\end{definition}

\begin{group}
\let\link=f

\begin{definition}
\label{def:endcts}
Let $\link : (X, G_X) -`> (Y, G_Y)$ be a partial function between the vertex sets of two connected locally finite graphs.
We say $\link$ is \defn{end-continuous} if for every boundary-finite $V \subseteq Y$, the sets $\link^{-1}(V), \link^{-1}(\neg V) \subseteq X$ may be separated by a boundary-finite $U \subseteq X$.
\end{definition}

\begin{remark}
The name ``end-continuous'' is chosen because this means equivalently that $\link$ extends continuously to the closure of its domain in the end compactification $\^X$ of $(X, G_X)$.
See e.g., \cite[\S2.G]{CPTT:quasitree}.
However, this connection will not be used in this paper.
\end{remark}
\end{group}

We now have the main result of this section.
For its statement, recall \cref{def:med-prod} of the product of median graphs,
and \cref{def:med-subalg} concerning induced median graphs on median subalgebras of median graphs.

\begin{proposition}
\label{thm:medgph-finhyp-link-subalgebra}
Let $(X_i, G_i)_{i \in \Ind}$ be a finite family of locally finite hyperplane-finite median graphs,
and let $(\link_{i,j} : X_i -`> X_j)_{i, j \in \Ind}$ be a nonempty partial link between their vertex sets,
with associated hypergraph $\LINK \subseteq \prod_{i \in \Ind} X_i$.
Suppose each $\link_{i,j}$ is end-continuous.
Then the median subalgebra
$\ang{\LINK} = \smash{\ang{\LINK}_{\bigboxprod_i G_i}}$
of the product graph
$(\prod_i X_i, \bigboxprod_i G_i)$
generated by $\LINK$ is hyperplane-finite as a median graph.
\end{proposition}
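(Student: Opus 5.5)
We verify the criterion of \cref{thm:medgph-hyp-fin-nested} for $Y := \ang{\LINK}$. By \cref{def:med-subalg}, $Y$ is a median graph; it is nonempty since the partial link is nonempty. By \cref{ex:med-half-prod,ex:med-half-subalg}, every half-space of $Y$ has the form $Y \cap \pi_i^{-1}(H)$ for some $i \in \Ind$ and $H \in \Half(X_i)$. Fix a nontrivial half-space $Y \cap \pi_i^{-1}(H)$ of $Y$. Suppose $Y \cap \pi_j^{-1}(K)$ is a half-space of $Y$ not nested with it. By \cref{thm:med-half-nested-generator}, applied with generating set $\LINK$, all four corners contain a point $\→x \in \LINK$. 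Projecting, we get points $x \in \operatorname{dom}\link_{i,j}$ realizing all four combinations of ($x \in H$ or $x \in \neg H$) and ($\link_{i,j}(x) \in K$ or $\link_{i,j}(x) \in \neg K$). It therefore suffices to show that, for each $j$, only finitely many $K \in \Half(X_j)$ have this property. Several $K$ may restrict to the same half-space of $Y$, but this only helps.

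\emph{Case $j = i$.} Here $\link_{i,i} = \id$, so $H, K$ are non-nested half-spaces of $X_i$. Since $X_i$ is hyperplane-finite, \cref{thm:medgph-hyp-fin-nested} gives only finitely many such $K$.

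\emph{Case $j \ne i$.} Since $X_i$ is hyperplane-finite, $H$ has finite edge boundary, i.e.\ $H$ is boundary-finite. By end-continuity of $\link_{j,i}$, there is a boundary-finite $U \subseteq X_j$ separating $\link_{j,i}^{-1}(H)$ from $\link_{j,i}^{-1}(\neg H)$. Rephrased, the four corners say that each of $K$ and $\neg K$ meets both $\link_{i,j}(H) \subseteq U$ and $\link_{i,j}(\neg H) \subseteq \neg U$. So each of $K, \neg K$ meets both $U$ and $\neg U$.

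Let $F \subseteq X_j$ be the finite set of endpoints of edges in the boundary of $U$. We claim $K$ separates two points of $F$. Suppose not, say $F \subseteq K$ (the case $F \subseteq \neg K$ is symmetric). The set $\neg K$ is convex, hence connected in $X_j$, since geodesics stay inside intervals. Any path in $\neg K$ from a point of $U$ to a point of $\neg U$ would traverse a boundary edge of $U$, and hence visit a vertex of $F \subseteq K$; this is impossible. So $\neg K$ lies entirely in $U$ or entirely in $\neg U$, contradicting the previous paragraph.

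Finally, a half-space separating $a, b \in F$ is determined by an edge on any fixed geodesic from $a$ to $b$, by \cref{thm:medgph-half}. Hence only finitely many half-spaces separate some pair of points of $F$. This bounds the number of $K$ for each $j$, and since $\Ind$ is finite, every half-space of $Y$ is non-nested with only finitely many others. By \cref{thm:medgph-hyp-fin-nested}, $Y$ is hyperplane-finite.

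The main obstacle is the case $j \ne i$: it is the step that converts end-continuity into a finiteness statement. The argument above does this through connectedness of convex sets and the finite boundary of $U$; the rest of the proof is bookkeeping.
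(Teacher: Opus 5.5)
Your proof is correct and is essentially the paper's argument: your case $j \ne i$ is the paper's two-graph lemma (\cref{thm:medgph-unifcts-almostnested}), applied to $\link_{j,i}$ and inlined. The key step is the same: a half-space each of whose sides meets both $U$ and $\neg U$ must separate two boundary vertices of $U$, and is therefore determined by an edge on one of finitely many geodesics. The differences are cosmetic. You apply \cref{thm:med-half-nested-generator} directly in $\ang{\LINK}$, whereas the paper works in $\ang{\pi_{i,j}(\LINK)}$ and pulls back along $\pi_{i,j}$. You also treat $j = i$ separately, whereas the paper absorbs it because the partial identity is trivially end-continuous.
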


The proof hinges on the case of two graphs, which is encapsulated by:

\begin{group}
\let\link=f

\begin{lemma}
\label{thm:medgph-unifcts-almostnested}
Let $(X, G_X), (Y, G_Y)$ be median graphs,
let $\link : X -`> Y$ be a partial function,
and let $K \subseteq Y$ be a $G_Y$-half-space such that
$\link^{-1}(K), \link^{-1}(\neg K) \subseteq X$
may be separated by a $G_X$-boundary-finite subset $U \subseteq X$.
Then in the median subalgebra
$\ang{\link} = \ang{\link}_{G_X \boxprod G_Y} \subseteq X \times Y$
generated by the graph of $\link$,
the half-space
$\ang{\link} \cap (X \times K)$
is nested with
$\ang{\link} \cap (H \times Y)$
for all but finitely many $G_X$-half-spaces $H \subseteq X$.
\end{lemma}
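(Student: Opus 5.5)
The plan is to reduce non-nestedness in $\ang{\link}$ to a statement about $X$ alone, using \cref{thm:med-half-nested-generator}, and then bound the number of offending half-spaces $H$ using the finiteness of the boundary of $U$.

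\emph{Step 1 (transfer to $X$).} The subalgebra $\ang{\link}$ is generated by the graph $\set{(x,\link(x)) | x \in \operatorname{dom}\link}$. Suppose $\ang{\link} \cap (H \times Y)$ and $\ang{\link} \cap (X \times K)$ are non-nested. By \cref{thm:med-half-nested-generator}, each of the four corners contains a generator $(x,\link(x))$. Reading these off, we get domain points $x_1,x_2 \in H$ and $x_3,x_4 \in \neg H$ with $\link(x_1),\link(x_3) \in K$ and $\link(x_2),\link(x_4) \in \neg K$. Without loss of generality $\link^{-1}(K) \subseteq U$ and $\link^{-1}(\neg K) \subseteq \neg U$. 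It then follows that both $H$ and $\neg H$ meet both $U$ and $\neg U$. It therefore suffices to show that only finitely many half-spaces $H \subseteq X$ have this property.

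\emph{Step 2 (reduction to a finite set).} Let $S \subseteq X$ be the finite set of endpoints of edges in the edge boundary $\partial U$. Note $S \ne \emptyset$, since otherwise $U \in \set{\emptyset, X}$ by connectedness and then $H$ could not meet both $U$ and $\neg U$.

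I claim that any such $H$ separates two points of $S$. Suppose not, so $S \subseteq H$, say (the case $S \subseteq \neg H$ is symmetric). The half-space $\neg H$ is convex, hence connected, since geodesics between its points stay inside it. It contains a point of $U$ and a point of $\neg U$. A path inside $\neg H$ joining them must traverse an edge of $\partial U$, whose endpoints then lie in $S \cap \neg H = \emptyset$. This is a contradiction.

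\emph{Step 3 (counting).} For fixed $s,t \in S$, the half-spaces separating $s$ from $t$ correspond, via \cref{thm:medgph-half}, to the hyperplanes crossed by a geodesic from $s$ to $t$. There are at most $2\,d_{G_X}(s,t)$ of them, counting orientations. Since $S$ is finite, only finitely many $H$ separate some pair in $S$. By Steps 1 and 2, every $H$ for which nestedness fails is among these, which proves the lemma.

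The only delicate point is Step 2. The key input there is that convex sets in median graphs are connected, which follows from \cref{def:graph-interval} because intervals are unions of geodesics. I expect the rest to be routine bookkeeping.
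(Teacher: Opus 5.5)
Your proof is correct and follows essentially the same route as the paper's: you use \cref{thm:med-half-nested-generator} to place generators in all four corners, use convexity (hence connectedness) of $H$ and $\neg H$ to get boundary points of $U$ on both sides of $H$, and bound the possible $H$ by the finitely many hyperplanes crossed by geodesics between boundary points of $U$. The only difference is that the paper argues Step 2 directly (a geodesic inside $H$ from a point of $U$ to a point of $\neg U$ must pass through the boundary of $U$) rather than by contradiction, which is cosmetic.
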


\begin{proof}[qedof=\cref{thm:medgph-unifcts-almostnested}]
Suppose $\ang{\link} \cap (X \times K)$ is non-nested with $\ang{\link} \cap (H \times Y)$ for $H \in \Half(X)$.
Then by \cref{thm:med-half-nested-generator}, $f$ meets all four corners of $H \times Y$ and $X \times K$:
\begin{align*}
\link \cap (H \times K) &\ne \emptyset,  &
\link \cap (H \times \neg K) &\ne \emptyset,  \\
\link \cap (\neg H \times K) &\ne \emptyset,  &
\link \cap (\neg H \times \neg K) &\ne \emptyset.
\end{align*}
Let $(x,y) \in \link \cap (H \times K)$
and $(x',y') \in \link \cap (H \times \neg K)$.
Then we have
\begin{align*}
x &\in H \cap \link^{-1}(K),  &
x' &\in H \cap \link^{-1}(\neg K).
\end{align*}
Thus, there is a $G_X$-geodesic in $H$ from $x$ to $x'$, which must pass through some vertex $z \in H$ on the boundary of $U$.
Similarly, there is some $z' \in \neg H$ on the boundary of $U$.
But now any $G_X$-geodesic from $z$ to $z'$ contains an edge on the boundary of $H$, which uniquely determines $H$.
Since there are only finitely many possibilities for $z, z'$, and finitely many edges on a geodesic between them, there are only finitely many possibilities for $H$.
\end{proof}
\end{group}

\begin{proof}[of=\cref{thm:medgph-finhyp-link-subalgebra}]
Let
$\pi_i : \prod_k X_k -> X_i$
be the projection onto the $i$th coordinate,
and
$\pi_{i,j} = (\pi_i, \pi_j) : \prod_k X_k -> X_i \times X_j$
be the projection onto the $(i,j)$th coordinates.
Thus $\pi_{i,j}(\LINK)$ is the graph of $\link_{i,j}$.
By \cref{thm:medgph-unifcts-almostnested},
for any $K \in \Half(X_j, G_j)$,
for all but finitely many $H \in \Half(X_i, G_i)$,
we have that
$\ang{\pi_{i,j}(\LINK)} \cap (X_i \times K)$
is nested with
$\ang{\pi_{i,j}(\LINK)} \cap (H \times X_j)$,
whence by taking preimage under the median homomorphism
$\pi_{i,j} : \ang{\LINK} -> \ang{\pi_{i,j}(\LINK)}$,
we get that
$\ang{\LINK} \cap \pi_j^{-1}(K)$
is nested with
$\ang{\LINK} \cap \pi_i^{-1}(H)$.
Since all half-spaces in $\ang{\LINK}$ are of the form $\ang{\LINK} \cap \pi_i^{-1}(H)$ for some $i$ and $H \in \Half(X_i, G_i)$
(\cref{ex:med-half-prod,ex:med-half-subalg}),
it follows that every half-space in $\ang{\LINK}$ is nested with all but finitely many others,
i.e., $\ang{\LINK}$ is hyperplane-finite by \cref{thm:medgph-hyp-fin-nested}.
\end{proof}

We will be applying \cref{thm:medgph-finhyp-link-subalgebra} to links whose end-continuity is derived from the following standard fact in the coarse geometry of graphs;
see e.g., \cite[2.67]{CPTT:quasitree}.

\begin{group}
\let\link=f

\begin{lemma}
\label{thm:gph-coarse-endcts}
Let $\link : (X, G_X) -> (Y, G_Y)$ be a finite-to-one Lipschitz function between connected locally finite graphs.
Then $\link$ is end-continuous.
\end{lemma}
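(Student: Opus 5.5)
Given a boundary-finite $V \subseteq Y$, I will take as separating set the $r$-neighborhood of the preimage,
\[
U := \set{x \in X | d_{G_X}(x, \link^{-1}(V)) \le r},
\]
where $r$ is chosen large enough to exceed the Lipschitz constant $C$ of $\link$. Here I read ``boundary-finite'' as: finitely many edges of the graph join the set to its complement. Since $U \supseteq \link^{-1}(V)$, the plan is to check three things in turn: $U$ is disjoint from $\link^{-1}(\neg V)$, $U$ contains $\link^{-1}(V)$, and the edge boundary of $U$ is finite.

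For disjointness, I would pick $r < 1$ when $C = 0$, and otherwise first replace $V$ by a better-behaved set. Note that $\link$ maps $G_X$-edges to pairs at distance $\le C$. Let $\partial V$ be the finite set of endpoints of boundary edges of $V$. Any $G_Y$-path of length $\le C$ from $V$ to $\neg V$ must pass within distance $C$ of $\partial V$. Since $Y$ is locally finite, the ball $B := B_{G_Y}(\partial V, C)$ is finite.

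I then set $U := \link^{-1}(V \setminus B) \cup W$ for a suitable finite-ish correction $W$. A cleaner version of the same idea is to take $U := \link^{-1}(V)$ directly and bound its boundary. The boundary edges of $\link^{-1}(V)$ are $G_X$-edges $(x,x')$ with $\link(x) \in V$ and $\link(x') \notin V$. Then $d(\link(x), \link(x')) \le C$, so there is a path of length $\le C$ from $V$ to $\neg V$, which forces $\link(x) \in B$. Because $\link$ is finite-to-one and $B$ is finite, $\link^{-1}(B)$ is finite. Hence there are only finitely many such $x$, and by local finiteness of $X$ only finitely many such edges. So $U := \link^{-1}(V)$ is itself boundary-finite, and it separates $\link^{-1}(V)$ from $\link^{-1}(\neg V)$, since $\link$ is total and these sets are complementary. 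This simplified choice is the plan I would actually follow.

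The only real obstacle is making sure the edge boundary count is finite. That step uses all the hypotheses together: the Lipschitz bound localizes the images of boundary edges near $\partial V$, local finiteness of $Y$ makes that neighborhood finite, finite-to-one-ness pulls it back to a finite set, and local finiteness of $X$ bounds the number of edges. Connectedness is needed only to make the path metric finite, so that the Lipschitz condition makes sense.
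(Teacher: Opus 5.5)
Your final plan is correct and is exactly the paper's proof. You take $U$ to be the preimage of $V$ itself and show it has finite edge boundary: each boundary edge maps to a pair within the Lipschitz constant of the finite boundary of $V$, that neighborhood is finite by local finiteness of $G_Y$, and its preimage is finite by finite-to-one-ness. The preliminary $r$-neighborhood and $V \setminus B$ constructions should be deleted, since they are not needed and the neighborhood would in general meet the preimage of $\neg V$.
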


\begin{proof}
Suppose $\link$ is $k$-Lipschitz, where $k \in \#N$.
Let $V \subseteq Y$ have finite boundary,
and let $(x,x') \in G_X$ with $x \in \link^{-1}(V) \not\ni x'$.
Then there is a path of length $\le k$ between $\link(x), \link(x')$, which must pass through the boundary of $V$.
Thus, $\link(x), \link(x')$ are within distance $k$ of the boundary of $V$.
Since $V$ has finite boundary, $G_Y$ is locally finite, and $\link$ is finite-to-one, there are only finitely many possibilities for $x, x'$.
\end{proof}
\end{group}

\section{Filtrations of median algebras generated by tree links}\label{sec:filtration}

We now extend the construction from the preceding section to a method for combining finitely many trees with a link between their vertex sets, with no geometric constraints, into a single ``tree-like structure'' in a weaker sense: a nested system of hyperplane-finite median graphs.
The next section will show how to turn this into a genuine tree.

The uniform definability (i.e., ``canonicity'') of the following construction is its key feature, that will in particular allow it to be carried out in the Borel context (in \cref{sec:treeable-finite-index}); see \cref{sec:prelim-cber,sec:Lomega1omega}.
Recall here also \cref{def:link} of the hypergraph of a link,
\cref{def:med-prod,def:med-subalg} of product median graph and subalgebra,
and \cref{def:med-subpoc-invsep} of invariant separability.

\begin{theorem}
\label{thm:tree-link-subalgebra}
Let $X$ be a nonempty set,
$T \subseteq X^2$ be a locally finite forest with finitely many components,
and $\link = (\link_{A,B} : A \cong B)_{A, B \in X/T}$ be a link between the $T$-components,
with associated hypergraph
$\LINK \subseteq \prod_{A \in X/T} A$.
From this structure $T, \link$ on $X$, we may uniformly define (in $\@L_{\omega_1\omega}$) a
sequence of partial equivalence relations
\begin{eqaligned*}
({=_\LINK}) \subseteq P_1 \subseteq P_2 \subseteq P_3 \subseteq \dotsb \subseteq \paren{\textstyle \prod_{A \in X/T} A}^2,
\end{eqaligned*}
such that $\bigcupup_n P_n$ has a single equivalence class,
and obeying the following additional properties:
\begin{enumerate}[alph]

\item \label{thm:tree-link-subalgebra:subalg}
Each equivalence class $C$ of each $P_n$ is the median subalgebra of the product median graph
$(\prod_{A \in X/T} A, \bigboxprod_{A \in X/T} \eval{T}_A)$
of the components of $T$
generated by
$C \cap \LINK \subseteq \prod_{A \in X/T} A$.

\noindent
(Thus, $\bigcupup_n P_n$ is the indiscrete equivalence relation on the median subalgebra generated by~$\LINK$.)

\item \label{thm:tree-link-subalgebra:finhyp}
Each such $C$ is hyperplane-finite as a median graph.

\item \label{thm:tree-link-subalgebra:invsep}
For each equivalence class $D$ of each $P_{n+1}$, the partial equivalence relation $\eval{P_n}_D \subseteq D^2$ is invariantly separable by half-spaces in $D$.

\end{enumerate}
\end{theorem}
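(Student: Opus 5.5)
The plan is to build the filtration from a weight on the edges of $T$ that measures how far the link stretches each edge, and to let $P_n$ be generated by the points of $\LINK$ lying in the blocks of the ``heavy'' half-spaces.

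\textbf{Weights and the half-space filtration.} For an edge $\set{x,y}$ of $T$ inside a component $A$, set
\[
w(x,y) := \max_{B \in X/T} d_{T|_B}\bigl(\link_{A,B}(x), \link_{A,B}(y)\bigr) \in \#N.
\]
This is finite because $X/T$ is finite. In the product $\prod_{A} A$, each such edge determines a complementary pair of half-spaces $\pi_A^{-1}(H)$ and $\pi_A^{-1}(\neg H)$, where $H$ is one side of the edge (\cref{ex:medgph-hyp-tree,ex:med-half-prod}). Let $\@H_n$ be the subpocset consisting of these half-spaces for edges of weight $>n$, together with $\emptyset$ and the whole space. It decreases in $n$, so the block relation $\#E_{\@H_n}$ increases.

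\textbf{Definition of $P_n$.} For each $\#E_{\@H_n}$-block $Q$ meeting $\LINK$, the set $\ang{Q \cap \LINK}$ is one $P_n$-class; $P_n$ has no other classes. The following checks are then routine.
\begin{itemize}
\item Blocks are intersections of half-spaces, hence convex, hence subalgebras. So $\ang{Q\cap\LINK} \subseteq Q$, and the classes are disjoint.
\item Every block of $\@H_n$ lies inside a block of $\@H_{n+1}$, so $P_n \subseteq P_{n+1}$.
\item Distinct points of $\LINK$ differ in some coordinate $A$ and are therefore separated by a half-space of $\@H_n$ for small $n$ (note that $w \ge 1$). This gives $=_\LINK \subseteq P_1$ once we index so that $P_1$ uses weights $>1$, or after a harmless shift of indices.
\item Any two points of $\LINK$ are separated by only finitely many edges (the edges on the geodesics in each coordinate). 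Hence they lie in a common block for $n$ large, so $\bigcupup_n P_n$ is a single class.
\end{itemize}
Property (\labelcref{thm:tree-link-subalgebra:subalg}) holds by construction. Everything is $\@L_{\omega_1\omega}$-definable from $T$ and $\link$, since it uses only distances, edges and generated subalgebras.

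\textbf{Invariant separability (\labelcref{thm:tree-link-subalgebra:invsep}).} Let $D$ be a $P_{n+1}$-class. Each $P_n$-class inside $D$ sits inside a single $\@H_n$-block, so every half-space in $\@H_n$ restricted to $D$ is $P_n$-invariant. Distinct $P_n$-classes lie in distinct $\@H_n$-blocks, so some such half-space separates them. It remains nontrivial on $D$ after restriction (\cref{ex:med-half-subalg}).

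\textbf{Hyperplane-finiteness (\labelcref{thm:tree-link-subalgebra:finhyp}).} This is the main obstacle. Fix a block $Q$ and set $C = \ang{Q\cap\LINK}$. Let $S_A \subseteq A$ be the convex hull (subtree) of $\pi_A(Q\cap\LINK)$. Two points of $Q\cap\LINK$ are not separated by any heavy half-space, so every edge of $S_A$ has weight $\le n$. Therefore the restricted link $\link_{A,B}$ on $\pi_A(Q\cap\LINK)$ is an injective, $n$-Lipschitz partial map from $S_A$ to $S_B$.

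The product $\prod_A S_A$ is a convex subset of the ambient product containing $C$. So it suffices to apply \cref{thm:medgph-finhyp-link-subalgebra} to the locally finite trees $S_A$ with this partial link. That needs end-continuity of partial, finite-to-one, Lipschitz maps, which \cref{thm:gph-coarse-endcts} states only for total maps. I would redo its proof for partial maps as follows. Given a boundary-finite $V \subseteq S_B$, take $U$ to be the union of the components of $S_A$ minus all edges $(x,x')$ such that some domain points $u,u'$ near the geodesic from $x$ to $x'$ map into $V$ and $\neg V$ respectively.

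A cleaner route, which I would try first, is to verify the hypothesis of \cref{thm:medgph-unifcts-almostnested} directly. Take $K$ a half-space of $S_B$, i.e.\ one side of an edge $e$. A half-space $H$ of $S_A$ that is non-nested with $K$ has domain points $x \in H\cap f^{-1}(K)$ and $x' \in H\cap f^{-1}(\neg K)$. Consecutive domain points along the $S_A$-geodesic from $x$ to $x'$ can be chosen within bounded distance, because hull edges are spanned by domain points. Their images then switch sides of $e$ within distance $\lesssim n$ of $e$, which leaves only finitely many such points by injectivity and local finiteness. The boundary-finite set $U$ is then assembled from these finitely many points.
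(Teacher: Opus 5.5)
Your weights ($w\le n$ is exactly the paper's cutoff forest $T^{\le n}$), your $\mathcal{H}_n$, your $P_n$, your proof of (c), and your reduction of (b) to \cref{thm:medgph-finhyp-link-subalgebra} all follow the paper. Incidentally, ${=_\Lambda}\subseteq P_1$ needs no separation argument or index shift, since every point of $\Lambda$ lies in a block meeting $\Lambda$.

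The gap is the end-continuity step itself. Your first route would prove end-continuity for partial, finite-to-one maps that are Lipschitz on their domain, and that statement is false. On the ray $0,1,2,\dots$, the map $2^k\mapsto(-1)^k2^k$ into the integer line is injective and $4$-Lipschitz on its domain. Yet the preimages of the two sides of the edge $\{-1,0\}$ interleave along the ray, so no boundary-finite set separates them. Your second route needs consecutive domain points along an $S_A$-geodesic to be boundedly close. But ``hull edges are spanned by domain points'' only says each hull edge lies on some geodesic between two domain points, and that bounds nothing: two far-apart domain points already give a long gap. In fact $\pi_A(Q\cap\Lambda)$ is typically non-convex and can have arbitrarily long gaps inside a single block. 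Across a gap of length $g$, Lipschitz control on domain points only gives image distance at most $ng$, so your finiteness count fails.

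The missing observation is that you should not discard the non-domain vertices. You already proved that every edge of $S_A$ has weight at most $n$. That says precisely that the \emph{total} map $\lambda_{A,B}|_{S_A}\colon S_A\to B$ is injective and $n$-Lipschitz. Its codomain is the whole component $B$, since non-domain vertices of $S_A$ may map outside $S_B$. So \cref{thm:gph-coarse-endcts} applies verbatim.

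Two restrictions then finish the step:
\begin{enumerate}
\item Restricting the domain to $\pi_A(Q\cap\Lambda)$ keeps end-continuity, because the same separating sets work.
\item Restricting the codomain to the convex subtree $S_B$ also keeps it: pull a boundary-finite $V\subseteq S_B$ back along the nearest-point projection $B\to S_B$, which gives a set with the same finite boundary.
\end{enumerate}
In your second route this just means extending $K$ across $e$ to a half-space of $B$ and walking along every vertex of the geodesic from $x$ to $x'$, where consecutive images are within distance $n$.

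This is exactly the paper's argument, except that the paper uses the $T^{\le n}$-component $\pi_A(Q)$ in place of your hull $S_A$, which is immaterial. With this repair the rest of your proof goes through.
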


Note that here, $\prod_{A \in X/T} A$ is the Cartesian product of the vertex sets of the components of $T$,
i.e., the set of selector functions $\→x = (x_A)_{A \in X/T}$ picking a vertex $x_A$ in each component $A$ of $T$.
Equivalently, we may identify such a selector $\→x = (x_A)_A$ with its image transversal $\set{x_A}_A$ of $\#E_T$, which is a finite subset of $X$.
Then $\LINK \subseteq \prod_{A \in X/T} A$ consists of the selectors that pick vertices corresponding to each other via the link $\link$,
or the transversals that form an equivalence class of the orbit equivalence relation $\Link$ of $\link$ (\cref{def:link}).
The transversal representation of $\LINK$ yields the cleanest translation into $\@L_{\omega_1\omega}$ formulas uniformly defining this construction;
see \cref{eq:tree-link-subalgebra:def} below.

\begin{proof}
We abbreviate the product median graph of the components of $T$ by
\begin{eqaligned*}
(X_\times, T_\boxprod)
:= \paren[\Big]{
    \prod_{A \in X/T} \nmsp\nmsp A,
    \bigboxprod_{A \in X/T} \nmsp \eval{T}_A
}.
\end{eqaligned*}

For each $1 \le n \in \#N$, define the \emph{$n$th cutoff subforest of $T$} to be
\begin{eqaligned*}
T^{\le n} := \set[\big]{
    (x,y) \in T
    |
    \forall B \in X/T\, \paren[\big]{d_T(\link_{[x]_T,B}(x), \link_{[x]_T,B}(y)) \le n}
}.
\end{eqaligned*}
That is, in each component $A$ of $T$, we keep only the edges $(x,y) \in \eval{T}_A$ such that the corresponding vertices in every other component $B$ according to the link are at distance $\le n$ apart.
Hence, the $\eval{T^{\le n}}_A$-components are the blocks (\cref{def:med-subpoc}) of the subpocset
\begin{eqaligned*}
\@H^{>n}_A := \set[\big]{
    H \in \Half(A, \eval{T}_A)
    |
    \exists (x, y) \in \eval{T}_A \setminus T^{\le n}\, (x \in H \not\ni y)
} \cup \set{\emptyset, A}
\end{eqaligned*}
of half-spaces in $A$ whose boundary edge is \emph{not} in $T^{\le n}$.
Put
\begin{eqaligned*}
T^{\le n}_\boxprod &:= \bigboxprod_{A \in X/T} \eval{T^{\le n}}_A \subseteq T_\boxprod,  \\
\@H^{>n}_\boxprod &:= \bigcup_{A \in X/T} \pi_A^{-1}(\@H^{>n}_A) \subseteq \Half(X_\times, T_\boxprod),
\end{eqaligned*}
where $\pi_A : X_\times -> A$ is the projection onto the $A$th coordinate.
Then the $T^{\le n}_\boxprod$ form an increasing exhaustive sequence of subgraphs of $T_\boxprod$;
and the $\@H^{>n}_\boxprod$ form a decreasing vanishing sequence of subpocsets of $\Half(X_\times, T_\boxprod)$,
whose blocks are the $T^{\le n}_\boxprod$-components.

Let
\begin{eqaligned*}
\ang{\LINK}_{T^{\le n}_\boxprod}
:= \bigsqcup_\substack{B \in X_\times / T^{\le n}_\boxprod \\ B \cap \LINK \ne \emptyset}
    \ang{B \cap \LINK}_{T_\boxprod}
\subseteq X_\times
\end{eqaligned*}
be the union,
over all $T^{\le n}_\boxprod$-components, i.e., $\@H^{>n}_\boxprod$-blocks, $B$ that meet the hypergraph $\LINK$,
of the median subalgebra
$\ang{B \cap \LINK}_{T_\boxprod} \subseteq B$
generated by $B \cap \LINK$.
From \cref{def:med-subalg}, each $\ang{B \cap \LINK}_{T_\boxprod}$ is an interval-finite median algebra, hence carries an induced median graph structure;
while $\smash{\ang{\LINK}_{T^{\le n}_\boxprod}}$ is a disjoint union of median graphs
(it is not a median algebra globally).

Define finally $P_n$ to be the connectedness relation of this disjoint union of median graphs,
i.e., the restriction to ${\ang{\LINK}_{T^{\le n}_\boxprod}}$ of the connectedness relation of $T^{\le n}_\boxprod$, or equivalently of the $\@H^{>n}_\boxprod$-block equivalence relation.
Thus for $\→x, \→y \in X_\times$,
\begin{eqaligned*}
\→x \mathrel{P_n} \→y  \coloniff
    \→x \in \ang{\LINK}_{T^{\le n}_\boxprod}
    \AND \→y \in \ang{\LINK}_{T^{\le n}_\boxprod}
    \AND \→x \mathrel{\#E_{T^{\le n}_\boxprod}} \→y.
\end{eqaligned*}
Since $\bigcupup_n T^{\le n}_\boxprod = T_\boxprod$ is connected,
it follows that $\bigcupup_n P_n$ is the indiscrete relation on $\ang{\LINK}_{T_\boxprod}$.
We check the other desired properties:
\begin{itemize}

\item[\cref{thm:tree-link-subalgebra:subalg}]
is immediate from the definition of $P_n$.

\item[\cref{thm:tree-link-subalgebra:invsep}]
is also straightforward:
if $D$ is a $P_{n+1}$-class,
then $D$ is contained in an $\@H^{>n+1}_\boxprod$-block $B$,
and the $P_n$-classes $C \subseteq D$ are contained in distinct $\@H^{>n}_\boxprod$-blocks $A \subseteq B$,
whence the half-spaces in $\@H^{>n}_\boxprod$ restricted to $D$ separate the $P_n$-classes $C \subseteq D$.

\item[\cref{thm:tree-link-subalgebra:finhyp}]
follows from \cref{thm:medgph-finhyp-link-subalgebra}:
if $C$ is a $P_n$-class,
then $C = \ang{B \cap \LINK}_{T_\boxprod}$ for a $T^{\le n}_\boxprod$-component $B$,
which is a product of $\eval{T^{\le n}}_A$-components
$B_A := \pi_A(B) \subseteq A$
for each $T$-component $A \in X/T$.
Now $B \cap \LINK$ is the hypergraph of the partial link $\link^B$ between $(B_A)_{A \in X/T}$ obtained by restricting $\link$ to these subsets.
We claim that for each $A, A' \in X/T$, the restriction of $\link_{A,A'} : A -> A'$ to a partial function $\link^B_{A,A'} : B_A -`> B_{A'}$, with domain $\pi_A(B \cap \LINK)$ and image $\pi_{A'}(B \cap \LINK)$, is end-continuous (\cref{def:endcts}).
Indeed, firstly, the domain restriction $\eval{\link_{A,A'}}_{B_A} : B_A -> A'$ is $n$-Lipschitz by definition of $B_A$ being a $T^{\le n}$-component,
and injective because $\link$ is a link,
whence it is end-continuous by \cref{thm:gph-coarse-endcts}; further restricting the domain to $\pi_A(B\cap\LINK)$ preserves end-continuity.
Secondly, end-continuity of
$\smash{\eval{\link_{A,A'}}_{\pi_A(B\cap\LINK)}}$ as a partial function from $B_A$ into $A'$
implies end-continuity of its codomain restriction to $B_{A'}$, since $B_{A'} \subseteq A'$ is a convex subtree; this codomain restriction is exactly $\smash{\link^B_{A,A'}}$, whence the claim.
Thus \cref{thm:medgph-finhyp-link-subalgebra} applies to $\link^B$ to show that $\ang{B \cap \LINK}_{T_\boxprod} \subseteq B$ has finite hyperplanes.

\end{itemize}

Finally, we address the uniform definability of $P_n$.
As noted above, we use here the representation of $X_\times$ via transversals of $\#E_T$, in which $\LINK \subseteq X_\times$ consists of the classes of the orbit equivalence relation $\Link$.
A transversal of $\#E_T$ is a finite subset of $X$, which may be represented as an equivalence class of finite ordered tuples.
To say that $P_n \subseteq X_\times^2$ is uniformly definable then means that its lift to a relation on ordered tuples is definable, i.e., for each $k \in \#N$, the predicate
\begin{eqaligned}[t][
\label{eq:tree-link-subalgebra:def}]
\text{``$T$ has $k$ components and }
\set{x_1, \dotsc, x_k} \mathrel{P_n} \set{y_1, \dotsc, y_k}
\text{''}
\end{eqaligned}
may be expressed by an $\@L_{\omega_1\omega}$ formula
in the variables $x_1, \dotsc, x_k, y_1, \dotsc, y_k$
and the symbols $T, \link$.
(This is an instance of a uniformly definable relation on an \emph{imaginary sort};
see \cref{ex:Lomega1omega-finpow}.)

It is now routine to translate the informal description of the above construction into formulas defining \cref{eq:tree-link-subalgebra:def}, in the following order:
\begin{itemize}

\item
We may define
``$\ang{x, y, z}_T = m$'',
i.e., the union of the graphs of the median operations on each component of $T$,
by a $4$-ary formula read off of \cref{def:medgph}
(see \cref{ex:Lomega1omega-graph-metric}).

\item
Using this, we may define the median operation $\ang{\blank,\blank,\blank}_{T_\boxprod}$ of $T_\boxprod$, where we represent elements of $X_\times$ as equivalence classes of tuples, in the same sense as in \cref{eq:tree-link-subalgebra:def},
i.e., by defining the lifted relation on $k$-tuples, when $T$ has $k$ components, by a $4k$-ary formula for each $k$.

\item
We may define $T^{\le n}$ by an evident formula read off of its definition above,
using the provided link $\link$ and the $T$-path metric $d_T$ (see \cref{ex:Lomega1omega-graph-metric}).

\item
Using this, we may define the $T^{\le n}_\boxprod$-connectedness relation $\#E_{T^{\le n}_\boxprod}$ on $X_\times$.

\item
The orbit equivalence relation $\Link$ is easily interdefinable with the link $\link$ (see \cref{ex:Lomega1omega-link}),
using which we may define the set $\LINK \subseteq X_\times$ of $\Link$-classes.

\item
We may now define
``$\→y$ belongs to the median subalgebra
$\ang{[\→x]_{T^{\le n}_\boxprod} \cap \LINK}_{T_\boxprod}$''
(in the same sense as in \cref{eq:tree-link-subalgebra:def}, i.e., we are really defining its lift to $k$-tuples for each $k$),
by taking a countable disjunction over all countably many median algebra terms
(i.e., syntactic expressions built recursively using the operation $\ang{\blank,\blank,\blank}$)
that may be used to generate $\→y$,
and for each such term,
taking an existential quantifier over all possible generators in
$[\→x]_{T^{\le n}_\boxprod} \cap \LINK$.

\item
Finally, $\→x \mathrel{P_n} \→y$ iff they are in the same $T^{\le n}_\boxprod$-component and both belong to
$\smash{\ang{[\→x]_{T^{\le n}_\boxprod} \cap \LINK}_{T_\boxprod}}$;
this is evidently definable from the above.
\qedhere

\end{itemize}
\end{proof}

\begin{remark}\label{rmk:Q_nDefinition}
We point out the following explicit description of the subalgebras constructed above. 
Let $Q_n$ be the equivalence relation on $X$ given by
\[
Q_n := \set[\big]{ (x,y)\in \#E_T | \forall A \in X/T\, \paren[\big]{(\link_{[x]_T,A}(x), \link_{[x]_T,A}(y)) \in \#E_{T^{\leq n}}} } ,
\]
and let $\~{Q}_n$ be the equivalence relation generated by $Q_n$ and $L$, where $L\subseteq X^2$ is the orbit equivalence relation of $\lambda$.
Then each $\~{Q}_n$-class $D$ meets each $T$-component $A$ in a single $Q_n$-class $D\cap A$, and $\lambda$ restricts to a link $\lambda^D$ between $(D\cap A)_{A\in X/T}$.
For each $P_n$-class $C$, its median algebra generating set $C\cap \LINK$ is exactly the hypergraph of such a $\lambda^D$, namely with $D=\bigsqcup_{A\in X/T}\pi_A(C\cap\LINK )$.
\end{remark}

\section{Treeing filtrations of hyperplane-finite median graphs}
\label{sec:cptt-union}

The following is \cite[3.1]{CPTT:quasitree}.
That result was not stated with the precise auxiliary input data and definability conclusion below;
however, an inspection of the proof shows that these are all that is needed.
For the sake of completeness, we will recall the proof.

\begin{theorem}[Chen--Poulin--Tao--Tserunyan]
\label{thm:cptt}
Let $(X,G)$ be a median graph with finite hyperplanes.

Suppose we are also given a linear ordering of the unoriented edge set $G/\pm$,
as well as a proper $\#N$-coloring of the intersection graph on the collection of all finite subsets of $G/\pm$.

From this input structure, we may uniformly define (in $\@L_{\omega_1\omega}$) a spanning subtree $T \subseteq G$.
\end{theorem}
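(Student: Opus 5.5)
The plan is to build $T$ as an increasing union of forests $T_0 \subseteq T_1 \subseteq \dotsb$. We add edges hyperplane by hyperplane, using the coloring to decide the order in which hyperplanes are processed, and the linear order to pick one edge from each hyperplane.

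First, the coloring. For each unoriented hyperplane $h$ (a finite subset of $G/\pm$), let $S(h)$ be the union of $h$ with all hyperplanes crossing $h$, i.e., those whose half-spaces are non-nested with those of $h$. By \cref{thm:medgph-hyp-fin-nested} only finitely many hyperplanes cross $h$, and each is finite, so $S(h)$ is a finite subset of $G/\pm$. If $h, h'$ cross, then $h \subseteq S(h) \cap S(h')$, so $S(h)$ and $S(h')$ are adjacent in the intersection graph on finite subsets. Hence $c(h) := $ the given color of $S(h)$ is a proper $\#N$-coloring of the crossing graph of hyperplanes. In particular, the hyperplanes of any fixed color are pairwise nested.

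Next, the induction. For $n \in \#N$, let $\@H_{\ge n}$ be the subpocset of half-spaces bounded by hyperplanes of color $\ge n$, together with $\emptyset, X$. Its blocks are the components of the subgraph of edges whose hyperplane has color $<n$; this holds because two vertices in different blocks are separated by some hyperplane of color $\ge n$, which every path between them must cross. We will construct forests $T_n \subseteq G$ whose components are exactly the $\@H_{\ge n}$-blocks. Start with $T_0 = \emptyset$, since the $\@H_{\ge 0}$-blocks are singletons by \cref{thm:med-cvx-sep}. Given $T_n$, fix an $\@H_{\ge n+1}$-block $B$. It is convex, hence a median graph with induced median structure, and it is a union of $\@H_{\ge n}$-blocks. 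By \cref{thm:medgph-quot}, the quotient $B/\@H_{\ge n}$ is a median graph whose half-spaces correspond to those of $B$ coming from hyperplanes of color exactly $n$. These are pairwise nested, so by \cref{thm:medgph-half-nested-tree} the quotient is a tree. Each of its edges corresponds to a single hyperplane $h$ of color $n$ meeting $B$, and $h \cap B$ is a nonempty finite edge set. Every edge in $h \cap B$ joins the two adjacent $\@H_{\ge n}$-blocks on either side of $h$. We add to $T_{n+1}$ the least edge of $h \cap B$ in the given linear order. Then $T_{n+1} \cap B$ consists of spanning trees of the $\@H_{\ge n}$-blocks joined along a spanning tree of the quotient, so it is a spanning tree of $B$.

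Finally, set $T := \bigcup_n T_n$. Any cycle in $T$ is finite, so it lies in some $T_n$, which is a forest; hence $T$ is acyclic. Any two vertices are joined by a finite $G$-path, whose hyperplanes have bounded color. So both vertices lie in one $\@H_{\ge n}$-block for large $n$, and are therefore connected in $T_n$. Thus $T$ is a spanning tree.

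The step I expect to need the most care is the quotient step: checking that $h \cap B$ is exactly a set of edges between two adjacent blocks of the quotient tree, and that the edges of the quotient tree correspond bijectively to the color-$n$ hyperplanes meeting $B$. I would derive this from \cref{thm:medgph-quot} applied to the convex set $B$, together with the fact that half-spaces of $B$ are restrictions of half-spaces of $X$ (\cref{ex:med-half-subalg}), hyperplanes of $B$ being intersections with $B$. Uniform definability is routine. Hyperplanes, crossing, $S(h)$, and $c(h)$ are $\@L_{\omega_1\omega}$-definable from $G$ and the coloring, working on the finite-subset imaginary sort. The recursion over $n$ is a countable conjunction and disjunction. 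The choice at each step is "least edge in a finite definable set", which is definable from the linear order.
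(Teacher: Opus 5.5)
Your induction over colors, the step adding one least edge per color-$n$ hyperplane inside each $\@H_{\ge n+1}$-block, and the passage to the union all match the paper's proof. The gap is in the coloring step. A proper coloring of the intersection graph on finite subsets only separates \emph{distinct} intersecting sets. So from $h \subseteq S(h) \cap S(h')$ you may conclude $c(h) \neq c(h')$ only if also $S(h) \neq S(h')$, and this can fail. For the two hyperplanes $h, h'$ of a square, $S(h) = S(h')$ is the set of four edges of that square.

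This happens in hyperplane-finite median graphs such as the Cayley graph of $(\#Z/2\#Z)^2 * (\#Z/2\#Z)^3$ from \cref{fig:medgph-hyp}. There each square coset is attached to the rest only at its vertices, so its two hyperplanes cross each other and nothing else. Likewise, the three hyperplanes of each cube coset all have the same $S$.

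The construction itself then breaks, not just its justification. Put $n = c(h) = c(h')$. The four vertices of the square lie in one $\@H_{\ge n+1}$-block but in four distinct $\@H_{\ge n}$-blocks, so the quotient contains a $4$-cycle and is not a tree. Your rule puts exactly two edges of the square into $T$ (the least edge of $h$ and the least edge of $h'$), and no other stage adds any. Deleting the four edges of the square disconnects its vertices from one another, so $T$ is not connected.

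The missing ingredient is a tie-breaker, as in the standard Kechris--Solecki--Todorcevic argument. Color $h$ by $\langle c(S(h)), k(h)\rangle$, coded into $\#N$, where $k(h)$ is the position of the least edge of $h$ in the given linear order restricted to the finite set $S(h)$. If $h, h'$ cross and $S(h) = S(h')$, then $k(h) \neq k(h')$, because distinct hyperplanes are disjoint sets of edges.

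With this repair your argument is a hands-on version of the paper's. The paper transfers the linear order and finite-subset coloring from $G/\pm$ to the sort of unoriented hyperplanes via \cref{thm:Lomega1omega-LO+color-finquot}. It then cites the definable KST coloring of locally finite graphs (\cref{thm:KST-lfgraph-color}) for the crossing graph. Your inlined neighborhood coloring does the same job once the tie-breaker is included, and the rest of your proof then goes through as written.
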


Recall (see \cref{thm:Lomega1omega-borel-LO+color}) that a linear ordering together with a proper $\#N$-coloring of the intersection graph on finite subsets is one possible basis for the non-canonical but Borel combinatorial structures that may always be constructed on the classes of a countable Borel equivalence relation.
Assuming this auxiliary data on $G/\pm$, which is a finite-to-one subquotient of $X^2$, is weaker than assuming it on $X$ (see \cref{thm:Lomega1omega-LO+color-finquot}), as was essentially done in \cite[3.1]{CPTT:quasitree}.

\begin{proof}
Note that the given linear ordering and $\#N$-coloring may be used to $\#N$-color the unoriented hyperplanes, or equivalently, $\neg$-invariantly color the nontrivial half-spaces
\begin{eqaligned*}
\Half(X,G) \setminus \set{\emptyset, X} = \bigsqcup_{n \in \#N} \@H_n,
\end{eqaligned*}
so that any $H, K \in \@H_n$ with the same color $n$ are nested.
Indeed, the unoriented hyperplanes are finite equivalence classes of edges, hence the given linear ordering and $\#N$-coloring may be used to construct the same auxiliary data on the set of unoriented hyperplanes (via lexicographical ordering; see \cref{thm:Lomega1omega-LO+color-finquot} for the definability formulation of this standard fact).
And by hyperplane-finiteness, every half-space is non-nested with only finitely many others (\cref{thm:medgph-hyp-fin-nested});
thus non-nestedness yields a locally finite graph on hyperplanes, which may be countably colored by \cite[4.5]{KST:chromatic} (see \cref{thm:KST-lfgraph-color} for the definability formulation of this result).

So let $\Half(X,G) \setminus \set{\emptyset, X} = \bigsqcup_n \@H_n$ be a coloring as above.
Put $\@H_{\ge n} := \bigcup_{m \ge n} \@H_m$.
We inductively construct an increasing sequence of subforests
$T_0 \subseteq T_1 \subseteq \dotsb \subseteq G$
such that the components of each $T_n$ are the $\@H_{\ge n}$-blocks (\cref{def:med-subpoc}).
The $\@H_{\ge 0}$-blocks are singletons, so start with $T_0 := \emptyset$.
Given $T_n$ whose components are the $\@H_{\ge n}$-blocks, each $\@H_{\ge n+1}$-block $C$ is a disjoint union of $\@H_{\ge n}$-blocks which are separated by the pairwise nested half-spaces in $\@H_n$.
For each two adjacent $\@H_{\ge n}$-blocks $A, B$, all edges between them belong to a unique unoriented hyperplane, which is the boundary of a unique pair of complementary half-spaces;
if these half-spaces are in $\@H_n$, then pick the least edge between $A, B$ on that hyperplane.
Take $T_{n+1}$ to be $T_n$ together with all of these chosen least edges between each two such $A, B$; then $T_{n+1}$ is a forest by the nestedness of $\@H_n$ and \cref{thm:medgph-half-nested-tree,thm:medgph-quot}.
This completes the induction.
Finally, take $T := \bigcupup_n T_n$, which is connected since $\bigcapdown_n \@H_{\ge n} = \emptyset$.

For uniform definability:
we encode a half-space as its boundary hyperplane, which is a finite set of edges
(i.e., formally, an element of the imaginary sort $[G]^{<\omega}$; see \cref{ex:Lomega1omega-finpow}).
The definition of $T_{n+1} \subseteq G$ given $T_n$ above is now straightforward to describe via a formula,
e.g., by defining in turn
\begin{eqgathered*}
\psi(a, b) \iff \text{``$a, b$ belong to adjacent $\@H_{\ge n}$-blocks separated by a half-space in $\@H_n$''},  \\
\theta(a, b, c, d) \iff \text{``$\psi(a, b)$ and $\set{c, d}$ is the least edge between the $\@H_{\ge n}$-blocks of $a, b$''}
\end{eqgathered*}
(where $A, B$ above are the $\@H_{\ge n}$-blocks of $a, b$),
whence
$c \mathrel{T_{n+1}} d \iff c \mathrel{T_n} d \OR \exists a, b\; \theta(a, b, c, d)$.
\end{proof}

\begin{remark}
\label{rmk:cptt-quot}
The auxiliary input data in \cref{thm:cptt} are preserved under passing to the quotient graph of blocks $X/\@H$, for any subpocset of half-spaces $\@H \subseteq \Half(X, G)$ (\cref{thm:medgph-quot}).
Indeed, an edge between two blocks $A, B \in X/\@H$ in the quotient graph means that $A, B$ are $G$-adjacent;
the $G$-edges between them lie on the boundary of a half-space in $\@H$, so there are only finitely many.
Thus the edge set of $X/\@H$ may be identified with a finite-to-one subquotient of $G$,
and so we may construct a linear ordering and proper $\#N$-coloring of finite subsets from that given on $G$,
again via a lexicographical ordering argument (see \cref{thm:Lomega1omega-LO+color-finquot}).

(Note that by contrast, requiring a linear ordering as well as proper $\#N$-coloring of finite subsets of the vertex set $X$, as in \cite[3.1]{CPTT:quasitree}, is \emph{not} preserved under passing to quotients.)
\end{remark}

We now extend \cref{thm:cptt} to certain median graphs, or even median algebras, that do not have finite hyperplanes, at the cost of having to provide coherent witnesses to ``local'' hyperplane-finiteness as part of the input structure.
Recall here \cref{def:med-subpoc-invsep} of invariant separability.

\begin{theorem}
\label{thm:cptt-union}
Let $X$ be a nonempty median algebra equipped with an increasing exhaustive sequence of partial equivalence relations
\begin{eqaligned*}
\emptyset = P_0 \subseteq P_1 \subseteq P_2 \subseteq P_3 \subseteq \dotsb \nearrow X^2,
\end{eqaligned*}
obeying the following properties:
\begin{enumerate}[alph]

\item \label{thm:cptt-union:subalg}
Each equivalence class $C$ of each $P_n$ is an interval-finite median subalgebra of $X$.

\item \label{thm:cptt-union:finhyp}
Each such $C$ is hyperplane-finite as a median graph.

\item \label{thm:cptt-union:invsep}
For each equivalence class $D$ of each $P_{n+1}$, the partial equivalence relation $\eval{P_n}_D \subseteq D^2$ is invariantly separable by half-spaces in $D$.

\end{enumerate}

Suppose we are also given a linear ordering,
as well as a proper $\#N$-coloring of the intersection graph on the collection of all finite subsets,
of the unoriented edge set of the median graph on each equivalence class $C$ of each $P_n$.

From this input structure, we may uniformly define (in $\@L_{\omega_1\omega}$) a tree $T$ on the vertex set $X$,
such that each equivalence class of each $P_n$ is convex in $T$.
\end{theorem}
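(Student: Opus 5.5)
We will build $T$ as an increasing union $T = \bigcupup_n T_n$, where each $T_n$ is a forest on $\dom(P_n)$ whose components are exactly the $P_n$-classes; an edge of $T_n$ joins two points of the same $P_n$-class, but need not be an edge of $X$'s median structure globally. Convexity of each $P_n$-class in $T$ will then follow automatically. Indeed, since $T_m \supseteq T_n$ for $m \ge n$ and each $T_m$ is a forest, any $T$-geodesic between two points of a $P_n$-class $C$ already lies in $T_n|_C$. This uses that the union of an increasing sequence of forests, each extending the previous one by attaching components, stays a forest whose restriction to old components is the old tree.

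Start with $T_0 = \emptyset$. For the inductive step, fix a $P_{n+1}$-class $D$. It is a hyperplane-finite median graph by (\labelcref{thm:cptt-union:subalg}), (\labelcref{thm:cptt-union:finhyp}) and \cref{thm:medalg-medgph}. Let $\@H := \Half^{P_n|_D}(D)$ be the subpocset of $P_n|_D$-invariant half-spaces. By (\labelcref{thm:cptt-union:invsep}), each $\@H$-block contains at most one $P_n$-class. By \cref{thm:medgph-quot}, the quotient $D/\@H$ is a median graph whose half-spaces correspond to $\@H$, so it is again hyperplane-finite. By \cref{rmk:cptt-quot}, the given linear order and coloring on the edges of $D$ descend to auxiliary data on the edges of $D/\@H$. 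Apply \cref{thm:cptt} to $D/\@H$ to obtain a spanning tree $S_D$ of the block graph.

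Next lift $S_D$ to $D$. Each $\@H$-block $B$ either contains a (unique) $P_n$-class $C_B$, on which $T_n$ is already a spanning tree, or it contains no point of $\dom(P_n)$. The blocks of the second kind need a canonical spanning tree. Each such block $B$ is convex in $D$, being an intersection of half-spaces, so it is itself a hyperplane-finite median graph with inherited auxiliary data, and we apply \cref{thm:cptt} to $B$ directly. Alternatively, we can refine by all half-spaces inside $B$, which gives the same result.

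The blocks of the first kind are more delicate, because $B$ may be strictly larger than $C_B$, and the points of $B \setminus C_B$ must be attached as well. Our first attempt is to view $C_B \subseteq B$ and apply \cref{thm:cptt} to the quotient of $B$ by the half-spaces of $B$ that do not cut $C_B$, taking care that $C_B$ is convex in $B$. This convexity is the point where I expect the main obstacle. If it fails, the fallback is to assign each point of $B \setminus C_B$ to its nearest points in $C_B$ via \cref{thm:medgph-psproj} and build Voronoi-style trees, breaking ties with the linear order.

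Finally, connect the blocks along $S_D$. For each $S_D$-edge between blocks $B, B'$, the corresponding $D$-edges lie on a single finite hyperplane, and we add the least one under the given order. The resulting graph is a tree on $D$: contracting each block's spanning tree yields the tree $S_D$, and a tree of trees joined by single edges is a tree. The union over all $P_{n+1}$-classes $D$ is $T_{n+1}$, which extends $T_n$ because on each $C_B$ we kept $T_n$.

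Uniform definability follows as in \cref{thm:cptt}, since every step is a countable recursion over $n$ using definable quotients, least elements, and the formula supplied by \cref{thm:cptt}.
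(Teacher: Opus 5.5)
Your architecture matches the paper's. You take forests $T_n$ whose components are the $P_n$-classes, and inside each $P_{n+1}$-class $D$ you pass to the blocks of $\Half^{P_n|_D}(D)$. You apply \cref{thm:cptt} (with \cref{rmk:cptt-quot}) to the quotient graph of blocks, and directly to blocks containing no $P_n$-class. Adjacent blocks are then joined by the least edge on the separating hyperplane. Your convexity argument and the ``tree of trees'' argument are also fine.

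The gap is the step you leave open: blocks $B$ containing a $P_n$-class $C_B$. This is the only place a new idea is needed, and your primary plan for it fails, because $C_B$ is only a median subalgebra of $B$ and is in general not convex. For instance, take $X = D = \{0,1,2\}^2$ (the $3 \times 3$ grid), let $P_1$ be the indiscrete relation on the corner square $C = \{0,2\}^2$, and let $P_n = X^2$ for $n \ge 2$. Hypotheses (a)--(c) hold, yet every nontrivial half-space of $D$ cuts $C$. So the only block is $B = D$, and quotienting $B$ by the half-spaces not cutting $C_B$ collapses everything to one block, leaving the problem untouched. In general the block of $C_B$ in that quotient is its convex hull, so you are returned to the original problem. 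Note also that the edges of $T_n$ on $C_B$ need not be edges of $D$ at all, so building the extension out of the graph structure of $B$ is the wrong frame.

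Your fallback is the right idea, and the paper shows it can be made trivial. Since $T$ is only required to be a tree on the vertex set $X$, join each $x \in B \setminus C_B$ by a single $T$-edge directly to one point $y_x \in \psproj_{C_B}(x)$. This just hangs pendant leaves on the tree $T_n|_{C_B}$, so the result is automatically a spanning tree of $B$ extending it; no Voronoi cells or paths inside $B$ are needed.

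What does need care is canonicity. The given linear order is on edges of $D$, not on vertices, so ``breaking ties with the linear order'' has to go through paths. The paper lets $y_x$ be the endpoint of the lexicographically least geodesic in $B$ from $x$ to the set $\psproj_{C_B}(x)$. This is well defined and $\mathcal{L}_{\omega_1\omega}$-definable because $\psproj_{C_B}(x)$ is finite by \cref{thm:medgph-psproj} and intervals are finite, so only finitely many geodesics compete.
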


Note again that the auxiliary input of a linear ordering and proper $\#N$-coloring is implied by having the same structure on the entire set $X$, by the usual lexicographical ordering (\cref{thm:Lomega1omega-LO+color-finquot}).

\begin{proof}
We inductively construct an increasing sequence of forests
$T_0 \subseteq T_1 \subseteq \dotsb \subseteq X^2$
such that each $T_n$ generates $P_n$,
and then take $T := \bigcupup_n T_n$.
Start with $T_0 := \emptyset$.

Suppose a forest $T_n \subseteq P_n$ generating $P_n$ is given.
Let $D$ be an equivalence class of $P_{n+1}$.
Consider the subpocset $\Half^{\eval{P_n}_D}(D) \subseteq \Half(D)$ of $\eval{P_n}_D$-invariant half-spaces in $D$ (\cref{def:med-subpoc-invsep});
by hypothesis \cref{thm:cptt-union:invsep}, the $P_n$-classes contained in $D$ are contained in distinct $\Half^{\eval{P_n}_D}(D)$-blocks.
Let $T_{n+1}$ be $T_n$ extended with the following edges, within each $P_{n+1}$-class $D$:
\begin{itemize}

\item
For each $\Half^{\eval{P_n}_D}(D)$-block $A \subseteq D$ that does not contain a $P_n$-class, apply \cref{thm:cptt} to get a tree on $A$ (using the provided linear ordering and $\#N$-coloring of finite subsets of edges in $D$, restricted to edges in the convex set $A \subseteq D$), and add the edges of this tree to $T_{n+1}$.

\item
For each $\Half^{\eval{P_n}_D}(D)$-block $A \subseteq D$ that contains a (unique) $P_n$-class $C$, by the induction hypothesis, $\eval{T_n}_A$ is a tree on $C$.
Extend this to a tree on $A$ as follows: for each $x \in A \setminus C$, join $x$ via a single edge to $y_x \in C$, where $y_x$ is the endpoint of the lexicographically least geodesic within $A$ (according to the provided ordering of edges in $D$) from $x$ to the pseudoprojection $\psproj_C(x) \subseteq C$ of $x$ onto $C$ (\cref{thm:medgph-psproj}).

\end{itemize}
Thus, we have defined a tree on each $\Half^{\eval{P_n}_D}(D)$-block.
\begin{itemize}

\item
Now apply \cref{thm:cptt,rmk:cptt-quot} to the quotient of $D$ by $\Half^{\eval{P_n}_D}(D)$ to get a spanning subtree of this quotient graph.
For each edge between two blocks $A, B$ in this tree, $A, B$ are adjacent in $D$, with finitely many edges between them lying on a hyperplane by \cref{thm:cptt-union:finhyp};
pick the least such edge (in the provided linear ordering on edges in $D$) and add it to $T_{n+1}$.

\end{itemize}
Thus, $T_{n+1}$ on each $P_{n+1}$-class $D$ is a tree of $\Half^{\eval{P_n}_D}(D)$-blocks, with a single edge chosen between each two adjacent blocks (in the tree), as well as a tree on each block; hence it is a tree on $D$.

The uniform definability of each $T_n$, hence of $T = \bigcupup_n T_n$, follows from a straightforward translation of the above construction, similarly to in the proof of \cref{thm:cptt}.
Here in the inductive step, we apply the formulas defining the construction in \cref{thm:cptt} relativized to each $\Half^{\eval{P_n}_D}(D)$-block $A$ not containing a $P_n$-class,
as well as to the quotient median graph $D/\Half^{\eval{P_n}_D}(D)$
(with the linear ordering and $\#N$-coloring on the quotient constructed from that on $D$ via \cref{rmk:cptt-quot}).
\end{proof}

\section{Merging finitely many trees}\label{sec:merging_trees}

We now combine \cref{thm:tree-link-subalgebra,thm:cptt-union} to canonically produce a tree from a linked forest of finitely many trees,
equipped with auxiliary combinatorial input that is always available on a countable Borel equivalence relation (see again \cref{thm:Lomega1omega-borel-LO+color}).

\begin{theorem}
\label{thm:tree-link-treeable}
Let $X$ be a nonempty set,
$T \subseteq X^2$ be a locally finite forest with finitely many components,
and $\link = (\link_{A,B} : A \cong B)_{A, B \in X/T}$ be a link between the $T$-components.

Suppose we are also given a linear ordering, as well as a proper $\#N$-coloring of the intersection graph on the collection of all finite subsets, of the entire set $X$.

From this input structure, we may uniformly define (in $\@L_{\omega_1\omega}$) a tree on $X/\link$, or equivalently, a forest $F \subseteq X^2$ with the same connectedness relation as $T$ such that each $\link_{A,B}$ is an isomorphism between the $F$-components $A, B$.
\end{theorem}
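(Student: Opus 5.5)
Apply \cref{thm:tree-link-subalgebra} to $(X,T,\link)$. This yields the partial equivalence relations $P_n$ on $X_\times=\prod_{A\in X/T}A$, all of whose classes live inside the median subalgebra $M:=\ang{\LINK}_{T_\boxprod}$, with $\bigcupup_n P_n = M^2$. Put $P_0:=\emptyset$ and replace $P_1$ by $P_1\cup{=_M}$, or simply re-index by adjoining the diagonal. Then hypotheses \cref{thm:cptt-union:subalg,thm:cptt-union:finhyp,thm:cptt-union:invsep} of \cref{thm:cptt-union} hold on the median algebra $M$, by \cref{thm:tree-link-subalgebra}.(\labelcref{thm:tree-link-subalgebra:subalg}--\labelcref{thm:tree-link-subalgebra:invsep}). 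The invariant separability between levels $0$ and $1$ is vacuous, and adjoining singletons preserves all properties. The only point to check is that interval-finiteness of each class follows from its being a subalgebra of the median graph $T_\boxprod$.

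For the auxiliary input, elements of $M$ are finite transversals of $\#E_T$, i.e.\ finite subsets of $X$. The given linear order and proper $\#N$-coloring of finite subsets of $X$ therefore induce, via \cref{thm:Lomega1omega-LO+color-finquot} (lexicographic ordering), the same structures on $M$. They then pass to the unoriented edges of the median graphs on the $P_n$-classes, since each edge is a pair of elements of $M$. \cref{thm:cptt-union} now uniformly defines a tree $S$ on $M$.

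It remains to transfer $S$ to a tree on $X/\link$, which we identify with $\LINK\subseteq M$ via the bijection $x\mapsto$ its $\Link$-class. I would use a Voronoi-type retraction $r:M\to\LINK$. For $\vec y\in M$, let $r(\vec y)$ be the $S$-closest point of $\LINK$, breaking ties by the lexicographic order on transversals. This is definable since $d_S$ is definable in $\@L_{\omega_1\omega}$. Each Voronoi cell $r^{-1}(\vec\ell)$ is $S$-connected: along the $S$-geodesic from $\vec y$ to $r(\vec y)$, every intermediate vertex has the same nearest point. Tie-breaking by a fixed total order keeps this true, because a vertex $\vec z$ on that geodesic satisfies $d(\vec z,\vec\ell')\ge d(\vec y,\vec\ell')-d(\vec y,\vec z)\ge d(\vec y,\vec\ell)-d(\vec y,\vec z)=d(\vec z,\vec\ell)$, with equality forcing the same comparison. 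Contracting the cells of the tree $S$ gives a tree on $\LINK$; concretely, join $\vec\ell,\vec\ell'$ iff some $S$-edge connects their cells, and acyclicity follows from connectedness of cells in a tree. Pulling back along the bijection $X/\link\cong\LINK$ gives the tree on $X/\link$. Equivalently, the forest $F$ joins $x\in A$ to $\link_{B,A}(\ell'_B)$ whenever $\vec\ell\ni x$ and $\vec\ell'$ are adjacent.

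The main obstacle is bookkeeping rather than mathematics. One must verify that the passage to $M$ and to edge sets of classes preserves the auxiliary data uniformly; note the finite-subset coloring must be taken on $[X]^{<\omega}$-sorted objects. One must also confirm that $\LINK\ne\emptyset$ meets $M$, so that the Voronoi retraction is defined everywhere, which holds since $X$ is nonempty. Finally, the definability of $S$-distance across the imaginary sort should be stated explicitly.
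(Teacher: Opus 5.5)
Your overall route is the paper's: apply \cref{thm:tree-link-subalgebra}, feed the $P_n$ and the lexicographically induced auxiliary data into \cref{thm:cptt-union} to get a tree $S$ on $M$, pass to a tree on $\LINK\cong X/\link$ by a Voronoi argument, and lift. The last step, however, has a genuine gap: the retraction $r$ is not well defined as stated. You break ties among the $S$-nearest points of $\LINK$ to $\vec y$ by taking the least one in the lexicographic order, which presupposes that this tie set has a least element. That holds if $S$ is locally finite, but you do not prove this, and \cref{thm:cptt-union} does not provide it. In its inductive step every $x\in A\setminus C$ is joined directly to $y_x\in C$, and nothing in that proof bounds how many $x$, over all levels, are joined to the same point. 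So tie sets may be infinite, and an arbitrary linear order (say a dense one) need not have a least element on them.

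Your connectivity computation is correct once $r$ exists. The repair is to make the choice locally rather than globally. For $\vec y\notin\LINK$, let $p(\vec y)$ be the $S$-neighbor strictly closer to $\LINK$ for which the edge $\{\vec y,p(\vec y)\}$ has least color. Edges at a common vertex are intersecting finite sets, so they receive distinct colors, and $p(\vec y)$ is well defined. Then let $r(\vec y)$ be the point of $\LINK$ reached by iterating $p$ exactly $d_S(\vec y,\LINK)$ times. Cells are closed under $p$, hence connected, and your contraction argument goes through. This is precisely what \cref{thm:nielsen-schreier} (the definable form of \cite[3.3]{JKL:cber}) supplies, and it is what the paper invokes at this point; note its remark that a coloring of the common-source graph on edges suffices.

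Separately, drop the modification of $P_1$. \cref{thm:cptt-union} allows partial equivalence relations, so setting $P_0:=\emptyset$ is all that is needed. Adjoining the diagonal is also not harmless. First, $P_1\cup{=_M}$ is in general not contained in $P_2$, whose domain is typically a proper subset of $M$. Second, adjoining the diagonal at every level would violate \cref{thm:cptt-union:invsep} as soon as some $x\in D\setminus\operatorname{dom}(P_n)$ lies in the convex hull in $D$ of a $P_n$-class. In that case no half-space of $D$ separates $\{x\}$ from that class.
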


\begin{proof}
Let $\LINK \subseteq \prod_{A \in X/T} A$ be the hypergraph of $\link$ (\cref{def:link}).
As described following \cref{thm:tree-link-subalgebra}, we may regard $\prod_{A \in X/T} A$ as the set of all transversals $\set{x_A}_{A \in X/T}$ of the connectedness relation $\#E_T$,
and $\LINK \subseteq \prod_{A \in X/T} A$ as consisting of the transversals which are orbits of $\link$, i.e., $\LINK \cong X/\link$.
From the given linear ordering as well as proper $\#N$-coloring of the intersection graph on all finite subsets of $X$, we may obtain the same auxiliary structure on $\prod_{A \in X/T} A$, hence on $\LINK$, via the usual lexicographical ordering (see \cref{thm:Lomega1omega-LO+color-finquot}).

Apply \cref{thm:tree-link-subalgebra} followed by \cref{thm:cptt-union} to obtain a tree $T'$ on the vertex set
$\ang{\LINK}_{T_\boxprod} \subseteq \prod_{A \in X/T} A$,
where
$T_\boxprod := \bigboxprod_{A \in X/T} \eval{T}_A$
is the product median graph of the $T$-components
(as in the proof of \cref{thm:tree-link-subalgebra}).
We have a canonical injection
$\iota : X/\link \cong \LINK `-> \ang{\LINK}_{T_\boxprod}$.
Using a Voronoi cell argument that is essentially Schreier's proof of the Nielsen--Schreier theorem,
and whose Borel version is given in \cite[3.3]{JKL:cber}
(see \cref{thm:nielsen-schreier} for the precise definable formulation),
we may define from $T'$ a tree on the image of $\iota$,
which pulls back along $\iota$ to a tree on $X/\link$.
To get the equivalent form of a forest $F$ with identical components on the $T$-components:
lift this tree on $X/\link$ to $X$,
and intersect with the connectedness relation of $T$.
\end{proof}

\section{Finite index over treeable}
\label{sec:treeable-finite-index}

From \cref{thm:tree-link-treeable}, our main result follows easily, via a reduction observed by Tserunyan \cite{tserunyan:thesis}:

\begin{theorem}
\label{thm:treeable-finite-index}
Let $E \subseteq X^2$ be a countable Borel equivalence relation on a standard Borel space $X$,
and let $F \subseteq E$ be a Borel subequivalence relation.
Suppose $F$ has finite index in $E$ and is Borel treeable.
Then $E$ is Borel treeable.
\end{theorem}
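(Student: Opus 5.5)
The plan is to reduce to the situation of \cref{thm:tree-link-treeable} and then interpret that uniformly definable construction Borel-ly on the classes of a suitable countable Borel equivalence relation. Fix a Borel treeing $G$ of $F$. The $E$-classes split into the $F$-classes they contain, and by finite index each $E$-class contains finitely many. Partitioning $X$ into the $E$-invariant Borel sets on which the index equals $k$, it suffices to handle each $k \in \#N$ separately. The $F$-classes inside one $E$-class need not be Borel-identifiable with each other, so I would first pass, following \cite[7.3]{kechris-miller:topics}, to a bireducible model in which there is a Borel link between them.

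The reduction I have in mind is the following. Let $Y$ be the standard Borel space of injective tuples $(x_1,\dots,x_k)$, one point from each $F$-class in a common $E$-class, modulo reordering; equivalently, the space of finite transversals of $F|_{[x]_E}$. Then consider
\[
  \~X := \set{(y, x) | y \in Y,\ x \in y}.
\]
This is a finite-to-one cover. Define $\~E$ on $\~X$ by $(y,x) \mathrel{\~E} (y',x')$ iff $x \mathrel{E} x'$. This is bireducible with $E$: the map $(y,x)\mapsto x$ is a Borel reduction onto all of $X$, so by the facts recalled in \cref{sec:prelim-cber}, treeability of $\~E$ transfers to $E$, since treeability is invariant under Borel bireduction of countable Borel equivalence relations.

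The trouble is that $\~E$ still has no link. The better choice is to link via the diagonal action of $F$. Here I would use \cite[7.3]{kechris-miller:topics}, which says that an $E$ with finite-index $F$ can be Borel reduced, on a complete section, to the situation where $F$-classes inside an $E$-class are matched by Borel bijections $\link_{A,B}$. These need only be bijections, not isomorphisms of $G$, and we may pull $G$ back along them. Concretely, I would take a smooth/finite-index trick: on the set $X^{(k)}$ of $k$-tuples $(x_1,\dots,x_k)$ with $x_i$ in distinct $F$-classes of one $E$-class, define $(x_i)\sim(x'_i)$ iff $x_i \mathrel{F} x'_i$ for all $i$, up to simultaneous permutation. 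The classes of $\sim$ correspond to $E$-classes, with $k!$-fold ambiguity handled by working with unordered families. Then the $i$th coordinate projections, restricted to a $\sim$-class, give $k$ copies of the $F$-classes, each carrying the tree structure pulled back from $G$. Coordinate-swapping yields the link. Concretely, on $\~X = \set{(\→x, i)}$ the forest $T$ joins $(\→x,i)$ and $(\→x',i)$ when $x_i \mathrel{G} x'_i$ and $x_j \mathrel{F} x'_j$ otherwise. This forest is not locally finite, however.

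Local finiteness is the main obstacle, and the cleaner fix is to avoid it. First reduce to $G$ locally finite, which is possible since every treeable countable Borel equivalence relation admits a locally finite Borel treeing after bireduction (\cite{JKL:cber}, \cite[Ch.~10]{kechris:cber}). Then choose a Borel selector $s$ of transversals: since $E/F$ finite means the set of $F$-classes per $E$-class is finite, the equivalence relation $E' := \set{(x,y)\in E | \ldots}$ with $F$ finite-index admits, after passing to the standard Borel space $X' = \set{(x,\{\text{$F$-classes in }[x]_E\})}$, a definable identification. I would simply invoke \cite[7.3]{kechris-miller:topics} for the existence, after bireduction, of a Borel link $\link$ between the $F$-classes in each $E$-class, making $(X, G, \link)$ classwise an instance of the input of \cref{thm:tree-link-treeable}.

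Once that is set up, apply \cref{thm:Lomega1omega-borel-LO+color} to get the auxiliary linear order and proper $\#N$-coloring on each $E$-class. Then apply \cref{thm:tree-link-treeable} via the uniform definability correspondence (\cref{def:Lomega1omega-borel}) to get a Borel tree on $X/\link$. The quotient $X/\link$ is a finite Borel quotient, so it is standard, and it is bireducible with $E$. Pulling back yields a Borel treeing of $E$.
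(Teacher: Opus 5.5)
Your final route is the paper's: a Borel link from \cite[7.3]{kechris-miller:topics}, then \cref{thm:tree-link-treeable} applied classwise with a Borel linear order and the Kechris--Miller coloring, giving a treeing of the quotient by the link, which is transferred back along the finite-to-one quotient map and the complete section using \cite[3.3]{JKL:cber}. There is, however, a genuine gap in how you assemble the input forest.

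You first fix a (locally finite) treeing $G$ of $F$, and only afterwards pass to a model carrying a link. You then assert that $(X,G,\lambda)$ is classwise an instance of \cref{thm:tree-link-treeable}. But \cite[7.3]{kechris-miller:topics} only gives a \emph{partial} link: a finite Borel $L \subseteq E$, each of whose classes meets every $F$-class of its $E$-class exactly once, whose domain $Y$ is merely a complete section of $E$. A link on all of $X$ cannot exist in general, since an $E$-class may contain $F$-classes of different cardinalities (say one finite and one infinite). After restricting to $Y$, the graph $G \cap Y^2$ need not generate $F|_Y$, so it is not a treeing of $F|_Y$. Moreover, a treeing of $F|_Y$ obtained from $G$ via \cite[3.3]{JKL:cber} need not be locally finite. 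Your preliminary reduction to a locally finite $G$ is therefore wasted; the same problem arises if you obtain it by amplifying before choosing the link.

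The fix is to reverse the order, as the paper does:
\begin{enumerate}
\item Obtain $L$ with domain $Y$.
\item Note that $F|_Y$ is treeable by \cite[3.3]{JKL:cber}.
\item Only then choose a locally finite Borel treeing $T$ of $F|_Y$ by \cite[3.12]{JKL:cber}. Alternatively, amplify the linked triple $(Y,E|_Y,F|_Y)$ with the link extended as $\lambda_{A,B}\times\mathrm{id}$, and apply \cite[3.10]{JKL:cber} to the now compressible relation.
\end{enumerate}
Then $(Y,T,\lambda)$ really is classwise an input for \cref{thm:tree-link-treeable}, and your last paragraph goes through with $Y$ in place of $X$. There, ``pulling back'' should mean transferring treeability along the reduction $Y \to Y/L$ and then to $E$ via \cite[3.3]{JKL:cber}, not literally pulling the tree back.

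Incidentally, your space $\tilde X = \{(y,x) \mid x \in y\}$ was not a dead end. Set $(y,x) \mathrel{\tilde F} (y',x') \iff x \mathrel{F} x'$ and $(y,x) \mathrel{L} (y',x') \iff y = y'$. Then $L$ is a total Borel link of $\tilde F \subseteq \tilde E$, and $\tilde F$ is treeable because $(y,x)\mapsto x$ reduces it to $F$. But there too you would need a fresh locally finite treeing of $\tilde F$, since pulling back $G$ does not give one.
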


\begin{proof}
Recall \cref{def:link} of a link $\link$ between a family of sets, and its orbit equivalence relation.
By a \defn{Borel link $\link$ of the subequivalence relation $F \subseteq E$}, we mean a Borel family of links between the $F$-classes contained within each $E$-class.
The union of the orbit equivalence relations of these $E$-classwise links is then a partial subequivalence relation $\Link \subseteq E$;
we call this union $\Link$ the \defn{orbit equivalence relation} of the link $\link$ of the subequivalence relation $F \subseteq E$,
and we may take the Borelness of $\Link \subseteq E$ to be the formal meaning of ``Borel family of links''.
A \defn{Borel partial link of $F \subseteq E$} is defined analogously.
\footnote{In \cite{tserunyan:thesis}, the term ``link'' was used for what we are calling the orbit equivalence relation of a Borel link.}

Note that an arbitrary Borel finite partial subequivalence relation $\Link \subseteq E$ is the orbit equivalence relation of a partial link of $F \subseteq E$
iff each $\Link$-class $\linkclass \subseteq X$ meets each $F$-class contained within its $E$-class $[\linkclass]_E$ exactly once.
By \cite[7.3]{kechris-miller:topics}, there exists such an $\Link \subseteq E$ whose domain $Y \subseteq X$ meets every $E$-class.
By \cite[3.3]{JKL:cber}, treeability of $F$ implies treeability of $\eval{F}_Y$, and it suffices to show treeability of $\eval{E}_Y$ which then implies treeability of $E$.

Fix a Borel locally finite treeing $T \subseteq \eval{F}_Y$ by \cite[3.12]{JKL:cber}
(or, more simply, by replacing $(Y, \eval{E}_Y, \eval{F}_Y)$ by its amplification
$(Y \times \#N, \eval{E}_Y \times \#I_\#N, \eval{F}_Y \times \#I_\#N)$
(see \cite[3.23]{kechris:cber}),
so that $\eval{F}_Y$ becomes compressible,
hence degree $\le 3$ treeable by \cite[3.10]{JKL:cber}).
By applying \cref{thm:tree-link-treeable} $\eval{E}_Y$-classwise to $T$,
the Borel link $\link$ interdefinable with $\Link$ (see \cref{ex:Lomega1omega-link}),
a Borel linear ordering of $Y$,
and a proper $\#N$-coloring of the intersection graph on finite subsets of each $\eval{E}_Y$-class given by \cite[7.3]{kechris-miller:topics},
we obtain a treeing of the quotient equivalence relation $\eval{E}_Y/\Link$ on the standard Borel quotient space $Y/\Link$,
which is Borel by uniform definability of the construction (see \cref{thm:Lomega1omega-borel}).
The quotient map $Y ->> Y/\Link$ is a Borel reduction $\eval{E}_Y \le_B \eval{E}_Y/\Link$;
hence $\eval{E}_Y$ is Borel treeable again by \cite[3.3]{JKL:cber}.
\end{proof}

\begin{remark}
\label{rmk:treeable-finite-index-borel}
For readers who might prefer to work in the locally countable Borel context,
without using the formalism of $\@L_{\omega_1\omega}$ definability on countable structures,
we briefly indicate here how to translate the proof of \cref{thm:treeable-finite-index} into the locally countable Borel formalism. 

As shown in the proof of \cref{thm:treeable-finite-index},
it suffices to handle the case where $F$ is a finite index Borel subequivalence relation of $E$ on a standard Borel space $Y$,
$\link$ is a Borel link of $F \subseteq E$ with orbit equivalence relation $\Link \subseteq E$,
and there is a Borel locally finite treeing $T \subseteq F$ of $F$.

Let $\~Y$ be the standard Borel space of finite subsets $\linkclass \subseteq Y$ that are a transversal of the restriction of $F$ to a single $E$-class,
and let $\~E \subseteq \~Y^2$ be the natural equivalence relation induced by $E$,
i.e.,
\begin{eqaligned*}
\linkclass \mathrel{\~E} \linkclass'  \coloniff  [\linkclass]_E = [\linkclass']_E
\end{eqaligned*}
for $\linkclass, \linkclass' \in \~Y$.
For each $E$-class $C \in Y/E$,
as described following \cref{thm:tree-link-subalgebra},
we may regard the corresponding $\~E$-class
$\~C := \set{\linkclass \in \~Y | [\linkclass]_E = C}$
as an alternate representation of the product set
$\prod_{A \in C/F} A$;
and the subspace $Y/\Link \subseteq \~Y$ of $\Link$-classes,
i.e., orbits of $\link$,
corresponds to the hypergraph $\LINK$ of the link $\link$ (\cref{def:link}).
We thus denote $Y/\Link \subseteq \~Y$ also by $\LINK \subseteq \~Y$.
We let $T_\boxprod$ denote the $\~E$-classwise median graph given on each class $\~C \in \~Y/\~E$ as the product of the $T$-components in the corresponding $C \in Y/E$ (as in \labelcref{thm:tree-link-subalgebra}).
We also denote by $\ang{\LINK}_{T_\boxprod} \subseteq \~Y$ the $\~E$-classwise median subalgebra of $(\~Y, T_\boxprod)$ generated by $\LINK$.
Running the proof of \cref{thm:tree-link-subalgebra} $E$-classwise,
we obtain a sequence of Borel partial subequivalence relations
\begin{eqaligned*}
({=_\LINK}) \subseteq P_1 \subseteq P_2 \subseteq P_3 \subseteq \dotsb \subseteq \~E
\end{eqaligned*}
obeying the conclusions of \labelcref{thm:tree-link-subalgebra} on each $\~E$-class,
such that $\bigcupup_n P_n$ is the restriction of $\~E$ to $\ang{\LINK}_{T_\boxprod}$.

Now apply the proof of \cref{thm:cptt-union} $\smash{\eval{\~E}_{\ang{\LINK}_{T_\boxprod}}}$-classwise to the $P_n$'s,
where the auxiliary linear ordering and proper $\#N$-coloring of finite subsets are given by a Borel linear ordering on $\~Y$ and \cite[7.3]{kechris-miller:topics}.
The proof of \cref{thm:cptt-union} involves two different applications of \cref{thm:cptt}.
The first translates to directly applying \cref{thm:cptt}, i.e., \cite[3.5]{CPTT:quasitree}, classwise to the induced median graphing on a Borel subrelation of $\~E$ (whose classes are the $\Half^{\eval{P_n}_D}(D)$-blocks not containing a $P_n$-class, in the notation of \labelcref{thm:cptt-union}).
The second application of \cref{thm:cptt} in the proof of \labelcref{thm:cptt-union} is to a quotient median graph (of $\Half^{\eval{P_n}_D}(D)$-blocks);
rather than arguing that the auxiliary linear ordering and proper $\#N$-coloring on edges passes to this quotient graph, as in \cref{rmk:cptt-quot} used in \labelcref{thm:cptt-union}, one may also view this as a ``relativized'' version of the proof of \labelcref{thm:cptt} that only iterates through half-spaces in the subpocset ($\Half^{\eval{P_n}_D}(D)$) we are quotienting by.

The output of \cref{thm:cptt-union} is a Borel treeing of $\~E$ restricted to the complete section $\smash{\ang{\LINK}_{T_\boxprod}} \subseteq \~Y$.
It is easily seen that $(Y,E)$ is Borel bireducible to $(\~Y,\~E)$;
thus $E$ is Borel treeable by \cite[3.3]{JKL:cber}.
\end{remark}

\begin{remark}\label{rmk:invariant-free-complement}
Let $Y$, $E$, $F$, $T$ and $L$ be as in \cref{rmk:treeable-finite-index-borel}, and let $Q_n\subseteq F$ and $\~Q_n \subseteq E$ be defined as in \cref{rmk:Q_nDefinition}.
The proof of \cref{thm:treeable-finite-index} shows that each $Q_n$ is \emph{$L$-invariantly} freely complemented within $Q_{n+1}$. 
This follows from the proof of \cref{thm:tree-link-treeable} using the fact that, in the treeing $T'$ on $\langle \Lambda \rangle_{T_\boxprod}$, each $P_n$-class (and each $P_{n+1}$-class) is convex by \cref{thm:cptt-union}.
Indeed, when defining the treeing on $Y/L$ via the Voronoi cell argument as in the proof \cref{thm:tree-link-treeable}, by working stage-by-stage one may ensure that the classes of $\~{Q}_n/L$ and of $\~{Q}_{n+1}/L$ are both convex, hence the final lift to an $L$-invariant treeing of $F$ on $Y$ witnesses that $Q_n$ is $L$-invariantly freely complemented in $Q_{n+1}$.
\end{remark}

\section{Some consequences}
\label{sec:corollaries}

\subsection{Functoriality}\label{subsec:functoriality}

Our positive answer to \cref{question:intro} takes the form of the following functor.
To each triple $(X,I,(T_i)_{i\in I})$, where $X$ and $I$ are nonempty sets with $I$ finite, and each $T_i$ is a locally finite tree on $X$,
\cref{thm:tree-link-subalgebra} associates a triple $\mathsf{M}(X, I, (T_i)_{i\in I})=(M,X,P)$ with the following properties:
\begin{itemize}
\item $M$ is an interval-finite median algebra containing $X$ as a generating subset.
\item The identity map on $X$ extends uniquely to a median homomorphism from $M$ onto each input tree $(X,T_i)$, and these homomorphisms jointly separate points of $M$.
\item $P=(P_n)_{n\geq 1}$ is an increasing sequence of partial equivalence relations with union $M^2$.
Each $P_n$-class $C$ is a median subalgebra of $M$ with generating set $C\cap X$, and $C$ is hyperplane-finite as a median graph;
$\eval{P_{n-1}}_C$ is invariantly separable by half-spaces in $C$ for $n\geq 2$.
\end{itemize}

Concretely, $M$ is the median subalgebra generated by the diagonal in the product of the trees $T_i$, with $X$ identified with the diagonal via $x\mapsto(x)_{i\in I}$,
and with the coordinate projections giving the median homomorphisms onto the $T_i$'s.
The filtration $P$ is obtained by applying \cref{thm:tree-link-subalgebra} to the disjoint copies of these trees on $X\times I$, with the link $(x,i)\mapsto(x,j)$.

Such triples $(X,I,(T_i)_{i\in I})$ form the objects of the domain category for the functor $\mathsf{M}$.
The morphisms $(X,I,(T_i)_{i\in I}) -> (Y,J,(S_j)_{j\in J})$ are pairs $(f,\sigma)$ of bijections $f:X\to Y$ and $\sigma:I\to J$
such that $f$ is an isomorphism from $T_i$ to $S_{\sigma(i)}$ for every $i\in I$.
(Thus, the category is a groupoid.)

The objects of the codomain category are triples $(M,X,P)$, where $M$ is an interval-finite median algebra, $X\subseteq M$ is a generating subset, and $P=(P_n)_{n\geq 1}$ is an increasing sequence of partial equivalence relations with union $M^2$.
A morphism from $(M,X,P)$ to $(N,Y,Q)$ is a median isomorphism $M\to N$, taking $X$ to $Y$ and taking $P_n$ to $Q_n$ for every $n\geq 1$.

Writing $\mathsf{M}(Y,J,(S_j)_{j\in J})=(N,Y,Q)$, we assign to each morphism $(f,\sigma):(X,(T_i)_{i\in I})\to(Y,(S_j)_{j\in J})$ the median isomorphism $\mathsf{M}(f,\sigma):M\to N$
given by $(x_i)_{i\in I}\mapsto(f(x_{\sigma^{-1}(j)}))_{j\in J}$, which takes $P_n$ to $Q_n$ for every $n\geq 1$.
The following is now an immediate consequence of uniform definability of the construction in \cref{thm:tree-link-subalgebra}.

\begin{corollary}
\label{cor:tree-family-filtration}
The assignment $\mathsf{M}$ defines a functor between these categories (groupoids).
\end{corollary}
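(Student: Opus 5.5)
The plan is to reduce functoriality to two facts: uniform definability of the construction in \cref{thm:tree-link-subalgebra}, and the explicit formula for $\mathsf{M}(f,\sigma)$. The key point is that $\mathcal{L}_{\omega_1\omega}$-definable constructions commute with isomorphisms of the input structures.

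First, I will encode the input. Given $(X,I,(T_i)_{i\in I})$, consider the forest $\bigsqcup_i T_i$ on $X\times I$ with components $X\times\{i\}$ and link $\link_{i,j}(x,i)=(x,j)$. A morphism $(f,\sigma)$ induces the bijection $f\times\sigma : X\times I\to Y\times J$. This bijection is an isomorphism of forests, since $f$ maps $T_i$ onto $S_{\sigma(i)}$. It also carries the link to the link, because $(f\times\sigma)(x,j)=(f(x),\sigma(j))$ and the orbit equivalence relations correspond. So $f\times\sigma$ is an isomorphism of the input structures $(T,\link)$ of \cref{thm:tree-link-subalgebra}.

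Second, I will use the general principle that isomorphisms preserve every $\mathcal{L}_{\omega_1\omega}$ formula. This applies also to relations on imaginary sorts such as the finite-subset sort used in \cref{eq:tree-link-subalgebra:def}. So $f\times\sigma$, extended to transversals, carries the defined median operation on $X_\times$ to that on $Y_\times$, carries $\LINK$ to $\LINK$, and carries each $P_n$ to $Q_n$. On transversals the induced map sends $\{(x_i,i)\}_i$ to $\{(f(x_i),\sigma(i))\}_i$. As a selector this is exactly $(x_i)_i\mapsto (f(x_{\sigma^{-1}(j)}))_j$. Hence $\mathsf{M}(f,\sigma)$ agrees with the map induced by $f\times\sigma$, and it follows that:
\begin{itemize}
\item $\mathsf{M}(f,\sigma)$ is a median isomorphism $M\to N$, since it is a bijection of products preserving coordinatewise medians and the generated subalgebras;
\item it maps the diagonal copy of $X$ onto the diagonal copy of $Y$;
\item it takes $P_n$ to $Q_n$.
\end{itemize}
Thus $\mathsf{M}(f,\sigma)$ is a morphism in the codomain category.

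Finally, I will check the functor laws directly from the formula. The identity pair $(\mathrm{id},\mathrm{id})$ gives the identity map. For a composite $(g,\tau)\circ(f,\sigma)=(gf,\tau\sigma)$, applying the formula twice yields $(g(f(x_{\sigma^{-1}\tau^{-1}(k)})))_k$, which is the formula for $(gf,\tau\sigma)$. Both checks are routine. The only step needing care is the second: making precise that definable relations on the imaginary sort of transversals are transported by the induced bijection. I will argue this via the tuple-lifts, since an isomorphism of structures preserves satisfaction of each $\mathcal{L}_{\omega_1\omega}$ formula on tuples.
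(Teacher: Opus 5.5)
Your proposal is correct and takes the same approach as the paper, which simply says that functoriality is an immediate consequence of the uniform definability of the construction in \cref{thm:tree-link-subalgebra}. You additionally spell out what the paper leaves implicit: $(f,\sigma)$ induces an isomorphism $f\times\sigma$ of the linked forests on $X\times I$ and $Y\times J$; this isomorphism transports the $\mathcal{L}_{\omega_1\omega}$-definable relations $P_n$ to $Q_n$ via their tuple-lifts; the induced map on transversals is exactly the stated formula for $\mathsf{M}(f,\sigma)$; and the identity and composition laws hold by direct computation.
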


\subsection{Treeability of groups}
\cref{thm:treeable-finite-index} implies that several notions of treeability for groups are preserved under finite index extensions, even in the locally compact setting.

A locally compact second countable group $G$ is called \defn{Borel strongly treeable} if for every free Borel action of $G$ on a standard Borel space $X$, the associated orbit equivalence relation is Borel treeable;
$G$ is called \defn{measure strongly treeable} if, for every free Borel action of $G$ on $X$ and every Borel probability measure $\mu$ on $X$,
the orbit equivalence relation is Borel treeable after restricting to some $G$-invariant $\mu$-conull Borel subset of $X$;
$G$ is called \defn{strongly treeable} if the same condition holds, but with $\mu$ only ranging over $G$-invariant Borel probability measures; see \cite[Definition~B.1]{CGMTD}.

\begin{corollary}
\label{cor:strongly_treeable}
Let $G$ be a locally compact second countable group. 
If $G$ contains a finite index open subgroup $H \le G$ that is strongly treeable, then $G$ itself is strongly treeable.
The same holds for measure strong treeability and Borel strong treeability.
In all cases this implies that $G$ has fixed price, provided $G$ is unimodular.
\end{corollary}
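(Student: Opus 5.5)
The plan is to deduce all three statements from \cref{thm:treeable-finite-index}, passing to a cross section of the $G$-action. Let $G$ act freely and Borel on $X$, with orbit equivalence relation $E_G$, and let $H \le G$ be the finite index open subgroup. The first step is to choose a Borel lacunary (discrete) cross section $Y \subseteq X$ for the $H$-action; this exists by the standard Kechris/Forrest theorem. Since $H$ is open of finite index, $H$ acts with open, hence countable-index-free, orbits. I would rather pick $Y$ to be a cross section for $H$ and note that it is then also a complete section for $E_G$. Because $G = \bigsqcup_{k=1}^m g_k H$, every $G$-orbit is a finite union of $m$ $H$-orbits.

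Put $E := \eval{E_G}_Y$ and $F := \eval{E_H}_Y$. Both are countable Borel equivalence relations, and $F \subseteq E$ has index at most $m$. Treeability transfers along cross sections via Borel bireducibility and \cite[3.3]{JKL:cber}. Concretely, $E_G$ is treeable (after discarding a null invariant set, in the measured settings) iff $E$ is, and likewise $E_H$ iff $F$. For the Borel strong case the argument is then immediate: the restricted action of $H$ is free, so $E_H$ is Borel treeable, hence $F$ is. By \cref{thm:treeable-finite-index}, $E$ is treeable, and therefore so is $E_G$. One must check that treeability of a cross-section restriction is equivalent to treeability of the full orbit relation. This is the standard fact that a locally compact orbit relation is treeable iff one (equivalently every) lacunary cross section is; it is used in \cite{CGMTD}, and I would cite it there.

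For the measured versions, take $\mu$ to be a ($G$-invariant) probability measure on $X$; it is then $H$-invariant as well. Strong treeability of $H$ gives an $H$-invariant conull Borel set $X_0$ on which $E_H$ is treeable. Set $X_1 := \bigcap_k g_k X_0$. This is conull and $G$-invariant, since $G$ permutes the finitely many translates of the $H$-invariant set $X_0$ up to the cosets. On $X_1$ the relation $E_H$ remains treeable, because it is a restriction to an invariant set. The cross-section argument above then applies verbatim. In the measure strong case $\mu$ need not be invariant, but the same intersection works, since $g_k X_0$ is $g_k\mu$-conull. The fix is to replace $\mu$ by the quasi-invariant average $\frac1m\sum_k g_k^{-1}\mu$ before invoking the hypothesis on $H$. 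The main obstacle is this bookkeeping: making sure the conull set can be chosen $G$-invariant, and that the invariant-measure case really only requires $H$-invariant measures.

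Finally, fixed price for unimodular $G$ follows from the known result that strongly treeable unimodular lcsc groups have fixed price. This is Gaboriau's theorem for treeable relations, extended to the lcsc setting via cross sections in \cite{CGMTD}. It applies because $G$ has just been shown to be strongly treeable, and the other two notions imply the plain one.
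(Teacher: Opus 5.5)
Your proposal is correct and is essentially the paper's argument. You take a countable complete section for the $H$-action, which is also one for the $G$-action. You apply \cref{thm:treeable-finite-index} to the finite index inclusion of the restricted orbit relations and lift the treeing back; the paper does the lifting by attaching each remaining point as a leaf via Lusin--Novikov, and cites \cite{kechris:loccpt} and \cite[A.6]{CGMTD} for the cross section, as you propose.

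The only difference is bookkeeping in the measured cases. The paper works on the saturation $GY$ of the $H$-invariant conull set $Y$ and takes the cross section inside $Y$, so conullness is automatic and no averaging of $\mu$ is needed. Your intersection $\bigcap_k g_k X_0$ also works but needs your averaging step. That averaged measure is not actually quasi-invariant, but it does make each $g_kX_0$ conull, which is all you use.
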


This is new even in the case where $G$ is a countable discrete group.
The converse to \cref{cor:strongly_treeable}, i.e., that these three notions all descend to finite index open subgroups, is well-known; see \cite[Theorem~B.3]{CGMTD}.

\begin{proof}
Let $X$ be a standard Borel space with a free Borel action $G \actson X$, let $\#E_{G \actson X} \subseteq X^2$ be the orbit equivalence relation of this action, and let $Y \subseteq X$ be an $H$-invariant Borel subset such that the action $H \actson Y$ induces a Borel treeable orbit equivalence relation $\#E_{H \actson Y} \subseteq Y^2$.
By \cite{kechris:loccpt} and \cite[A.6]{CGMTD}, there is a Borel complete section $Z \subseteq Y$ of $\#E_{H \actson Y}$ meeting every $H$-orbit in countably many points, such that $\eval{\#E_{H \actson Y}}_Z$ is Borel treeable.
Then $\eval{\#E_{H \actson Y}}_Z$ is a finite index subequivalence relation of the countable Borel equivalence relation $\eval{\#E_{G \actson X}}_Z$, which is hence also Borel treeable by \cref{thm:treeable-finite-index}.
Hence so is $\#E_{G \actson GY}$, by adding an edge from each $x \in GY \setminus Z$ to any point in $Z \cap Gx$ chosen using the Lusin--Novikov theorem (see \cite[18.10]{bible}).
(Note that $GY \subseteq X$ is Borel, by translating $Y$ by finitely many coset representatives for $H \le G$.)

If now $H$ is Borel strongly treeable, then for any such free Borel $G$-space $X$, taking $Y = X$ above, we obtain that the orbit equivalence relation of $H \actson X$, hence also $G \actson X$, is Borel treeable; thus $G$ is Borel strongly treeable.
Similarly for (measure) strong treeability, given a measure $\mu$ on $X$, we have a $H$-invariant $\mu$-conull Borel $Y \subseteq X$ such that $\#E_{H \actson Y}$ is Borel treeable, whence the above gives that $GY \subseteq X$ is a $G$-invariant $\mu$-conull Borel subset such that $\#E_{G \actson GY}$ is Borel treeable.
\end{proof}

We also note that combining \cref{intr:thm:treeable-finite-index} with \cite[Proposition 25.6]{kechris-miller:topics} gives the cost of finite index extensions of treeable relations in the probability measure preserving setting.
This answers a special case of \cite[Problem 25.5]{kechris-miller:topics}:
for pmp countable Borel equivalence relations $F\subseteq E$ on $(X,\mu )$, if $F$ is treeable and of index $n$ in $E$ then $C_\mu (E)-1=\frac{1}{n}(C_\mu (F)-1)$.

\subsection{Treeability and homomorphisms} In the context of \cref{thm:treeable-finite-index}, the identity map on $X$ is a Borel homomorphism from $F$ to $E$ that descends to a finite-to-one, surjective map $X/F ->> X/E$ on the quotients.
The following consequence of \cref{thm:treeable-finite-index} shows that treeability descends through arbitrary Borel homomorphisms with these properties. 

\begin{corollary}
\label{cor:finite-to-one_extension}
Let $F$ and $F_0$ be countable Borel equivalence relations on standard Borel spaces $X$ and $X_0$, respectively.
Suppose that $\pi:X\to X_0$ is a Borel homomorphism $F -> F_0$ that descends to a finite-to-one, surjective map $X/F ->> X_0/F_0$ between the quotients.
If $F$ is Borel treeable, then $F_0$ is Borel treeable.
\end{corollary}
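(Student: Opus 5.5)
The plan is to build a finite index extension of $F$ that is Borel treeable, apply \cref{thm:treeable-finite-index}, and then push treeability down to $F_0$. The extension will be the pullback $E := (\pi\times\pi)^{-1}(F_0) \subseteq X^2$, i.e., $x \mathrel{E} y$ iff $\pi(x) \mathrel{F_0} \pi(y)$.

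First, $E$ is a countable Borel equivalence relation containing $F$, since $\pi$ is a homomorphism. Countability of $E$-classes comes from the hypotheses on the quotient map. Each $F_0$-class $C$ has finitely many preimage $F$-classes. The $E$-class of $x$ is $\pi^{-1}([\pi(x)]_{F_0})$, which is the union of those $F$-classes whose image meets $[\pi(x)]_{F_0}$. Because $\pi$ descends to a map on quotients, these are exactly the $F$-classes mapping to $C$, and there are only finitely many of them. Hence $E$-classes are finite unions of countable sets, so they are countable. This also shows directly that $F$ has finite index in $E$. Since $F$ is Borel treeable, \cref{thm:treeable-finite-index} gives that $E$ is Borel treeable.

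Second, transfer treeability to $F_0$. The map $\pi$ is a Borel reduction $(X,E) \le_B (X_0,F_0)$, since $x \mathrel{E} y \iff \pi(x)\mathrel{F_0}\pi(y)$ by definition. Surjectivity of $X/F \to X_0/F_0$ means $\pi(X)$ meets every $F_0$-class, so $\pi$ is a Borel bireduction between countable Borel equivalence relations. By \cite[3.3]{JKL:cber}, Borel treeability is invariant under Borel bireducibility (Borel reducibility into a treeable relation suffices), so $F_0$ is Borel treeable.

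The only point needing care is invoking \cite[3.3]{JKL:cber} in the right direction. That result says that if $E_0 \le_B E_1$ are countable Borel equivalence relations and $E_1$ is treeable, then so is $E_0$. We need the reverse direction: $E$ treeable and $E \le_B F_0$ bireducible should give $F_0$ treeable. For this I would use the inverse reduction $g : F_0 \le_B E$. It is obtained by Lusin--Novikov uniformization of $\pi$ restricted to the saturation, using that $\pi$ has countable fibers modulo $F_0$-classes. More simply, I would apply the complete-section form of the result: $\pi(X)$ is an analytic complete section, and one can shrink to a Borel set. Alternatively, take a Borel selector $s : X_0 \to X$ with $\pi(s(z)) \mathrel{F_0} z$, obtained by uniformizing the Borel set $\{(z,x) : \pi(x) \mathrel{F_0} z\}$, which has countable sections since $E$-classes are countable. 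Then $s$ is a Borel reduction $F_0 \le_B E$, and \cite[3.3]{JKL:cber} applies directly.
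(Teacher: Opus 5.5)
Your proof is correct and follows the same route as the paper. You pull back $F_0$ along $\pi$ to get a countable Borel equivalence relation in which $F$ has finite index, apply \cref{thm:treeable-finite-index}, and then transfer treeability to $F_0$ through the Borel bireduction $\pi$ and \cite[3.3]{JKL:cber}. Your Lusin--Novikov selector $s\colon X_0 \to X$ with $\pi(s(z)) \mathrel{F_0} z$ is a valid way to make the reverse reduction $F_0 \le_B \pi^{-1}(F_0)$ explicit; the paper leaves this implicit in its definition of a Borel bireduction.
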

\begin{proof}
$\pi^{-1}(F_0) \subseteq X^2$ is a countable Borel equivalence relation containing the finite index Borel treeable subequivalence relation $F$, whence it is Borel treeable by \cref{thm:treeable-finite-index};
and $\pi$ is a Borel bireduction from $\pi^{-1}(F_0)$ to $F_0$, whence $F_0$ is Borel treeable by \cite[3.3]{JKL:cber}.
\end{proof}

\appendix
\section{Review of \texorpdfstring{$\mathcal{L}_{\omega_1\omega}$}{L\textomega\textoneinferior\textomega} definability and Borel structurability}
\label[appendix]{sec:Lomega1omega}

We review here the key concepts in the infinitary logic $\@L_{\omega_1\omega}$ and Borel structurability theory that underpin the formalism of ``uniform definability on countable structures'' used in this paper (see \cref{sec:prelim-cber}).
For more detailed references, see
\cite{marker:inflog},
%\cite{montalban:cst2},
\cite{chen:etalejt},
\cite{chen-kechris:structurable},
\cite{banerjee-chen:strinterp}.

\begin{definition}
\label{def:Lomega1omega}
The \defn{countably infinitary first-order logic} $\@L_{\omega_1\omega}$,
over a countable first-order language $\@L$,
is the extension of standard first-order logic with countably infinite conjunctions $\bigwedge_{i \in \#N}$ and disjunctions $\bigvee_{i \in \#N}$.
We require that every $\@L_{\omega_1\omega}$ formula $\phi(x_1, \dotsc, x_n)$ contain only finitely many free variables $x_1, \dotsc, x_n$.
A \defn{sentence} is a formula with no free variables;
a \defn{theory} is a set of sentences (called the axioms of the theory).
We will assume that all $\@L_{\omega_1\omega}$ theories are countable.
\footnote{This is needed for a clean correspondence with the Borel context.
Thus, one may as well replace a theory with a single sentence, namely the conjunction of its axioms; however, theories are more convenient to work with in practice.}

If $\@M$ is a model of a theory, on the underlying set $M$, and $\phi = \phi(x_1, \dotsc, x_n)$ is a formula, then we write
$\phi^\@M \subseteq M^n$
for the \defn{interpretation of $\phi$ in $\@M$},
i.e., the $n$-ary relation defined by $\phi$.
\end{definition}

\begin{example}
\label{ex:Lomega1omega-conngraph}
The following infinite formula in the language of graphs $\@L_\!{graph} = \set{G}$ asserts that $x, y$ belong to the same connected component:
\begin{eqaligned*}
\phi_\!{path}(x, y) := \bigvee_{n \in \#N} \exists x_0, \dotsc, x_n\, \paren[\big]{
    (x = x_0) \wedge (x_0 \mathrel{G} x_1) \wedge \dotsb \wedge (x_{n-1} \mathrel{G} x_n) \wedge (x_n = y)
}.
\end{eqaligned*}
Thus $\phi_\!{path}$ defines a binary relation, namely the connectedness relation $\#E_G$, in every graph $(X,G)$.
\end{example}

\begin{example}
\label{ex:Lomega1omega-graph-metric}
The relations ``$d_G(x, y) \le n$'' in a graph may be defined by formulas $\phi_\!{\$n-path}(x, y)$ analogous to $\phi_\!{path}$, with the above disjunction truncated at $n$.

Using such formulas, one may in turn define the betweenness relation
``$z \in [x, y]$''
(\cref{def:graph-interval}),
and hence the graph of the median operation
``$\ang{x, y, z} = m$''
in a median graph
(\cref{def:medgph}).
\end{example}

\begin{definition}
\label{def:Lomega1omega-def}
Given an $\@L_{\omega_1\omega}$ theory $\@T$,
and an assignment to each model $\@M$ of an $n$-ary relation
$R_\@M \subseteq M^n$,
we say these relations are \defn{uniformly definable (in $\@L_{\omega_1\omega}$)} if there is a \emph{single} formula $\phi(x_1, \dotsc, x_n)$ (in the language of $\@T$) defining $R_\@M$ in all models $\@M$.

Similarly, an assignment of $n$-ary functions $f_\@M : M^n -> M$ is \defn{uniformly definable} if the graphs of the $f_\@M$ are uniformly definable as $(n+1)$-ary relations.

If in each model $\@M$ of $\@T$, we may uniformly define a family of relations and functions, and these fit together to form a model $\@M'$ of another theory $\@T'$, then we say that we may \defn{uniformly define a model of $\@T'$ in every model of $\@T$}.

Thus for instance, the preceding example shows that we may uniformly define the induced median algebra structure on every median graph.
Conversely, it is easily seen that we may uniformly define the edge relation in interval-finite median algebras (see \cref{thm:medalg-medgph}).
Moreover, these two definitions are inverses of each other: the edge relation in the median algebra defined from a median graph recovers the original graph, and vice-versa.
We express this by saying that (the theories of) median graphs and interval-finite median algebras are \defn{interdefinable}.
\end{definition}

\begin{example}
\label{ex:Lomega1omega-link}
Let $X$ be a set equipped with an equivalence relation $F$ together with a link
$\link = (\link_{A,B} : A \cong B)_{A, B \in X/F}$
between the $F$-classes (\cref{def:link}).
This may be treated as a structure in the language
$\@L_\!{linkedequiv} = \set{F, \link}$,
where $\link$ is treated as a binary function
$(x,y) |-> \link_{[x]_F, [y]_F}(x)$.
Alternatively, we may encode $\link$ by its orbit equivalence relation
$\Link \subseteq X^2 = (\bigsqcup_{A \in X/F} A)^2$;
thus $\Link$ is a binary relation.
These two structures representing linked equivalence relations,
$(X, F, \link)$ and $(X, F, \Link)$,
are interdefinable, via formulas that may be read off of \cref{def:link}.
\end{example}

The following formalizes the notion of a ``locally countable Borel structure'', and its connection to definability in countable structures:

\begin{definition}
\label{def:Lomega1omega-borel}
Let $(X,E)$ be a countable Borel equivalence relation, $\@T$ be an $\@L_{\omega_1\omega}$ theory.
A \defn{Borel structuring $\@M$ of $E$ by models of $\@T$} is a ``Borel assignment'' to each $E$-class $C \in X/E$ of a countable model $\@M_\@C$ of $\@T$ on the underlying set $C$.

Here ``Borel assignment'' may be taken to mean that the disjoint union of the $\@M_C$'s is a Borel structure on $X$, or equivalently (if $E$ is aperiodic) that any Borel family of enumerations
$(f_x : \#N \cong [x]_E)_{x \in X}$
from the Lusin--Novikov theorem yields a Borel map
$x |-> f_x^{-1}(\@M_{[x]_E})$
from $X$ to the standard Borel space of $\@L$-structures on $\#N$.
(See \cite[3.1]{banerjee-chen:strinterp}.)

Thus for instance, a Borel treeing of $E$ is a structuring by (models of the theory of) trees.
\end{definition}

\begin{proposition}[see \cite[3.4]{banerjee-chen:strinterp}]
\label{thm:Lomega1omega-borel}
Let $(X,E)$ be a countable Borel equivalence relation,
$\@M = (\@M_C)_{C \in X/E}$ be a Borel structuring of $E$ by models of some $\@L_{\omega_1\omega}$ theory $\@T$.
Suppose that we may uniformly define, in each (countable) model $\@N$ of $\@T$, a model $\@N'$ of some other theory $\@T'$.
Then the family of models $(\@M_C')_C$ defined from the $\@M_C$'s yields a Borel structuring of $E$ by models of $\@T'$.
\end{proposition}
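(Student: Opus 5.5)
The plan is to reduce everything to the classical fact that satisfaction of a fixed $\@L_{\omega_1\omega}$ formula is a Borel condition on the standard Borel space of countable structures. This is then transported to $X$ via a Borel family of enumerations of the $E$-classes.

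First, split $X$ into the $E$-invariant Borel pieces $X_k$ ($k \in \#N \cup \set{\infty}$) on which the $E$-classes have size exactly $k$; this is Borel by Lusin--Novikov. It suffices to treat each piece separately. On each $X_k$, Lusin--Novikov (see \cite[18.10]{bible}) gives a Borel family of bijections $f_x : k \cong [x]_E$ (with $k = \#N$ when $k = \infty$). The associated maps $E \ni (x,y) \mapsto f_x^{-1}(y) \in k$ are Borel. By \cref{def:Lomega1omega-borel}, the map $g : x \mapsto f_x^{-1}(\@M_{[x]_E})$ is then a Borel map into the standard Borel space $\mathrm{Mod}_k(\@L)$ of $\@L$-structures on $k$. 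For finite $k$ the same definition makes sense, and Borelness follows directly from Borelness of the disjoint union structure on $X$.

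Next, I will prove by induction on formulas that for each $\@L_{\omega_1\omega}$ formula $\phi(v_1, \dotsc, v_n)$ the set
\begin{eqaligned*}
\mathrm{Sat}_\phi := \set{(\@N, a_1, \dotsc, a_n) \in \mathrm{Mod}_k(\@L) \times k^n | \@N \models \phi(a_1, \dotsc, a_n)}
\end{eqaligned*}
is Borel. The cases are as follows. Atomic formulas give clopen sets. Negation and countable conjunctions and disjunctions preserve Borelness. A quantifier $\exists v$ becomes a countable union over $v \in k$, and dually $\forall v$ becomes a countable intersection. I expect this to be the only step needing care, and it is routine since the universe $k$ is fixed and countable.

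Finally, let $\phi$ uniformly define one of the relations $R$ of $\@T'$, say of arity $n$. Put
\begin{eqaligned*}
R' := \set{(x_1, \dotsc, x_n) \in X^n | x_1 \mathrel{E} \dotsb \mathrel{E} x_n \AND (g(x_1), f_{x_1}^{-1}(x_2), \dotsc, f_{x_1}^{-1}(x_n)) \in \mathrm{Sat}_\phi}.
\end{eqaligned*}
Here the first coordinate $x_1$ is mapped to $f_{x_1}^{-1}(x_1)$. This set is Borel as the preimage of a Borel set under a Borel map on the Borel set $E^{(n)}$.

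It remains to check that $R'$ restricted to each class $C$ equals $\phi^{\@M_C} = R_{\@M'_C}$. The choice of base point $x_1$ is irrelevant, because $f_y^{-1} \circ f_x$ is an isomorphism $f_x^{-1}(\@M_C) \cong f_y^{-1}(\@M_C)$, and $\@L_{\omega_1\omega}$ satisfaction is isomorphism-invariant (again by induction on formulas). Moreover $f_{x_1}$ itself is an isomorphism onto $\@M_C$, so $R'$ agrees with $\phi^{\@M_C}$ on $C$. Function symbols of $\@T'$ are handled through their graphs. Hence the disjoint union of the $\@M'_C$ is a Borel structure on $X$, i.e., a Borel structuring of $E$ by models of $\@T'$.
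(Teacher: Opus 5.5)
Your proof is correct. The paper gives no argument of its own for this proposition and simply cites Banerjee--Chen. What you wrote is the standard argument:
\begin{itemize}
\item pull the structuring back along a Borel family of Lusin--Novikov enumerations $f_x$ to get a Borel map into the space of structures on $k$;
\item use that satisfaction of a fixed $\mathcal{L}_{\omega_1\omega}$ formula is Borel there, by induction on formulas;
\item transport back using that $f_{x_1}$ is an isomorphism onto $\mathcal{M}_C$.
\end{itemize}
Your remark that the base point does not matter is true but unnecessary, since the last step only uses $f_{x_1}$ itself.

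A slightly leaner variant runs the same induction directly on $X$. One shows that for every formula $\phi(v_1,\dots,v_n)$ with $n \ge 1$, the set of $E$-related tuples satisfying $\phi$ in their class model is Borel in $X^n$. The atomic case is exactly the hypothesis that the disjoint union of the $\mathcal{M}_C$ is a Borel structure. A quantifier $\exists v$, relativized to the class of $v_1$, becomes a countable union over $m$ of the sets obtained by substituting for $v$ the $m$th point of an enumeration of $[v_1]_E$. For that enumeration a Borel surjective one, e.g.\ from Feldman--Moore, suffices. This removes the decomposition by class size, the passage through the space of structures, and the appeal to isomorphism invariance; otherwise the two proofs are the same.
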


This gives a sufficient condition for a combinatorial construction on countable structures to yield a Borel construction.
It is not necessary however: certain Borel constructions are available ``for free'' on countable Borel equivalence relations, e.g., a Borel linear order, despite not being uniformly definable in $\@L_{\omega_1\omega}$.
In \cite[\S4]{banerjee-chen:strinterp}, an analysis is given of several possible ``bases'' for such non-canonical Borel constructions.
In this paper, we choose to work with the following ``basis'', which is the uniform definability formulation of a well-known lemma of Kechris--Miller \cite[7.3]{kechris-miller:topics}:

\begin{proposition}[see \cite[4.7, 3.27, 3.29]{banerjee-chen:strinterp}]
\label{thm:Lomega1omega-borel-LO+color}
A construction of a model of a theory $\@T'$ from every model of a theory $\@T$ may be carried in the locally countable Borel context (i.e., on every Borel structuring of a countable Borel equivalence relation by models of $\@T$)
iff it may be uniformly defined from a model of $\@T$ \emph{expanded with two pieces of auxiliary structure}: a linear ordering, and a proper $\#N$-coloring of the intersection graph on all finite subsets.
\end{proposition}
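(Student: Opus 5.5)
Since this result is quoted from \cite{banerjee-chen:strinterp}, I would only indicate how it reduces to facts already recalled, treating the two directions separately.

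For the ``if'' direction, the plan is as follows. Let $\@M$ be a Borel structuring of $(X,E)$ by models of $\@T$.
\begin{enumerate}
\item Fix a Borel linear ordering of the standard Borel space $X$; restricted to each $E$-class, it gives the first auxiliary structure.
\item By \cite[7.3]{kechris-miller:topics}, there is a Borel proper $\#N$-coloring of the intersection graph on the standard Borel space $[E]^{<\infty}$ of finite subsets of $E$-classes; this gives the second.
\item Together with $\@M$, these form a Borel structuring of $E$ by models of the expanded theory. The coloring lives on the imaginary sort of finite subsets (\cref{ex:Lomega1omega-finpow}), so it is encoded by countably many relations, one for each (size, color) pair.
\item \cref{thm:Lomega1omega-borel} then turns the uniform definition into a Borel structuring by models of $\@T'$.
\end{enumerate}

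For the ``only if'' direction, I would apply the given Borel construction to a single universal Borel structuring and then extract a formula. The natural candidate lives on the Polish space $\mathrm{Mod}$ of expanded models (model of $\@T$, linear order, coloring) on the underlying set $\#N$. One takes pairs $(\@N, k)$ with $k \in \#N$ a distinguished point, and identifies pairs that are carried to each other by a permutation of $\#N$ (the logic action). Each class then carries the structure $\@N$ transported to it.

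Applying the construction here yields, Borel-in-$\@N$, a model of $\@T'$ on $\#N$. Invariance under the logic action holds by construction, since the output is defined classwise. The resulting $S_\infty$-invariant Borel subsets of $\mathrm{Mod} \times \#N^m$ are then $\@L_{\omega_1\omega}$-definable by the Lopez-Escobar theorem, which gives the uniform formula.

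The main obstacle is that this universal relation need not be countable: expanded models with nontrivial automorphisms (e.g.\ a dense linear order) give uncountable classes. Hence the construction, which is only assumed to work on \emph{countable} Borel equivalence relations, does not directly apply. My first attempt would be a suitable $S_\infty$-invariant Borel subspace of $\mathrm{Mod}$. I would restrict to expanded models in which the coloring of finite sets has been refined, e.g.\ lexicographically together with the order, so as to rigidify every model. On such a subspace all classes are countable, and invariance of the output is automatic.

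One then must check two things. First, rigidification loses no generality: every expanded model of $\@T$ should yield, definably, such a rigidified one, using \cref{thm:Lomega1omega-LO+color-finquot}-style lexicographic constructions. Second, Lopez-Escobar still applies when relativized to this invariant Borel subspace. Carrying this out carefully is exactly the analysis of ``bases'' in \cite[\S4]{banerjee-chen:strinterp}, and I expect the rigidification step to be the delicate part.
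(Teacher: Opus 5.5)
Your ``if'' direction is fine, and it is exactly how the paper uses this proposition: a Borel linear order, \cite[7.3]{kechris-miller:topics}, then \cref{thm:Lomega1omega-borel}. The paper itself gives no proof and only cites \cite{banerjee-chen:strinterp}.

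In the ``only if'' direction your overall strategy is the right one: build a structuring on the space of pointed isomorphism types, where invariance is automatic, then apply Lopez-Escobar. But the step that makes it legitimate is missing, and the obstacle you name is not the real one.

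The hypothesis only applies to a countable Borel equivalence relation on a \emph{standard Borel} space. So you must show two things: that the quotient of $\mathrm{Mod}\times\#N$ by pointed isomorphism is standard Borel (i.e.\ pointed isomorphism has a Borel complete invariant with Borel image), and that ``same underlying model'' is a countable Borel equivalence relation on it. Rigidity does not give this. A marked point together with a disjoint, unrelated rigid structure has pointed isomorphism as complicated as isomorphism of that structure, which is typically not smooth.

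The issue you worry about never arises. In the quotient, the class of $[\@N,k]$ is $\#N/\mathrm{Aut}(\@N)$, which is always countable. Moreover, every model expanded by $<$ and a proper coloring $c$ is already rigid. If $\sigma x\ne x$ for an automorphism $\sigma$, then $\{x,\sigma x\}$ and $\{\sigma x,\sigma^2x\}$ meet and have the same color, hence coincide. So $\sigma^2x=x$, which is impossible for an order automorphism. Your rigidification step is therefore vacuous and does not address the gap.

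The missing idea is that the coloring makes each point canonically encode its pointed isomorphism type. The $2$-element sets of color $i$ form a matching. The corresponding involutions $g_i$ give a uniformly definable \emph{transitive} action of $F_\infty=\langle a_0,a_1,\dots\rangle$ on every model. Hence the map $(\@N,k)\mapsto\bigl(R^{\@N}(\gamma_1k,\dots,\gamma_mk)\bigr)_{R,\vec\gamma}$ is a Borel complete invariant for pointed isomorphism. Here $R$ ranges over all symbols including equality, and $\vec\gamma$ over tuples from $F_\infty$. Then:
\begin{itemize}
\item its image is Borel, being cut out by the axioms together with the requirement that $a_i$ act as $g_i$;
\item the relation ``same underlying model'' becomes the orbit relation of the shift action of $F_\infty$ on this image;
\item by rigidity, each orbit is in canonical bijection with $\@N$, so it carries the transported structure in a Borel way.
\end{itemize}
Only with this in hand does your argument go through. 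The Borel output pulls back along the invariant to $S_\infty$-invariant Borel subsets of $\mathrm{Mod}\times\#N^m$, to which Lopez-Escobar applies. This, rather than rigidification, is what the two auxiliary structures are for.
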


(Formally, the $\#N$-coloring of finite subsets here should be treated as a sequence of definable subsets of an imaginary sort, namely the sort of finite subsets; see \cref{ex:Lomega1omega-finpow} below.)

This result licenses the use of well-known results in Borel combinatorics in the context of uniform definability on countable structures.
For instance, we have the definable version of \cite[3.3(iii)]{JKL:cber},
which is also essentially Schreier's proof of the Nielsen--Schreier theorem
(see \cite[3.8]{lyndon-schupp:combgrp}):

\begin{proposition}[see \cite[3.3]{JKL:cber}]
\label{thm:nielsen-schreier}
Let $X$ be a set, $T \subseteq X^2$ be a tree, and $E \subseteq X^2$ be an equivalence relation (with no connection to $T$).
Suppose we are also given a proper $\#N$-coloring of the intersection graph on finite subsets of $X$ (in fact, a coloring of the common-source graph on edges of $T$ is enough).
From this input structure, we may uniformly define a forest $F \subseteq X^2$ generating $E$.
\end{proposition}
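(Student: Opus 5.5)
We will prove the statement by the classical Voronoi (Schreier) argument, carried out definably from the given coloring. Let $C_1, C_2, \dotsc$ range over the $E$-classes. We may assume $E$ only relates points of the same $T$-component; otherwise we first replace $E$ by $E \cap \#E_T$ on each $T$-component, and treat points in different components as unrelated. Since $T$ is a tree, distances $d_T$ are uniformly definable (\cref{ex:Lomega1omega-graph-metric}). The idea is to build, inside each $E$-class $C$, a spanning tree by contracting $T$ along Voronoi cells of $C$ and choosing one connecting edge per adjacency between cells.

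\textbf{Step 1: Voronoi cells.} Fix an $E$-class $C$. For each vertex $v \in X$, consider the set $N_C(v)$ of points of $C$ at minimal $T$-distance from $v$. This set is finite and nonempty, because $T$ is a tree and $C$ is contained in one component. We need to choose a single point of $N_C(v)$ definably. The provided coloring of finite subsets does not directly select an element of a finite set, so instead we proceed edge-by-edge, which is also why the statement says a coloring of the common-source graph on edges of $T$ suffices.

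Root each vertex $v$ toward $C$: among the edges $(v,w) \in T$ with $d_T(w,C) < d_T(v,C)$, pick the one whose singleton edge has least color. Distinct edges out of $v$ share the source $v$, so they receive distinct colors, and the choice is unique. Following these pointers from $v$ strictly decreases $d_T(\cdot,C)$, so it terminates at a unique point $c_C(v) \in C$. This defines a retraction $c_C : X \to C$ whose fibers, the cells, are $T$-connected subtrees.

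\textbf{Step 2: the contracted tree.} Contracting each cell of $c_C$ to a point turns $T$ into a tree $T/C$ on the vertex set $C$. Its edges are the pairs $\set{c_C(u), c_C(u')}$ for $T$-edges $(u,u')$ with $c_C(u) \ne c_C(u')$. Each such adjacency arises from exactly one $T$-edge, since cells are subtrees and $T$ is acyclic. Contracting connected subtrees of a tree yields a tree, so the union $F$ of these trees over all $E$-classes is an acyclic graph. Its components are exactly the $E$-classes; here we use that $T$ connects all of $C$. Everything so far is defined by $\@L_{\omega_1\omega}$ formulas uniform in $C$: we quantify over $y$ with $y \mathrel{E} x$, with countable disjunctions over distances and path lengths.

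The main obstacle is making the selection in Step 1 definable without a linear order. The common-source coloring handles exactly this, so the remaining verifications are routine. These are that the pointer chains terminate, and that a contraction of a tree along connected fibers is a tree.
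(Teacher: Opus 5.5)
Your proposal is correct and is essentially the intended argument: it is the Voronoi-cell (Schreier) contraction behind \cite[3.3]{JKL:cber}, with ties broken edgewise. From each $v \notin C$ you follow the least-colored $T$-edge that decreases $d_T(\cdot, C)$, which is exactly why a coloring of the common-source graph on edges of $T$ suffices. One harmless slip: since $T$ is not assumed locally finite, $N_C(v)$ and the set of candidate edges at $v$ may be infinite, but you never use finiteness. The least color exists because the natural numbers are well-ordered, and it is attained by a unique candidate because edges with a common source receive distinct colors.
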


The following is likewise the uniform definability formulation of \cite[4.5]{KST:chromatic}:

\begin{proposition}[Kechris--Solecki--Todorcevic; see \cite[4.27]{banerjee-chen:strinterp}]
\label{thm:KST-lfgraph-color}
Let $(X,G)$ be a locally finite graph.
Suppose we are also given a linear ordering, as well as a proper $\#N$-coloring of the intersection graph on finite subsets, of $X$.
From this input, we may uniformly define a proper $\#N$-coloring of $G$.
\end{proposition}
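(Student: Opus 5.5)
We color each vertex by combining the given coloring of finite sets with the linear order. For $x \in X$ let $B(x) := \set{x} \cup \set{y | x \mathrel{G} y}$ be the closed neighborhood of $x$. This is a finite subset of $X$ because $G$ is locally finite. Let $c$ be the given proper $\#N$-coloring of the intersection graph on finite subsets of $X$, and let $<$ be the given linear ordering. Define
\[
\chi(x) := \paren[\big]{c(B(x)),\ \abs{\set{z \in B(x) | z < x}}} \in \#N \times \#N,
\]
that is, the color of the finite set $B(x)$ together with the rank of $x$ inside $B(x)$ under $<$. Composing with a fixed bijection $\#N \times \#N \cong \#N$ turns this into an $\#N$-coloring.

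To check that $\chi$ is proper, let $x \mathrel{G} y$. Then $x, y \in B(x) \cap B(y)$, so the two finite sets intersect, and we split into two cases.
\begin{enumerate}
\item If $B(x) \ne B(y)$, they are distinct adjacent vertices of the intersection graph, so $c(B(x)) \ne c(B(y))$.
\item If $B(x) = B(y)$, then $x \ne y$ are two distinct elements of the same finite linearly ordered set, so their ranks differ.
\end{enumerate}
In both cases $\chi(x) \ne \chi(y)$.

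For uniform definability, we express ``$\chi(x) = (k, r)$'' by an $\@L_{\omega_1\omega}$ formula. We take a countable disjunction over $m \in \#N$ of the following assertion: there exist distinct $y_1, \dotsc, y_m$ which are exactly the $G$-neighbors of $x$, the finite set $\set{x, y_1, \dotsc, y_m}$ lies in the $k$th color class, and exactly $r$ elements of this set are $<$-below $x$. The $k$th color class is a definable subset of the imaginary sort of finite subsets (\cref{ex:Lomega1omega-finpow}). The clause ``exactly the neighbors'' is first-order: it says $\forall z\,(z \mathrel{G} x \leftrightarrow \bigvee_i z = y_i)$. Each predicate ``$\chi(x) = n$'' is then a countable disjunction of such formulas.

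The only delicate point is the case where adjacent vertices have equal closed neighborhoods, since then the set coloring alone cannot separate them; the rank coordinate from the linear order is exactly what handles it.
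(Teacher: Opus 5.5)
Your proof is correct. For adjacent $x,y$, both closed neighborhoods contain $x$, so either $B(x)\neq B(y)$ and the given coloring separates them, or $B(x)=B(y)$ and the $<$-rank does. Your formula also meets the uniform definability requirement: it is a disjunction over the degree $m$, using the $(m+1)$-ary formula defining the $k$th color class on the finite-subset sort, and the tuple is injective by irreflexivity.

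The paper gives no proof of its own here; it only cites Kechris--Solecki--Todorcevic via Banerjee--Chen. The original Kechris--Solecki--Todorcevic argument runs differently. It fixes a countable family of Borel sets $U_n$, closed under finite intersections and separating points, and colors $x$ by the least $n$ with $x\in U_n$ and no neighbor of $x$ in $U_n$. No such family is part of the auxiliary input in the paper's framework. To run that argument you would first have to define one from $<$ and the finite-set coloring $c$. This is possible: for instance, $U_n=\{z : \exists w>z,\ c(\{z,w\})=n\}$ separates points, and one then closes under finite intersections.

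Your route skips this detour by letting $c$, applied to closed neighborhoods, play the role of the separating family directly. It also makes transparent what each auxiliary input contributes. The coloring $c$ handles all adjacent pairs except those with equal closed neighborhoods, and the linear order handles the rest. That second ingredient is genuinely needed. Take an isolated edge $\{x,y\}$ and give $\{x\}$ and $\{y\}$ the same color: this is a proper coloring of the finite subsets invariant under swapping $x$ and $y$. Hence no proper coloring of $G$ is definable from $c$ alone.
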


In more involved multi-step combinatorial constructions,
such as those appearing in \cref{thm:tree-link-subalgebra,thm:cptt-union},
it is necessary to allow operations that change the underlying set.
In order to formalize ``uniformly definable'' such constructions,
we need the following generalization of formulas:

\begin{definition}
\label{def:Lomega1omega-imag}
Let $\@T$ be an $\@L_{\omega_1\omega}$ theory.
An \defn{imaginary sort (in $\@L_{\omega_1\omega}$) over $\@T$} is a \emph{formal} definable quotient of a countable disjoint union of definable relations in models of $\@T$.
That is, it is a syntactic expression of the form
\begin{eqaligned*}
\Phi = \paren[\Big]{\bigsqcup_{i \in I} \phi_i} \Big/ \paren[\Big]{\bigsqcup_{i, j \in I} \epsilon_{i,j}},
\end{eqaligned*}
where $I$ is a countable index set,
each $\phi_i$ is a formula in some number $n_i$ of variables,
each $\epsilon_{i,j}$ is a formula in $n_i+n_j$ variables,
and the $\epsilon_{i,j}$ define the pieces of an equivalence relation on
$\bigsqcup_{i \in I} \phi_i^\@M \subseteq \bigsqcup_{i \in I} M^{n_i}$
in every model $\@M$ of $\@T$;
the quotient set
\begin{eqaligned*}
\Phi^\@M := \paren[\Big]{\bigsqcup_{i \in I} \phi_i^\@M} \Big/ \paren[\Big]{\bigsqcup_{i, j \in I} \epsilon_{i,j}^\@M}
\end{eqaligned*}
is then called the \defn{interpretation of $\Phi$ in the model $\@M$}.

A \defn{subsort} $\Psi \subseteq \Phi$ is another imaginary sort whose interpretation in every model $\@M$ is a subset $\Psi^\@M \subseteq \Phi^\@M$.
Up to logical equivalence, such $\Psi$ is of the form
$\Psi = (\bigsqcup_i \psi_i)/(\bigsqcup_{i,j}, \epsilon_{i,j}')$
where the $\psi_i$'s define subsets of the $\phi_i$'s such that $\bigsqcup_i \psi_i \subseteq \bigsqcup_i \phi_i$ is invariant under the equivalence relation $\bigsqcup_{i,j} \epsilon_{i,j}$,
and the $\epsilon_{i,j}'$'s are the restrictions of the $\epsilon_{i,j}$'s to this subset.

Extending \cref{def:Lomega1omega-def}, we say that an assignment of relations $R_\@M \subseteq \Phi^\@M$ on an imaginary sort $\Phi$ in models $\@M$ of $\@T$ is \defn{uniformly definable} if it is defined by some subsort $\Psi \subseteq \Phi$.

See \cite[Ch.~5]{hodges:model} for background on imaginaries in finitary first-order logic,
and \cite[\S8--10]{chen:etalejt} for background on imaginaries in $\@L_{\omega_1\omega}$.
\end{definition}

\begin{example}
\label{ex:Lomega1omega-finpow}
For a set $X$, we may treat the finite powerset
$[X]^{<\omega} := \set{A \subseteq X | \abs{A} < \aleph_0}$
as the quotient of the set of injective finite sequences,
which is a definable subset of
$X^{<\omega} = \bigsqcup_{n \in \#N} X^n$,
by the action of the symmetric group $S_n$ for each $n$.
That is, $[X]^{<\omega}$ is the interpretation in $X$ of the imaginary
\begin{eqaligned*}
\Phi_\!{finpow} := \paren[\Big]{\bigsqcup_{n \in \#N} \phi_n} \Big/ \paren[\Big]{\bigsqcup_{m, n \in \#N} \epsilon_{m,n}}
\end{eqaligned*}
(in the empty language $\@L = \emptyset$) where
\begin{eqaligned*}
\phi_n(x_1, \dotsc, x_n) &:= \bigwedge_{1 \le i < j \le n} (x_i \ne x_j),  \\
\epsilon_{m,n}(x_1, \dotsc, x_m, y_1, \dotsc, y_n) &:= \begin{cases*}
    \bot & if $m \ne n$,  \\
    \bigvee_{\sigma \in S_n} \bigwedge_{1 \le i \le n} (x_i = y_{\sigma(i)}) & if $m = n$.
\end{cases*}
\end{eqaligned*}
Thus, a subsort $\Psi \subseteq \Phi_\!{finpow}$ is given by a family of $n$-ary formulas $\psi_n(x_1, \dotsc, x_n)$ for each $n \in \#N$ defining relations that are $S_n$-invariant, hence descend to the quotient.
\end{example}

For instance, the auxiliary structure of an $\#N$-coloring of finite subsets in \cref{thm:Lomega1omega-borel-LO+color} is formally given by countably many definable subsets
$C_0, C_1, \dotsc \subseteq \Phi_\!{finpow}$
(the color classes).

A basic fact used throughout Borel combinatorics is the ability to pick a point in a finite set in a Borel way;
this allows finite-to-one quotients in the Borel context to produce a standard Borel space.
The following is the uniform definability formulation of this technique:

\begin{proposition}
\label{thm:Lomega1omega-LO+color-finquot}
Let $X$ be a set equipped with a linear ordering $<$ and a proper $\#N$-coloring of the intersection graph on finite subsets (as in \cref{thm:Lomega1omega-borel-LO+color}),
and let $\phi_i \subseteq X^{n_i}$ such that each $n_i > 0$ and $\epsilon_{i,j} \subseteq X^{n_i+n_j}$ such that $E := \bigsqcup_{i,j} \epsilon_{i,j}$ is a \emph{finite} equivalence relation on $\bigsqcup_i \phi_i$.
Then on the imaginary sort $\Phi = (\bigsqcup_i \phi_i) / (\bigsqcup_{i,j} \epsilon_{i,j})$, we may also uniformly define a linear ordering and a proper $\#N$-coloring of the intersection graph on finite subsets.
\end{proposition}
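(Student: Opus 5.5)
The plan is to transfer the auxiliary structure from $X$ to the finite-to-one quotient $\Phi$ in two steps. First we move it to the disjoint union $\bigsqcup_i \phi_i$ of tuples. Then we pass to the quotient by choosing, in each finite $E$-class, a canonical representative datum, namely the class itself viewed as a finite set of tuples.

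For the linear ordering on tuples, order $\bigsqcup_i \phi_i$ as follows. First compare the index $i$, using a fixed enumeration of the countable set $I$. Then compare tuples in $X^{n_i}$ lexicographically with respect to $<$. This is clearly a uniformly definable (quantifier-free, up to the countable disjunction over indices) linear order.

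For the linear ordering on $\Phi$, each class $[\bar a]_E$ is a finite set of tuples. Send it to the finite sequence of its elements listed in increasing order. Then compare two classes by the lexicographic (shortlex) order on such finite increasing sequences. This is definable because $E$-classes are finite. A formula of the form ``$[\bar a]_E < [\bar b]_E$'' is a countable disjunction over the possible sizes $k,l$ of the two classes and their index patterns, with existential quantifiers over the class elements and with the $\epsilon_{i,j}$ expressing membership. Since distinct classes are disjoint sets of tuples, the associated sequences differ, so the order is total.

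For the coloring, the idea is to map each finite set $S \subseteq \Phi$ to a finite subset of $X$ while remembering enough combinatorial data. Let $\mathrm{supp}(S)\subseteq X$ be the set of all coordinates of all tuples in all classes in $S$. This set is finite and, since each $n_i>0$, nonempty for $S \ne \emptyset$. Give $S$ the color $(c(\mathrm{supp}(S)), \tau(S))$, where $c$ is the given coloring. The second component $\tau(S)$ is the isomorphism type of $S$ relative to the increasing enumeration $s_1<\dotsb<s_m$ of $\mathrm{supp}(S)$ under $<$. Concretely, $\tau(S)$ records, for each class in $S$ and each tuple in it, its index $i$ and which positions among $1,\dotsc,m$ its coordinates occupy. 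There are countably many such types, so this is an $\#N$-coloring.

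To check properness, suppose $S \ne S'$ intersect and have the same color. If their supports differ, then the supports intersect (they share the support of a common class) and are distinct. So $c$ assigns them different colors, a contradiction. If the supports are equal, then the common increasing enumeration together with $\tau(S)=\tau(S')$ reconstructs the same family of classes, forcing $S=S'$, again a contradiction. For definability, each color class is given by countably many formulas on the imaginary sort of finite subsets of $\Phi$. Such a subset is represented by finite tuples of representatives. Each formula existentially quantifies over the finitely many elements of the classes (finiteness is used again), and asserts the support pattern and the coloring condition on the resulting finite subset of $X$.

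The main obstacle is bookkeeping rather than mathematics. One must verify that these descriptions really descend to the imaginary sorts (invariance under choice of representatives and permutations), and that they are expressible uniformly. The finiteness of $E$-classes is exactly what lets ``the whole class'' be quantified as a finite tuple, and this is the step I would write out most carefully.
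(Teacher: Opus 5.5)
Your proposal is correct and is essentially the paper's proof. Both order tuples by index and then lexicographically, order $E$-classes via their elements (the paper simply compares least elements), and color a finite $S \subseteq \Phi$ by the pair consisting of the given color of its support $\mathrm{supp}(S) \subseteq X$ and a tiebreaker among sets with the same support, with $n_i > 0$ guaranteeing that intersecting $S, S'$ have intersecting supports.

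The only difference is the tiebreaker. The paper uses the position of $S$ in the lexicographic ordering of all finite $S' \subseteq \Phi$ with $\mathrm{supp}(S') = \mathrm{supp}(S)$, and asserts that this set is finite. Your type $\tau(S)$, taken relative to the increasing enumeration of the support, still works when $I$ is infinite and infinitely many such $S'$ exist (e.g.\ $n_i = 1$ and $\phi_i = X$ for every $i$ in a countably infinite $I$), a case where the paper's finiteness claim fails unless the indices are also fixed.
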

\begin{proof}
Using the provided $<$, we may lexicographically order each $X^{n_i}$;
then we may order $\bigsqcup_i \phi_i$ by taking the ordered sum of the $\phi_i \subseteq X^{n_i}$.
Since $E$ has finite classes, we may order $\Phi = (\bigsqcup_i \phi_i)/E$ according to the least elements in each class.
Given a finite subset $F \subseteq \Phi$, we may color it $\ang{m, k}$ where
$\ang{\blank,\blank} : \#N^2 -> \#N$ is some fixed bijection,
$m$ is the color assigned to the finite subset $\norm{F} \subseteq X$ consisting of all coordinates of all tuples in all $E$-classes in $F$,
and $k$ is the index of $F$ in the lexicographical ordering on the finite set of all finite subsets $F' \subseteq \Phi$ with $\norm{F'} = \norm{F}$.
\end{proof}

% remove MR number from references
\def\MR#1{}
\bibliographystyle{amsalpha}
\bibliography{refs}

\parindent=0pt\relax
\end{document}